\DeclareMathOperator*{\osc}{osc}
\DeclareFontFamily{OT1}{eusb}{} \DeclareFontShape{OT1}{eusb}{m}{n} {<5> <6> <7> <8> <9> <10> <11> <12> <14.4> eusb10}{}
\DeclareMathAlphabet{\eusb}{OT1}{eusb}{m}{n}
\DeclareFontFamily{OT1}{eusm}{} \DeclareFontShape{OT1}{eusm}{m}{n} {<5> <6> <7> <8> <9> <10> <11> <12> <14.4> eusm10}{}
\DeclareMathAlphabet{\eusm}{OT1}{eusm}{m}{n}
\DeclareFontFamily{OT1}{eufm}{} \DeclareFontShape{OT1}{eufm}{m}{n} {<5> <6> <7> <8> <9> <10> <11> <12> <14.4> eufm10}{}
\DeclareMathAlphabet{\mathfrak}{OT1}{eufm}{m}{n}
\DeclareFontFamily{OT1}{fraktura}{}
\DeclareFontShape{OT1}{fraktura}{m}{n} {<5> <6> <7> <8> <9> <10> <11> <12> <13> <14.4> [1.1] eufm10}{}
\DeclareMathAlphabet{\fraktura}{OT1}{fraktura}{m}{n}
\DeclareFontFamily{OT1}{cmfi}{} \DeclareFontShape{OT1}{cmfi}{m}{n} {<5> <6> <7> <8> <9> <10> <11> <12> <13> <14.4> [0.9] cmfi10}{}
\DeclareMathAlphabet{\cmfi}{OT1}{cmfi}{b}{n}
\DeclareFontFamily{OT1}{cmss}{} \DeclareFontShape{OT1}{cmss}{m}{n} {<5> <6> <7> <8> <9> <10> <11> <12> <13> <14.4> cmss10}{}
\DeclareMathAlphabet{\cmss}{OT1}{cmss}{m}{n}
\DeclareMathAlphabet{\mathpzc}{OT1}{pzc}{m}{it}
\newtheoremstyle{thm}{1.8ex}{1.8ex}{\itshape\rmfamily}{} {\bfseries\rmfamily}{}{2ex}{}
\newtheoremstyle{def}{1.8ex}{1.8ex}{\slshape\rmfamily}{} {\bfseries\rmfamily}{}{2ex}{}
\newtheoremstyle{rem}{1.8ex}{1.8ex}{\rmfamily}{} {\bfseries\rmfamily}{}{2ex}{}
\newenvironment{proofsect}[1] {\vspace{0.2cm}\noindent{\rmfamily\itshape#1.}}{\qed\vspace{0.15cm}}
\theoremstyle{thm}
\newcommand\cour[1]{{\fontfamily{pcr}\selectfont #1}}
\theoremstyle{thm}
\newtheorem{theorem}{Theorem}[section]
\newtheorem{lemma}[theorem]{Lemma}
\newtheorem{proposition}[theorem]{Proposition}
\newtheorem*{Main Theorem}{Main Theorem.}
\newtheorem{corollary}[theorem]{Corollary}
\newtheorem*{special theorem}{Lindeberg-Feller Theorem for Martingales}
\newtheorem{assump}[theorem]{Assumption}
\theoremstyle{def}
\theoremstyle{rem}
\newtheorem{remark}[theorem]{Remark}
\newtheorem{remarks}[theorem]{Remarks}
\numberwithin{equation}{section}
\renewcommand{\section}{\secdef\sct\sect}
\newcommand{\sct}[2][default]{%
\refstepcounter{section}
\addcontentsline{toc}{section}{{\tocsection {}{\thesection}{\!\!\!\!#1\dotfill}}{}}
\vspace{0.7cm}
\centerline{\scshape\thesection.\ #1} \nopagebreak \vspace{0.2cm}}
\newcommand{\sect}[1]{%
\vspace{0.4cm} \centerline{\large\scshape\rmfamily #1}
\vspace{0.2cm}}
\renewcommand{\subsection}{\secdef\subsct\sbsect}
\newcommand{\subsct}[2][default]{\refstepcounter{subsection}
\addcontentsline{toc}{subsection}
{{\tocsection{\!\!}{\hspace{1.2em}\thesubsection}{\!\!\!\!#1\dotfill}}{}}
\nopagebreak\vspace{0.45\baselineskip} {\flushleft\bf
\thesubsection~\bf #1.~}
\\*[3mm]\noindent
\nopagebreak}
\newcommand{\sbsect}[1]{\vspace{0.1cm}\noindent
\textbf{#1.~}\vspace{0.1cm}}
\renewcommand{\subsubsection}{%
\secdef \subsubsect\sbsbsect}
\newcommand{\subsubsect}[2][default]{%
\refstepcounter{subsubsection} 
\addcontentsline{toc}{subsubsection}{{\tocsection{\!\!}
{\hspace{3.05em}\thesubsubsection}{\!\!\!\!#1\dotfill}}{}}
\nopagebreak
\vspace{0.15\baselineskip} \nopagebreak {\flushleft\rmfamily
\itshape\thesubsubsection
\ \rmfamily #1\/.}\ }
\newcommand{\sbsbsect}[1]{\vspace{0.1cm}\noindent
\rmfamily \itshape
\arabic{section}.\arabic{subsection}.\arabic{subsubsection} \
\sffamily #1\/.\ }
\renewcommand{\caption}[1]{%
\vglue0.5cm
\refstepcounter{figure}
\begin{minipage}{0.9\textwidth}\small {\sc Figure~\thefigure. }#1\end{minipage}}
\def\myffrac#1#2 in #3{\raise 2.6pt\hbox{$#3 #1$}\mkern-1.5mu\raise 0.8pt\hbox{$#3/$}\mkern-1.1mu\lower 1.5pt\hbox{$#3 #2$}}
\definecolor{lightgray}{gray}{0.5}
\newcommand{\twocite}[2]{\cite{#1}--\cite{#2}}
\newcommand{\N}        {\mathbb N}
\newcommand{\R}        {\mathbb R}
\newcommand{\RR}        {\cmss R}
\newcommand{\B}        {\mathbb B}
\newcommand{\PP}        {\mathbb P}
\newcommand{\p}        {\cmss P}
\newcommand{\Z}        {\mathbb Z}
\newcommand{\E}        {\mathbb E}
\newcommand{\CC}          {\mathcal{C}}
\newcommand{\scrC}      {\mathscr{C}} 
\newcommand{\scrH}      {\mathscr{H}} 
\newcommand{\scrE}      {\mathscr{E}} 
\newcommand{\EE}          {\mathcal{E}}
\newcommand{\GG}          {\mathcal{G}} 
\newcommand{\G}          {\mathbb{G}} 
\newcommand{\LL}         {\mathcal{L}}
\newcommand{\LLL}         {\mathbb{L}}
\newcommand{\MM}       {\mathcal{M}}
\newcommand{\eps}    {\varepsilon}
 \newcommand{\tworef}[2]{\ref{#1} -- \ref{#2}}
\newcommand{\treeeqref}[3]{\eqref{#1}--\eqref{#2}--\eqref{#3}}
\newcommand{\diam}      {\hbox{{\rm diam}}}
\newcommand{\1}{\mathds 1}
\newcommand{\deq}{\mathrel{\mathop:}=}
\newcommand{\twopoints}{\mathrel{\mathop:}}
\newcommand{\cc}{{\text{\rm c}}}
\title[Harnack inequalities and Local-CLT]{Harnack inequalities and local central limit theorem 
for the polynomial lower tail \\ Random Conductance Model}
\author[O.~Boukhadra T.~Kumagai and P.~Mathieu]
{O.~Boukhadra$^1$, T.~Kumagai$^2$ \and\ P.~Mathieu$^3$
}
\begin{document} 

\maketitle
\vspace{-5mm}
\centerline{\sc Dedicated to the memory of Professor Kiyosi It\^o.}
\vspace{0.3cm}
\centerline{$^1$D\'epartement de Math\'ematiques, Universit\'e de Constantine 1} 
\centerline{\small\cour{\url{boukhadra@umc.edu.dz}}}
\smallskip
\centerline{$^2$Research Institute for Mathematical Sciences, Kyoto University,}
 \centerline{Kyoto 606-8502, Japan}
\centerline{\small\cour{\url{kumagai@kurims.kyoto-u.ac.jp}}}
\smallskip
\centerline{$^3$Centre de Math\'ematiques et Informatique} 
\centerline{Aix Marseille Universit\'e, CNRS, Centrale Marseille, I2M, UMR 7373,}
\centerline{13453 Marseille France}
\centerline{\small\cour{\url{pierre.mathieu@univ-amu.fr}}}
\smallskip

\vspace{-2mm}
\begin{abstract} 
\it{We prove upper bounds on the transition probabilities of random walks with i.i.d. random conductances with a polynomial lower tail near $0$. 
We consider both constant and variable speed models. Our estimates are sharp. 
As a consequence, we derive local central limit theorems, parabolic Harnack inequalities and Gaussian bounds for the heat kernel. 
Some of the arguments are robust and applicable for random walks on  general graphs. Such results are stated under a general setting.
}
\newline 
\noindent
{\textbf{keywords}}~:
Markov chains, Random walk,  Random environments,
Random conductances,  Percolation.
\newline
\noindent
{\textbf{AMS 2000 Subject Classification}}~:
60G50; 60J10; 60K37.
\end{abstract}

\section{\textbf{Introduction and Results}}

The work presented below mainly concerns the 
Random Conductance Model (RCM) 
with i.i.d. conductances that have polynomial lower tails at zero. 
We shall obtain various heat kernel bounds, Harnack inequalities and a local central-limit theorem for such models under sharp conditions on the fatness of the tail 
of the conductances near $0$. Some of our arguments exploit specific features of the model - mainly some geometric information on the field of conductances and 
its spectral implications - while other arguments are general properties of random walks on graphs. 
In the rest of this introduction, 
we will separate results that are more robust from those that are specific to the RCM. 
The robust results will be discussed in the first subsection below, and 
results specific to the RCM 
and references to the existing literature 
will be given in the second subsection. 
Readers who are interested in RCM may start reading this paper from the  second subsection. 

Notation: We use $c$ or $C$ as generic positive constants.

\subsection{Part I: Framework and the results}
In this subsection, we give 
sufficient conditions for 
various heat kernel bounds, Harnack inequalities and a local central-limit theorem on a general graph. The results will be used in the next subsection for a concrete RCM.

Let $(G,\pi)$ be a weighted graph. 
That is, $G$ is a countable set and 
$\omega_{xy}=\omega_{yx}\ge 0$ for each $x,y\in G$. We write $x\sim y$ if and only if $\omega_{xy}>0$. 
We assume $(G,\pi)$ is connected and it has bounded degree (i.e. there exists $M>0$ such that $|\{y\in G: \omega_{xy}>0\}|\le M$ for each $x\in G$). 
For $x\ne y$, $\ell (x,y)=\{x_0, x_1,\cdots, x_m\}$ is called a path from from $x$ to $y$
if $x=x_0, x_1,\cdots, x_m=y$ and $x_i\sim x_{i+1}$ for $i=0,\cdots, m-1$. Write $|\ell (x,y)|=m$. 
Define the graph distance by
$d(x,y)=\min\{|\ell(x,y)|: \ell(x,y)\in {\mathcal P}(x,y)\}
$ where ${\mathcal P}(x,y)$ is the set of paths from $x$ to $y$. We define $d(x,x)=0$
for $x\in G$. 
Write $B(x,R) := \{x\in G~: d(x,y) < R\}$ and $\bar B(x,R) := \{x\in G: d(x,y) \le R\}$. 
For $A\subset G$, define $\pi(A)=\sum_{x\in A}\pi(x)$ where  $\pi(x)=\sum_{y\sim x}\omega_{xy}$,
and $\nu(A)=\sum_{x\in A}\nu_x$ where $\nu_x\equiv 1$. 

We will consider VSRW (variable speed random walk) and CSRW (constant speed random walk) that correspond to $(G,\pi)$. Both are continuous time Markov chains whose transition probability from $x$ to $y$ is given by $\omega_{xy}/\pi(x)$. The holding time at $x$ is exponentially distributed with mean $\pi(x)^{-1}$ for VSRW and with mean $1$ for CSRW.  
The corresponding discrete Laplace operator and heat kernel can be written as 
 \[{\mathcal L}_\theta f(x)=\frac 1{\theta_x}\sum_y(f(y)-f(x))\omega_{xy},
 ~~~p_t^{(\theta)}(x,y)=P^x(X^{(\theta)}_t=y)/\theta_y,\]
where $\theta_x = \theta (x) = \pi(x)$ for CSRW and $\theta_x = 1$ for VSRW. Thus the notation ${\mathcal L}_\pi$ and $X^{(\pi)}$ correspond to CSRW and ${\mathcal L}_\nu$, $X^{(\nu)}$ correspond to VSRW. We may and will often remove the script when results are valid for both types of random walks.

Let $\tilde d(\cdot,\cdot)$ be a metric defined by
\[
\tilde d (x,y)=\min\{\sum_{i=0}^{m-1} (1\wedge \omega_{x_ix_{i+1}}^{-1/2}):
\ell (x,y)=\{x_0, x_1,\cdots, x_m\}\in {\mathcal P}(x,y)\}. 
\]
Note that by definition, it is clear that $\tilde d(x,y)\le d(x,y)$ for all $x,y\in G$.
Write $\tilde B(x,R) := \{x\in\Z^d~: \tilde d(x,y) < R\}$. 
For $A\subset G$, let $\tau_A=\inf \{t\ge 0: X_t\notin A\}$.

\medskip 

In the following, we fix $\theta$ (which is either $\pi$ or $\nu$) and
consider either CSRW or VSRW. 
\begin{assump}\label{asmp:1-1asp} Let $x_0\in G$ be a distinguished point.\\
(i) There exist $\delta> 0, c_1>0$ and $T_0(x_0)\in [1,\infty)$ such that 
\begin{equation}\label{HKonup**}
p_t(x,y)\le c_1 t^{-d/2}\qquad \forall x,y\in B(x_0, t^{(1+\delta)/2}),~ t\ge T_0(x_0).\end{equation}
(ii) There exist $\delta> 0, c_2>0$ and $R_0(x_0)\in [1,\infty)$ such that the following hold:

(CSRW case: $\theta=\pi$)  $c_2r^2\le E^{x}[\tau_{B(x,r)}]$ 
for all $x\in B(x_0,r^{1+\delta})$ and all $r\ge R_0(x_0)$.

(VSRW case: $\theta=\nu$) $c_2r^2\le E^{x}[\tau_{\tilde B(x,r)}]$ 
for all $x\in \tilde B(x_0,r^{1+\delta})$ and all $r\ge \tilde R_0(x_0)$.
\\
(iii) There exist $C_E>0$ and $R_1(x_0)\in [1,\infty)$ such that if $R \ge R_1(x_0)$ and a positive function $h : \overline B(x_0,R) \longrightarrow \R_+$ is harmonic on $B = B(x_0, R)$, then writing $B' = B(x_0, R/2)$,  
\begin{equation}
\tag{H}\label{EH}
\sup_{B'} h \le C_E \inf_{B'} h.
\end{equation}
(iv) Let $\theta$ be as above. There exist $\delta>0$, $c_3,c_4>0$ and $R_2(x_0)\in [1,\infty)$ such that 
\begin{eqnarray*}
c_3R^d\le \theta(B(x_0,R))\le \sup_{x\in B(x_0,R^{1+\delta})}\theta(B(x,R))\le c_4R^d,&&
\mbox{for all }~~R\ge R_2(x_0).
\end{eqnarray*} 
(v) (CSRW case: $\theta=\pi$) There exist $\kappa > 0$ and $R_3(x_0)\in [1,\infty)$ such that 
\[
\min_{x\in B(x_0, R)} \pi(x)\ge R^{- \kappa}\,~\qquad~\mbox{for all  }\,~~R\ge R_3(x_0). 
\]
(VSRW case: $\theta=\nu$) There exist $c_5>0$ and $R_4(x_0)\in [1,\infty)$ such that for any $x\in B(x_0,R)$, $R\ge R_4(x_0)$, 
if $d(x,y)\ge R$ then it holds that
\[
\tilde d(x,y)\ge c_5 d(x,y).
\]
\end{assump}

Under the assumption, we have the following.\vspace{.4cm}

\underline{Heat kernel estimates}\vspace{.1cm}
\begin{proposition}\label{HKupperd}
Assume Assumption \ref{asmp:1-1asp} and let $\eps\in (0,\delta/(1+\delta))$. There exist $c_1,\cdots,c_5>0$ and  
$R_*(x_0)\in [1,\infty)$ such that for $x,y\in G$ and $t>0$, 
if 
\begin{equation}\label{eq:ahoeb1}
c_1(d(x,y)\vee t^{\frac 1{2-\eps}})\ge R_*(x_0),
\end{equation}
and
\begin{equation}\label{eq:ahoeb2}
d(x_0,x)\le c_1(d(x,y)\vee t^{\frac 1{2-\eps}}),
\end{equation}
hold, then 
\begin{eqnarray}
p_t(x,y)&\le & c_2t^{-d/2}\exp\Big(-c_3d(x,y)^2/t\Big)\,~~~\mbox{ for }~~t>d(x,y),\label{eq:ketsur1}\\
p_t(x,y)&\le & c_4\exp\Big(-c_5d(x,y)(1\vee \log (d(x,y)/t))\Big)\,~~~\mbox{ for }~~t\le d(x,y). 
\label{eq:ketsur2}
\end{eqnarray}
\end{proposition}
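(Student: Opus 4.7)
The plan is to derive the off-diagonal bounds from the pointwise on-diagonal estimate (i) by Davies' perturbation method, combined with an iteration of the elliptic Harnack inequality (iii) to propagate the on-diagonal bound throughout a suitably large neighbourhood of $x_0$, and finally an application of (v) to express the resulting Gaussian (or sub-Gaussian) decay in the graph metric $d$.

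Fix $x,y$ satisfying \eqref{eq:ahoeb1}--\eqref{eq:ahoeb2} and set $R = d(x,y) \vee t^{1/(2-\eps)}$. The idea is to localize all estimates to a ball $B(x_0, c R)$ for a sufficiently large constant $c$. First I would use (i), the Harnack inequality (iii) iterated on dyadic scales, and the volume bound (iv) to obtain an on-diagonal estimate $p_s(z,z') \le c' s^{-d/2}$ uniform over $z,z' \in B(x_0, cR)$ and $s \ge c'' R^{2-\eps}$. This plays the role of the $L^1\to L^\infty$ ultracontractivity estimate in the Davies argument. Second, I would carry out Davies' perturbation: for a function $\psi$ Lipschitz with respect to the metric natural to the Dirichlet form $\EE(f,f) = \tfrac12 \sum_{z \sim z'}\omega_{zz'}(f(z)-f(z'))^2$ --- the graph metric $d$ for the CSRW, $\tilde d$ for the VSRW --- the semigroup $P_t^\psi = e^{-\psi}P_t e^{\psi}$ satisfies
\[
\|P_t^\psi\|_{L^2\to L^2} \le \exp\bigl(t\Lambda(\psi)/2\bigr),\qquad \Lambda(\psi) = \sup_z \theta_z^{-1}\sum_{z'\sim z}\omega_{zz'}\bigl(e^{\psi(z')-\psi(z)}-1\bigr)^2.
\]
Combining with the on-diagonal bound of the previous paragraph via $P_t = P_{t/2}\circ P_{t/2}$ yields
\[
p_t(x,y) \le c\, t^{-d/2}\, \exp\bigl(\psi(x)-\psi(y) + t\Lambda(\psi)/2\bigr).
\]
Taking $\psi(\cdot) = \lambda\bigl(d(x,\cdot)\wedge d(x,y)\bigr)$ (resp. with $\tilde d$) and optimizing $\lambda \ge 0$ yields the two regimes: for $t > d(x,y)$, $\Lambda(\lambda) \le c\lambda^2$ and the optimum $\lambda \sim d(x,y)/t$ produces the Gaussian \eqref{eq:ketsur1}; for $t \le d(x,y)$, $\Lambda(\lambda) \sim e^{2\lambda}$ dominates, and $\lambda \sim \log(d(x,y)/t)$ produces the Poisson-type bound \eqref{eq:ketsur2}.

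The final step is to use assumption (v) to match the statement. In the VSRW case, the Davies bound is naturally phrased in $\tilde d$, and (v) gives $\tilde d(x,y)\ge c_5 d(x,y)$ whenever $d(x,y)\ge R_4(x_0)$, which converts the estimate to the claimed rate in $d$. In the CSRW case, the factor $p_t(x,y) = P^x(X_t=y)/\pi(y)$ carries an extra $\pi(y)^{-1}$, which by (v) is at most $R^\kappa$; the restriction $\eps<\delta/(1+\delta)$ ensures that the $R^\kappa = (d(x,y)\vee t^{1/(2-\eps)})^\kappa$ loss can be absorbed into the $t^{-d/2}$ prefactor (with modified constants) without altering the Gaussian shape. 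The main obstacle will be the precise estimate of $\Lambda(\psi)$ in the CSRW case: weak conductances inflate $\theta_z^{-1}$ and a uniform control depends delicately on (v), while the $\eps$-losses accumulated through the interpolation step must be tracked carefully. In addition, one must verify via the mean exit time bound (ii) that paths leaving $B(x_0, cR)$ contribute negligibly to $p_t(x,y)$, a standard but slightly technical localization step that converts the pointwise on-diagonal bound into an honest $L^1\to L^\infty$ estimate on the restricted domain.
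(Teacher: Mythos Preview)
Your overall route---Davies perturbation fed by an on-diagonal bound---is a legitimate alternative (the paper itself acknowledges the Grigor'yan/Folz/Chen approach in the remark following the proof), but it differs from what the paper actually does and has a gap as written.

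The paper does not use the Harnack inequality (iii) here at all. Assumption (i) already delivers $p_s(z,z')\le c\,s^{-d/2}$ uniformly for $z,z'\in B(x_0,s^{(1+\delta)/2})$, so the Harnack iteration you propose is unnecessary. Instead the paper splits according to whether $D^{2-\eps}\ge t$ or $D^{2-\eps}<t$, where $D=d(x,y)$. In the long-time case $D^{2-\eps}<t$ it does \emph{not} run Davies' method but uses a probabilistic chaining: one decomposes $p_t(x,y)$ via the first exit from $B(x,\rho)$ with $\rho\sim D/2$, bounds $P^x(\tau_{B(x,\rho)}\le t/2)\le c\exp(-cD^2/t)$ by iterating the mean exit-time estimates derived from (i),(ii),(iv) (this is where Assumption (ii) enters, centrally rather than as a localization afterthought), and then applies the on-diagonal bound (i) at the exit point. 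In the short-time case $D^{2-\eps}\ge t$ the paper invokes the universal Davies-type bounds of Folz,
\[
p_t(x,y)\le \frac{c}{\sqrt{\theta_x\theta_y}}\exp\bigl(-c\,d_\theta(x,y)^2/t\bigr)
\]
(and the Poisson analogue), which need no on-diagonal input whatsoever, and absorbs the $(\theta_x\theta_y)^{-1/2}$ prefactor via Assumption (v).

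Your sketch has a genuine gap in this short-time regime. When $t\le d(x,y)$, condition \eqref{eq:ahoeb1} forces $D$ large but places no lower bound on $t$; thus $t$ may be well below $T_0(x_0)$ and the on-diagonal bound from (i) is unavailable at time $\sim t$. The ultracontractivity prefactor $t^{-d/2}$ in your Davies bound is therefore unjustified there. The fix is exactly the paper's move: use the bare Davies bound with $1/\sqrt{\theta_x\theta_y}$ in place of $t^{-d/2}$, invoke (v) to control this factor polynomially in $D$, and absorb the polynomial loss into the exponential since $D^\kappa t^{d/2}\le c\exp(cD^\eps)\le c\exp(cD^2/t)$ when $D^{2-\eps}\ge t$. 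Incidentally, your concern about $\Lambda(\psi)$ for the CSRW is misplaced: because $\theta_z=\pi(z)=\sum_{z'}\omega_{zz'}$, the ratios $\omega_{zz'}/\theta_z$ sum to $1$ and $\Lambda(\lambda d)\le(e^\lambda-1)^2$ irrespective of how small the conductances are; the weak-conductance problem shows up only in the prefactor $1/\pi(y)$, which is precisely what (v) handles.
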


\begin{corollary}\label{thm:corhke} 
Assume Assumption \ref{asmp:1-1asp}. There exist $c_1>0$ and  
$R_*(x_0)\in [1,\infty)$ such that if $R\ge R_*(x_0)$, then
\[
\sup_{0<s\le T}p_s(x,y)\le c_1T^{-d/2}\,\qquad~\mbox{for all }~x,y\in B(x_0,2R)~\mbox{ with }~
d(x,y)\ge R, 
\]
where $T=R^2$.
\end{corollary}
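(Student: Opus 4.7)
The plan is to invoke Proposition~\ref{HKupperd} directly after checking its location/scale hypotheses on the relevant range of parameters, and then to observe that both of the regime-dependent estimates \eqref{eq:ketsur1}--\eqref{eq:ketsur2} collapse to a bound of the correct order $T^{-d/2}=R^{-d}$.

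Concretely, I would fix some $\eps\in(0,\delta/(1+\delta))$ and denote by $c_1$ the constant produced by Proposition~\ref{HKupperd}. For $x,y\in B(x_0,2R)$ with $d(x,y)\ge R$ and $s\in(0,T]=(0,R^2]$, one has $d(x,y)\vee s^{1/(2-\eps)}\ge d(x,y)\ge R$, so \eqref{eq:ahoeb1} is satisfied as soon as the $R_*(x_0)$ of the corollary is chosen at least $c_1^{-1}$ times that of the proposition, while \eqref{eq:ahoeb2} reduces to $d(x_0,x)\le 2R\le c_1(d(x,y)\vee s^{1/(2-\eps)})$, which is again absorbed by enlarging $R_*(x_0)$ (and, if needed, the constant $c_1$ appearing in the corollary). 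I would then split the supremum $\sup_{0<s\le T}p_s(x,y)$ according to whether $s>d(x,y)$ or $s\le d(x,y)$.

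In the far regime $s\le d(x,y)$, \eqref{eq:ketsur2} yields $p_s(x,y)\le c_4 e^{-c_5 d(x,y)}\le c_4 e^{-c_5 R}$, which decays super-polynomially in $R$ and is therefore eventually dominated by $R^{-d}=T^{-d/2}$. In the near-diagonal regime $d(x,y)<s\le R^2$, I would set $u=d(x,y)^2/s\ge 1$ and use \eqref{eq:ketsur1} to write
\[
p_s(x,y)\le c_2\,s^{-d/2}e^{-c_3 u}=c_2\,d(x,y)^{-d}\,u^{d/2}e^{-c_3 u}\le C\,d(x,y)^{-d}\le C R^{-d},
\]
since $u\mapsto u^{d/2}e^{-c_3 u}$ is bounded on $[0,\infty)$. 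Taking the supremum in either regime then yields the claimed estimate.

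There is no genuine obstacle here: the only point requiring a moment of attention is the mismatch between the specific constant $c_1$ of Proposition~\ref{HKupperd} and the prefactor $2$ in the ball $B(x_0,2R)$, which is handled cleanly by enlarging $R_*(x_0)$. The substantive content of the corollary is simply the observation that the Gaussian factor $e^{-c_3 d(x,y)^2/s}$ in the diffusive regime exactly compensates the gap between $s^{-d/2}$ and $R^{-d}$ when $s\in(d(x,y),R^2]$, while the ballistic exponential decay disposes of the sub-diffusive regime effortlessly.
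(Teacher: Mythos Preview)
Your proposal is correct and follows essentially the same route as the paper: verify the location hypotheses \eqref{eq:ahoeb1}--\eqref{eq:ahoeb2} of Proposition~\ref{HKupperd}, then split into the two regimes and bound each. Your split at $s=d(x,y)$ is in fact slightly cleaner than the paper's split at $s=R$, since it matches exactly the applicability ranges of \eqref{eq:ketsur1} and \eqref{eq:ketsur2}; the paper's version tacitly relies on $d(x,y)\asymp R$ to make its coarser split work.
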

 
For a subset $A\subset G$, let $\{X_t^A\}_{t\ge 0}$ be the process killed on exiting $A$ and
define the Dirichlet heat kernel $p^A_t(\cdot,\cdot)$ as
\[
p_t^A(x,y)=P^x(X^A_t=y)/\theta_y.
\]
Then the following heat kernel lower bound holds. 
\begin{proposition}
\label{Pr:13.1}
Assume Assumption \ref{asmp:1-1asp}. Then there exist $c_1,\delta_0\in (0,1)$ 
and $T_1(x_0)\in [1,\infty)$ such that 
\[
p_t^{B(x_0,t^{1/2})}(x,y)
\ge c_1t^{-d/2},~
\forall x,y\in B(x_0, \delta_0t^{1/2})
\]
for all $t\ge T_1(x_0)$. 
\end{proposition}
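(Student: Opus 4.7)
The plan is the classical three-step scheme for extracting a Dirichlet heat kernel lower bound from elliptic Harnack, two-sided control on exit times, and volume regularity. First, produce a tight two-sided exit time estimate from a ball concentric with and slightly smaller than $B(x_0, \sqrt{t})$. Second, deduce an on-diagonal Dirichlet lower bound $p_t^{B(x_0, \sqrt{t})}(x, x) \ge c\, t^{-d/2}$ for every $x$ in the smaller ball. Third, promote this to the desired uniform near-diagonal bound by upgrading \eqref{EH} to a parabolic Harnack inequality.

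For the first step, fix $\delta_0 \in (0,1)$, set $r = (1-\delta_0)\sqrt{t}$, and pick $x \in B(x_0, \delta_0 \sqrt{t})$, so that $B(x,r) \subset B(x_0, \sqrt{t})$. Assumption \ref{asmp:1-1asp}(ii) delivers $E^x[\tau_{B(x,r)}] \ge c r^2 \asymp c' t$. The matching upper bound comes from
\[
P^x(\tau_{B(x,r)} > K r^2) \le P^x(X_{Kr^2} \in B(x,r)) = \sum_{y \in B(x,r)} p_{Kr^2}(x,y)\,\theta_y \le c K^{-d/2},
\]
after invoking Corollary \ref{thm:corhke} for the kernel and Assumption \ref{asmp:1-1asp}(iv) for the volume. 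Paley--Zygmund then yields $P^x(\tau_{B(x,r)} \ge \eta t) \ge \eta'$ with $\eta, \eta' > 0$ independent of $t$. Writing $B_t = B(x_0, \sqrt{t})$ and applying Cauchy--Schwarz,
\[
p_{2\eta t}^{B_t}(x,x) = \sum_y p_{\eta t}^{B_t}(x,y)^2\,\theta_y \ge \frac{(\eta')^2}{\theta(B_t)} \ge c_1 t^{-d/2},
\]
which, after rescaling time by a harmless constant, is the on-diagonal Dirichlet lower bound at $x$.

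Extending this on-diagonal bound to the desired uniform near-diagonal estimate is the main obstacle, because $y \mapsto p_t^{B_t}(x, y)$ is not harmonic and \eqref{EH} does not apply to it directly. My plan is to bootstrap elliptic Harnack into a parabolic Harnack inequality on space-time cylinders $[s, s+t] \times B(z, \sqrt{t})$ sitting inside $[0, t] \times B_t$, following the Grigor'yan--Saloff-Coste--Delmotte route. The inputs this upgrade requires are precisely elliptic Harnack, volume doubling (Assumption \ref{asmp:1-1asp}(iv)), and the two-sided exit time control established in step one. Once a parabolic Harnack inequality is available, applying it to the caloric function $(s, y) \mapsto p_s^{B_t}(x, y)$ and chaining a bounded number of cylinders from the on-diagonal point $(2\eta t, x)$ to any target $(t, y)$ with $y \in B(x_0, \delta_0 \sqrt{t})$ produces $p_t^{B_t}(x, y) \ge c\, t^{-d/2}$, which is the stated conclusion.

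The delicate bookkeeping lies in verifying that this elliptic-to-parabolic Harnack upgrade still goes through in the present setting, where the hypotheses hold only on fattened balls of radius $R^{1+\delta}$ and only past the (random, in applications) thresholds $T_0, R_0, \ldots, R_4$ attached to $x_0$. Accommodating this fattened-ball feature is the precise reason Assumption \ref{asmp:1-1asp} is stated in the particular form given, and why all five clauses enter non-trivially into the proof.
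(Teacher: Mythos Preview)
Your first two steps (the two–sided exit time control and the on–diagonal Dirichlet lower bound via Cauchy--Schwarz and the volume bound) are essentially the same as the paper's, and they are fine.

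The gap is in your third step. You propose to upgrade \eqref{EH} to a full parabolic Harnack inequality and then chain. But in this paper, the parabolic Harnack inequality (Theorem~\ref{thm:PHI}) is proved \emph{using} Proposition~\ref{Pr:13.1}: the balayage argument in the appendix needs precisely the near--diagonal Dirichlet lower bound you are trying to establish. So invoking PHI here is circular. More generally, the routes you cite (Saloff-Coste, Delmotte) reach PHI through volume doubling plus a Poincar\'e inequality, which is not part of Assumption~\ref{asmp:1-1asp}; and the known implications of the form ``\eqref{EH} $+$ volume doubling $+$ exit time regularity $\Rightarrow$ PHI'' go via the near--diagonal lower estimate, i.e.\ via Proposition~\ref{Pr:13.1} itself.

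The paper's route to the off--diagonal extension avoids PHI entirely, and this is the idea you are missing. At fixed $t$ the function $u(y)=p_t^A(x,y)$ solves the Poisson equation $\mathcal L_y u=\partial_t p_t^A(x,\cdot)$, and the right--hand side is uniformly small: Proposition~\ref{PR:12.1}(iii) gives $|\partial_t p_t^A(x,y)|\le Ct^{-(d/2+1)}$ from the on--diagonal upper bound alone. Iterating \eqref{EH} produces an elliptic oscillation inequality (Proposition~\ref{Pr:11.1}), and its Poisson--equation version (Proposition~\ref{Pr:11.2}) yields
\[
\osc_{B(x_0,\sigma r)} u \le 2\bigl(\overline E(x_0,r)+\varepsilon\,\overline E(x_0,R)\bigr)\max|\partial_t p_t^A(x,\cdot)|
\le C\,\frac{r^2+\varepsilon R^2}{t^{d/2+1}}.
\]
Choosing $R=(t/\varepsilon)^{1/2}$ and $r=\varepsilon R$ makes the right--hand side at most $C\varepsilon\,t^{-d/2}$, which for small $\varepsilon$ is less than half the on--diagonal value $u(x)\ge c\,t^{-d/2}$. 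This ``$u$ is almost harmonic because $\partial_t u$ is small'' argument, due to Grigor'yan--Telcs, is what replaces your appeal to PHI.
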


\underline{Parabolic Harnack inequalities and H\"older continuity of caloric functions}\vspace{.1cm}
 
\noindent For $x\in G$ and $R, T>0$,
let $C_*\ge 2$, $Q(x,R,T):=(0,4T]\times B(x,C_*R)$ and define 
\begin{eqnarray*}
Q_-(x,R,T):=[T,2T]\times B(x,R),~~Q_+(x,R,T):=[3T,4T]\times B(x,R).
\end{eqnarray*}
Let $u(t,x)$ be a function defined on $[0,4T]\times \bar B(x,C_*R)$. We say $u(t,x)$ is caloric on $Q$
if it satisfies the following: for $t\in (0,4T)$ and $y\in B(x,C_*R)$:
\[
\partial_t u(t,y)={\mathcal L}_\theta u(t,y).
\]
We then have the following.
 
\begin{theorem}\label{thm:PHI}{\bf (Parabolic Harnack inequalities)}\\
Assume Assumption \ref{asmp:1-1asp}. Then 
there exist $c_1>0, C_*\ge 2$
and $R_5(x_0)\in [1,\infty)$ such that for any $R\ge R_5(x_0)$, and 
any non-negative function $u=u(t,x)$ which is caloric on $Q(x_0, R, R^2)$, it holds that
\begin{equation}\label{eq:PHI-u}
\sup_{(t,x)\in Q_-(x_0, R, R^2)}u(t,x)\le c_1\inf_{(t,x)\in Q_+(x_0, R, R^2)}u(t,x).  
\end{equation}
\end{theorem}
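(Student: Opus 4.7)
The plan is to derive the parabolic Harnack inequality from the two-sided heat kernel estimates already established --- Proposition \ref{HKupperd} for the Gaussian upper bound and Proposition \ref{Pr:13.1} for the near-diagonal Dirichlet lower bound --- together with the volume doubling part of Assumption \ref{asmp:1-1asp}. The argument follows the classical Moser--Saloff-Coste scheme, adapted to a setting where the hypotheses hold only at sufficiently large scales around the distinguished point $x_0$.

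As a preliminary, I would upgrade Proposition \ref{Pr:13.1} to a near-diagonal lower bound covering all of $B(x_0, R)$ (not just the smaller ball $B(x_0, \delta_0 \sqrt t)$). Iterating the Chapman--Kolmogorov identity
\[
p^A_{t_1+t_2}(x,y)=\int_A p^A_{t_1}(x,z)\,p^A_{t_2}(z,y)\,\theta(z)\,dz
\]
a bounded number of times (depending only on $\delta_0$), together with volume doubling to estimate $\theta(B(z,\delta_0\sqrt t))$ from below, allows me to choose $C_* \ge 2$ large enough that
\[
p^{B(x_0, C_* R)}_t(x, y) \;\ge\; c\, T^{-d/2},\qquad \forall\, x, y \in B(x_0, R),\; t \in [T/4, 4T],\; T = R^2,
\]
for all $R$ large enough. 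Now let $u \ge 0$ be caloric on $Q = Q(x_0, R, R^2)$. The supermartingale property of $r \mapsto u(s_+ - r, X^A_r)$ (a consequence of $u \ge 0$ and the caloric equation $\partial_t u = \mathcal{L}_\theta u$) yields, for any $(s_+, x_+) \in Q_+$ and any $s_- \in [T, 2T]$,
\[
u(s_+, x_+) \;\ge\; \int_A p^A_{s_+ - s_-}(x_+, y)\, u(s_-, y)\, \theta(y)\, dy \;\ge\; c\, T^{-d/2} \int_{B(x_0, R)} u(s_-, y)\, \theta(y)\, dy,
\]
which, after averaging in $s_-$, gives $\inf_{Q_+} u \ge c'\, T^{-1-d/2} \iint_{Q_-} u(s,y)\,\theta(y)\,ds\,dy$, and more generally the same bound with $Q_-$ replaced by a comparable enlarged cylinder.

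The matching mean value inequality
\[
\sup_{Q_-} u \;\le\; C\, T^{-1-d/2} \iint_{\tilde Q_-} u(s,y)\, \theta(y)\, ds\, dy
\]
on a slightly enlarged cylinder $\tilde Q_-$ is the main technical hurdle. I would obtain it by Moser iteration: the on-diagonal piece of Proposition \ref{HKupperd} combined with volume doubling implies a Nash inequality --- and hence a Sobolev-type inequality --- on $A$, and testing the caloric equation against $u^{p-1}\phi^2$ for spatial-temporal cutoffs $\phi$ supported in $Q$ produces the Caccioppoli estimates which feed the classical $L^p \to L^{p\kappa}$ iteration, whose $p \to \infty$ limit is the desired mean value inequality. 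Combining the two displayed bounds, after a covering argument using volume doubling to align the integration domains $Q_-$ and $\tilde Q_-$ up to multiplicative constants, completes the proof. The principal difficulty throughout is that all the assumptions are available only at sufficiently large scales around $x_0$, so care is needed in the Moser iteration and in the Chapman--Kolmogorov chaining to stay in the regime where the estimates apply without losing constants.
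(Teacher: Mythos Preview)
Your strategy is viable in principle but takes a genuinely different route from the paper. The paper does not use Moser iteration or a mean-value inequality at all. Instead it invokes the balayage (r\'eduite) representation of caloric functions, following Barlow--Hambly \cite{BH-LCLT}. One sets $B=B(x_0,C_*R)$, $B_1=B(x_0,C'R)$ with $1<C'<C_*$, $E=(0,4R^2]\times B_1$, and writes the r\'eduite of $u$ on $E$ as
\[
u_E(t,x)=\sum_{y\in B_1}p^B_t(x,y)\,u(0,y)\,\theta(y)
+\sum_{y\in\partial(B\setminus B_1)}\int_0^{t} p^B_{t-s}(x,y)\,k(s,y)\,\theta(y)\,ds
\]
with a nonnegative density $k$. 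Since $u_E=u$ on $E\supset Q_-\cup Q_+$, one evaluates this at $(t_1,y_1)\in Q_-$ and $(t_2,y_2)\in Q_+$ and compares term by term: the Dirichlet lower bound of Proposition~\ref{Pr:13.1} bounds $p^B_{t_2-s}(y_2,y)$ from below by $c_1T^{-d/2}$, while Assumption~\ref{asmp:1-1asp}(i) and Corollary~\ref{thm:corhke} bound $p^B_{t_1-s}(y_1,y)$ from above by $c_2T^{-d/2}$ on the relevant range of $(s,y)$. This gives $u(t_2,y_2)\ge c_1c_2^{-1}u(t_1,y_1)$ directly, with no integration over $Q_-$ and no iteration.

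Your approach --- a parabolic $L^\infty$ mean-value inequality via Nash/Sobolev and Moser iteration, combined with the submartingale lower bound --- is essentially the method of \cite{ADS}, which the paper explicitly contrasts with its own. It can be made to work, but the step you flag as the ``main technical hurdle'' is real: you must extract a scale-$R$ Dirichlet--Sobolev inequality from an on-diagonal bound that is only assumed for $t\ge T_0(x_0)$, and run the iteration so that every instance stays in this large-scale regime. The balayage argument sidesteps all of this: once the two-sided heat kernel bounds are in hand, it is a half-page comparison with no functional inequalities and no iteration, which is precisely why the paper orders things as ``heat kernel bounds first, Harnack afterwards.''
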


\begin{corollary}\label{thm:equiH}
Assume Assumption \ref{asmp:1-1asp}. Then there exist $c_1,\beta>0$, $C_*\ge 2$ and $R_6(x_0)\in [1,\infty)$ such that
the following holds: For any $R\ge R_6(x_0)$ and $T'\ge R^2+1$, let $R'=\sqrt {T'}$ and suppose that
$u$ is a positive caloric function on $Q(x_0,R',T')$. Then for 
any $x_1,x_2\in B(x_0,R)$ and any $t_1,t_2\in [4(T'-R^2),4T']$, we have
\[
|u(t_1,x_1)-u(t_2,x_2)|\le c_1(R/T'^{1/2})^\beta\sup_{Q_+(x_0,R',T')}u. 
\]
\end{corollary}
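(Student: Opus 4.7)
The plan is to derive the Hölder continuity from the parabolic Harnack inequality of Theorem \ref{thm:PHI} by the classical Moser oscillation-decay iteration. The key one-step input is that, for any positive caloric function $u$ on a parabolic cylinder of appropriate scale and position, the oscillation on the smaller forward sub-cylinder is a fixed fraction $\rho<1$ of the oscillation on the big cylinder. Iterating this over $K$ nested cylinders whose scales shrink by a fixed ratio $\lambda$ produces an oscillation decay $\rho^K$, which translates into a power-law $(R/R')^\beta$ with $\beta=\log(1/\rho)/\log(1/\lambda)$ — exactly the Hölder estimate claimed.

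To implement the one-step reduction I would fix a cylinder $Q=(\tau,\tau+4r^2]\times B(y,C_* r)$ with $y$ close to $x_0$ and $r\ge R_5(x_0)$, set $m:=\inf_Q u$ and $M:=\sup_Q u$, and apply Theorem \ref{thm:PHI} to the two nonnegative caloric functions $u-m$ and $M-u$. This yields
\begin{align*}
\sup_{Q_-}(u-m) &\le c_1 \inf_{Q_+}(u-m),\\
\sup_{Q_-}(M-u) &\le c_1 \inf_{Q_+}(M-u).
\end{align*}
Adding and rearranging gives $\osc_{Q_+} u \le (1-c_1^{-1})\osc_Q u$, so that $\rho:=1-c_1^{-1}\in(0,1)$ plays the role of the reduction factor. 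Note that Assumption \ref{asmp:1-1asp} is uniform over a neighbourhood of $x_0$, so PHI is available not only for cylinders centred at $x_0$ but also for cylinders centred at any $y$ in a controlled region around $x_0$, which is exactly what the iteration requires.

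For the iteration itself, I would fix a reference point $(\bar t,\bar x)\in [4(T'-R^2),4T']\times B(x_0,R)$ and construct a nested sequence $Q^{(0)}\supset Q^{(1)}\supset\cdots\supset Q^{(K)}$ of parabolic cylinders, each centred in space at $\bar x$, with spatial radii $r_k=R'\lambda^k$ for some fixed $\lambda\in(0,1/C_*]$ and with time intervals chosen so that $Q^{(k+1)}\subset Q_+^{(k)}$ and $Q^{(0)}\subset Q(x_0,R',T')$. Iterating Step 1 then gives $\osc_{Q^{(k)}} u \le \rho^k\osc_{Q^{(0)}} u$. Taking $K$ as the largest integer with $r_K\ge R$ yields $K\asymp\log(R'/R)$ and
\[
\osc_{Q^{(K)}} u \le \rho^K\osc_{Q^{(0)}} u \le 2\rho^K\sup_{Q^{(0)}} u \le C(R/R')^{\beta}\sup_{Q^{(0)}} u.
\]
Any two admissible points $(t_1,x_1),(t_2,x_2)$ belong to a single $Q^{(K)}$ for a suitable choice of $(\bar t,\bar x)$ (or can be linked by a bounded chain of such $Q^{(K)}$'s, whose number is absorbed in the constant), and one more application of PHI across the big cylinder dominates $\sup_{Q^{(0)}}u$ by $C\sup_{Q_+(x_0,R',T')}u$, giving the claim with $\beta$ as above.

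The main technical obstacle is the bookkeeping: one must make sure that at every step $Q^{(k)}\subset Q(x_0,R',T')$ (so that Theorem \ref{thm:PHI} applies), that the inclusions $Q^{(k+1)}\subset Q_+^{(k)}$ are preserved consistently through the time and space scalings, that all scales $r_k$ remain above $R_5(x_0)$ (which is ensured by taking $R_6(x_0)$ large enough compared with $R_5(x_0)$), and that the centres $\bar x$ stay in the region of uniform validity of PHI. The case $R\sim R'$ where the stated estimate is essentially trivial (bound by $2\sup u$) can be handled separately by adjusting constants.
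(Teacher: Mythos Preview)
Your approach is correct and is precisely the standard Moser oscillation-decay iteration from the parabolic Harnack inequality; the paper itself omits the proof and simply refers to \cite[Corollary~4.2]{BGK} (equivalently \cite[Proposition~4.6]{ADS}, \cite[Proposition~3.2]{BH-LCLT}), all of which carry out exactly the argument you describe. One small simplification: since $x_1,x_2\in B(x_0,R)$ and $t_1,t_2\in[4(T'-R^2),4T']$, you can keep all nested cylinders centred at $x_0$ with top time $4T'$ (taking $\lambda\le 1/\max(C_*,2)$), which lets you invoke Theorem~\ref{thm:PHI} directly as stated rather than appealing to uniformity of Assumption~\ref{asmp:1-1asp} to shift the centre.
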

\ \par 
\underline{Local central limit theorem} \vspace{.1cm}

\noindent In the following, we write
the Gaussian heat kernel with covariance matrix $\Sigma$ (which is a positive definite 
$d\times d$ matrix) as
\[
k_t(x):=\frac 1{\sqrt{(2\pi t)^d\det \Sigma}}\exp (-\frac{x\Sigma^{-1}x}{2t}). 
\]
When $G={\mathbb Z}^d$, $x_0=0$ and $d\ge 2$, 
if we further assume the invariance principle, we can obtain the following local limit theorem.

\begin{proposition}\label{thm:LCLT}
Assume Assumption \ref{asmp:1-1asp} and the following; 
There exists $c_1>0$ such that $\lim_{R\to \infty} R^{-d}\pi(B(0,R))=c_1$ and 
\begin{eqnarray*}
\lim_{n\to\infty}P^0(n^{-1/2}X_{[nt]}\in H(y,R))=\int_{H(y,R)}k_t(z)dz, \qquad\forall y\in \R^d, R, t>0,
\end{eqnarray*}
where $H(y,R)=y+[-R,R]^d$. Then there exist $a>0$ such that 
for each $T_1,T_2>0$ and each $M>0$, we have
\[
\lim_{n\to\infty}\sup_{|x|\le M}\sup_{t\in [T_1,T_2]}|n^dp^\omega_{n^2t}(0,[nx])-ak_t(x)|=0,\]
where we write 
$[x]=([x_1],\cdots, [x_d])$ for $x=(x_1,\cdots, x_d)\in {\mathbb R}^d$. 
\end{proposition}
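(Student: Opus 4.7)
The plan is to combine the parabolic H\"older equicontinuity of caloric functions (Corollary \ref{thm:equiH}) with the invariance principle hypothesis via mesoscopic averaging, following the now-standard strategy for deducing a local CLT from a functional CLT. The three main inputs from the excerpt are the uniform on-diagonal bound of Corollary \ref{thm:corhke}, the H\"older estimate of Corollary \ref{thm:equiH}, and the averaged convergence supplied by the invariance principle together with the volume convergence $R^{-d}\pi(B(0,R))\to c_1$. Write $u_n(t,y):=n^d p^\omega_{n^2 t}(0,[ny])$ for $(t,y)\in(0,\infty)\times\R^d$; this is the object whose uniform convergence to $a\,k_t$ we want to establish.

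First I would prove equicontinuity and uniform boundedness of the family $\{u_n\}$ on compact subsets of $(0,\infty)\times\R^d$. Since $(s,z)\mapsto p^\omega_s(0,z)$ is caloric on $(0,\infty)\times\Z^d$, Corollary \ref{thm:equiH} applied on parabolic cylinders of space-scale $R\asymp\eta n$ and time-scale $T'\asymp n^2$, followed by rescaling by $(n^2,n)$, yields
\[
|u_n(t_1,y_1)-u_n(t_2,y_2)|\le c\,\bigl(|y_1-y_2|\vee |t_1-t_2|^{1/2}\bigr)^\beta\,\sup_{K'}u_n,
\]
uniformly in $n$, for $(t_i,y_i)$ in a compact $K$ and $K'$ a slightly enlarged compact. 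The required uniform bound on $\sup_{K'}u_n$ is supplied by Corollary \ref{thm:corhke}. Arzel\`a--Ascoli then extracts a subsequence $n_k\to\infty$ along which $u_{n_k}$ converges, uniformly on compacts, to a continuous limit $u\colon(0,\infty)\times\R^d\to\R_+$.

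Next I identify $u$ by testing against $\phi\in C_c(\R^d)$. The invariance principle applied to indicators of small boxes, combined with step-function approximation, gives $E^0[\phi(X_{n^2t}/n)]\to\int\phi(y)k_t(y)\,dy$. On the other hand, since the walk is reversible with respect to $\theta$,
\[
E^0[\phi(X_{n^2t}/n)]=\sum_{z\in\Z^d}\theta_z\,p^\omega_{n^2t}(0,z)\,\phi(z/n)=\frac{1}{n^d}\sum_{z\in\Z^d}\theta_z\,u_n(t,z/n)\,\phi(z/n).
\]
Partitioning the support of $\phi$ into mesoscopic boxes of side $\eta$, the uniform convergence of $u_n$ to $u$ on this support and the volume hypothesis $n^{-d}\pi(nH)\to c_1|H|$ for boxes $H$ (extended from the assumption at the origin by translation and Assumption \ref{asmp:1-1asp}(iv)) turn the right-hand side into a Riemann-type sum converging to $c_1\int\phi(y)u(t,y)\,dy$. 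Comparing the two limits yields $u(t,y)=k_t(y)/c_1$, so the subsequential limit is unique, the full sequence converges with $a=1/c_1$, and uniformity on $[T_1,T_2]\times\overline{B(0,M)}$ follows from the equicontinuity of the previous step.

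The principal obstacle is this identification step, the passage from the averaged, box-level information of the invariance principle to a pointwise heat-kernel limit; this is precisely what the mesoscopic-averaging argument, powered by Corollary \ref{thm:equiH}, overcomes. A secondary subtlety is extending the volume convergence from the distinguished point to arbitrary mesoscopic boxes; in the concrete i.i.d.\ RCM this is automatic by ergodicity, while in the abstract framework one relies on translating Assumption \ref{asmp:1-1asp}(iv).
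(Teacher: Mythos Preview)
Your proposal is correct and follows precisely the route the paper indicates: the paper omits the proof, noting only that given Corollary \ref{thm:equiH} it is ``similar to \cite[Theorem 1.11]{ADS} and \cite[Theorem 4.2]{BH-LCLT}'', and you have faithfully reconstructed that standard argument (H\"older equicontinuity plus on-diagonal bounds yield relative compactness; the invariance principle and volume asymptotics identify the limit via mesoscopic averaging). One minor remark: for the uniform bound on $\sup_{K'}u_n$ you should cite Assumption \ref{asmp:1-1asp}(i) directly rather than Corollary \ref{thm:corhke}, since the latter requires $d(x,y)\ge R$; and your flagged subtlety about volume convergence away from the origin is real but, as you note, is handled by ergodicity in the concrete RCM where the proposition is applied.
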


\subsection{\textbf{Part II: Models and results}}
In this subsection, we will consider the specific RCM 
with i.i.d. conductances that have polynomial lower tails at zero. 
In Part I, we consider a general weighted graph, but in Part II we consider 
$G=\Z^d$ and the conductance is nearest neighbor and random. 

Let us first define the model precisely (for more information on the RCM, see Biskup \cite{Biskup-review} or Kumagai~\cite{T}). Consider the $d$-dimensional hypercubic lattice $\Z^d$ and let $\E_d$ denote the set of (unordered) nearest-neighbor pairs, called edges or bonds, i.e. $\E_d = \{ \{x, y\} : x, y \in \Z^d, | x - y | = 1\}$. We use the notation $x\sim y$ if $(x,y)\in\E_d$, and $\omega_e = \omega_{xy}= \omega_{yx}$ to denote the random conductance of an edge $e$. Let $(\Omega, {\mathcal F}, \PP)$ be the probability space that governs the randomness of the media. We assume $\{\omega_e: e\in \E_d\}$ to be positive and i.i.d..
We define $\pi$, CSRW, VSRW, their Laplace operators and heat kernels etc. as in Part I. Note that we have two sources of randomness for the Markov chain: the randomness of the media and the randomness of the Markov chain. In order to clarify the randomness of the media, we often put $\omega\in \Omega$.
For example, we denote by $(P^x_\omega,\, x\in\Z^d)$ 
the Markov laws induced by the semigroup $\p^t_\omega:=e^{t\mathcal{L}_{\theta}}$, and by 
$
p^\omega_t ( x, y ) = P^x_\omega ( X_t = y )/\theta (y)
$ the heat kernel. Let $E^x_\omega$ be the expectation with respect to $P^x_\omega$.
As in the last subsection, we use the same notation for CSRW and VSRW when it is clear which Markov chain we are talking about. 

\ \newline 

Our purpose is to investigate the effects of fluctuations in the environment on the behavior of the random walk. We shall in particular get bounds on the long time behavior of the return probability $P^0_\omega (X_t=0)$. 

It is well known that when the conductances are bounded and bounded away from $0$ (the uniformly elliptic case), then the decay of the return probability obeys a standard power law with exponent 
$d/2$. Indeed, the following (much stronger) estimates hold:  there exist constants 
$c_1,\cdots,c_4$ such that for all $x$ and $y$
for all $t\ge d(x,y)$, then 
\begin{equation}\label{eq:GHKq}
c_1 t^{-d/2}\exp(-c_2|x-y|^2/t) \le P^x_\omega( X_t = y) \le c_3 t^{-d/2}\exp(-c_4|x-y|^2/t),
\end{equation}
both for CSRW and VSRW. We refer to Delmotte~\cite{Delmotte}. 

The first sharp results for non-uniformly elliptic conductances were obtained 
independently by Barlow \cite{Barlow} and 
by Mathieu and Remy~\cite{Mathieu-Remy} 
in the case of random walks on super-critical percolation clusters. 
In this case,  
conductances are allowed to take two values only, $0$ and $1$. We assume that  $\PP(\omega_b>0)>p_\cc(d)$, where $p_\cc(d)$ is the critical threshold for bond percolation on~$\Z^d$ and we condition on the event that the origin belongs to the infinite cluster of positive conductances. Mathieu and Remy \cite{Mathieu-Remy} showed that there exists a constant $C$ such that, for almost all realizations of the conductances, for large enough $t$, we have   
\begin{equation}
\label{MR}
\sup_y P^0_\omega (X_t=y) \le C t^{-d/2}\,.
\end{equation} 
Barlow \cite{Barlow} obtained detailed
two-sided Gaussian heat kernel bounds for the 
random walks on super-critical percolation clusters.
Namely, he proved \eqref{eq:GHKq} for all $x,y$ on the 
infinite cluster and for large enough $t$. 

Quite often in statistical mechanics, results in percolation help understanding more general situations through comparison arguments; the present paper is no exception. 

The bounds on the return probability in the percolation case eventually lead to the proof of functional central limit theorems and local C.L.T. . We refer to Sidoravicius and Sznitman~\cite{Sidoravicius-Sznitman}, Berger and Biskup~\cite{BB} and Mathieu and Piatnitski~\cite{Mathieu-Piatnitski} for the percolation model, and Barlow and Hambly \cite{BH-LCLT} for the local C.L.T., and to Mathieu~\cite{QIP}, Biskup and Prescott~\cite{BP}, Barlow and Deuschel~\cite{Barlow-Deuschel}, Andres, Barlow, Deuschel and Hambly~\cite{ABDH} for more general models of random conductances.

In the other direction, examples show that a slow decay of the return probability is possible for random positive conductances. 
In Fontes and Mathieu \cite{Fontes-Mathieu}, the authors computed the annealed return probability for a model of random walk with positive conductances whose law has a power tail near $0$. They showed a transition from a classical decay like $t^{-d/2}$ to a slower decay. In~\cite{BBHK}, Berger, Biskup, Hoffman and Kozma proved that  for $d\ge5$, given any sequence $\lambda_n\uparrow\infty$, there exists a product law $\PP$ on $(0,\infty)^{\E_d}$ such that 
$$P^0_\omega (X_{n_k}=0) \geq c(\omega)(\lambda_{n_k} n_k)^{-2}$$ 
along a deterministic sequence $(n_k)$, with $c(\omega)>0$ almost surely. In this construction, although the conductances are almost surely positive, their law has a very heavy tail near $0$ of the form $\PP(\omega_{xy}<s) \sim |\log(s)|^{-\theta}$, $\theta>0$. 
(Here we write $f\sim g$ to mean that
${f(t)}/{g(t)}=1+o(1)$ for functions $f$ and $g$.) 

One may then ask for what choice of $\PP$ does the transition from a classical decay with rate $t^{-d/2}$ to a slower decay happens. 
A partial answer to this question is in the papers of  Boukhadra~\twocite{B1}{B2}. 

Let us consider positive and bounded conductances, with a power-law tail near zero: let $\gamma>0$ and assume the following conditions~: for any $e \in  \E_d $, 
\begin{equation}
\label{P}
\tag{P}
\omega_e \in [0,1],\qquad \PP ( \omega_e \le u ) = 
u^{\gamma} ( 1 + o(1) ), \quad u\to 0.
\end{equation}

It is proved in Boukhadra \cite{B2} that, when  \eqref{P} is satisfied with $\gamma>d/2$ , then 
\begin{equation}
\label{B2eq} 
P^0_\omega (X_t=0) = t^{-\frac{d}{2}+o(1)}, 
\end{equation} for almost all environments and as $t$ tends to $+\infty$. 

On the other hand, it is proved in \cite{B1} that, still  
for an environment satisfying \eqref{P}, then 
for $d\ge 5$
\begin{equation} 
\label{min}
P^0_\omega (X_{2n}=0) \geq C(\omega)\, n^{-(2+\delta)},
\end{equation}
 where $\delta =\delta(\gamma)$ is a constant such that $\delta(\gamma)\longrightarrow0$ as $\gamma \to 0$. 
 
 \medskip

The next theorem improves upon \eqref{B2eq} in two respects: first we have extended the domain of admissible values of $\gamma$; secondly and more importantly, we obtain a much sharper upper bound on the return probability, to be compared with \eqref{MR}. 

In this subsection, we use an equivalent and more appropriate definition of the box:  
$$B ( x, n) = x +  [-n,n]^d \cap \Z^d$$ 
for all $x \in \Z^d$ and write $B_n = B (0, n)$.

\begin{theorem}
\label{Th} 
Let $d \ge 2$ and suppose that the conductances $( \omega_e, e \in \E_d )$ are i.i.d. satisfying \eqref{P}. Then we have :

\noindent
$(1)$ For the CSRW, for any $\gamma > \frac{1}{8} \frac{d}{d - 1/2}$, there exist positive constants $\delta, c_1 > 0$ such that $\PP$-a.s. for all $x,y\in B (0, {t^{(1+\delta)/2}})$ and for $t$ large enough, 
\begin{equation}
\label{HKonup}
p^\omega_t (x,y)\le c_1 t^{-d/2}.
\end{equation}

\medskip
\noindent
$(2)$ For the VSRW for any $\gamma > 1/4$, there exist positive constants $\delta', c_2 > 0$ such that $\PP$-a.s. for all $x,y\in B (0, {t^{(1+\delta')/2}})$ and  for $t$ large enough, 
\begin{equation}
\label{HKonup1}
p^\omega_t (x,y)\le c_2 t^{-d/2}.
\end{equation} 
\end{theorem}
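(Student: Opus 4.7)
The plan is to derive the on-diagonal upper bound directly by a Nash/Moser iteration driven by quenched moment control on the environment. Notice that the conclusion \eqref{HKonup}--\eqref{HKonup1} is precisely hypothesis (i) of Assumption~\ref{asmp:1-1asp}, so producing it here simultaneously unlocks all of Part~I; what follows is restricted to proving \eqref{HKonup}--\eqref{HKonup1} themselves.

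First, I would translate \eqref{P} into usable moment bounds: for any $s<\gamma$, $\E[\omega_e^{-s}]<\infty$, while $\omega_e\le 1$ gives upper moments for free. A Borel--Cantelli argument combined with elementary concentration for i.i.d. sums then yields, $\PP$-a.s., for all sufficiently large $R$,
\begin{equation*}
\sum_{e\subset B(0,R)}\omega_e^{-s}\le C R^d,
\qquad
\sum_{x\in B(0,R)}\pi(x)^{-s'}\le C R^d,
\end{equation*}
for the admissible ranges of $s$ and $s'$ dictated by $\gamma$ (in the second sum, since $\pi(x)=\sum_{y\sim x}\omega_{xy}$, essentially $s'<\gamma$).

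Next, I would combine this with the Euclidean Nash inequality $\|f\|_2^{2+4/d}\le C\,\mathcal{E}_0(f,f)\|f\|_1^{4/d}$ (where $\mathcal{E}_0$ uses unit conductances) to produce a weighted Nash inequality adapted to each random walk. In the VSRW case the $L^2$-measure is counting measure, so only a single H\"older exchange between $\mathcal{E}_0$ and the weighted form $\mathcal{E}_\omega(f,f)=\tfrac12\sum_{x\sim y}\omega_{xy}(f(x)-f(y))^2$ is needed; choosing the exchange exponent to balance against the Nash exponent $4/d$ yields the threshold $\gamma>1/4$. In the CSRW case a second H\"older exchange is required to pass from counting measure to $\pi$ on the $L^2$-side, which draws in moments of $\pi^{-1}$; rebalancing the two exchanges produces the threshold $\gamma>\frac{d}{8(d-1/2)}$. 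The classical Nash argument, differentiating $t\mapsto\|P_t^\omega f\|_2^2$, then upgrades the weighted Nash inequality to $\|P_t^\omega\|_{L^1\to L^\infty}\le C t^{-d/2}$, which is \eqref{HKonup}--\eqref{HKonup1}.

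The main obstacle is the assembly of the weighted Nash inequality with \emph{sharp} exponents, particularly in the CSRW case where the interplay between the $\omega$-weighting in $\mathcal{E}$ and the $\pi$-weighting in $L^2$ must be tuned very precisely; the fact that CSRW tolerates a smaller $\gamma$ than VSRW reflects the way its $L^2$-normalisation absorbs part of the cost of small conductances. A secondary difficulty is the passage from the moments-in-a-ball estimates of the first step to a pointwise bound valid for all $x,y\in B(0,t^{(1+\delta)/2})$: the parameter $\delta>0$ provides the slack needed to absorb, via a union bound, the $O(t^{d(1+\delta)/2})$ possible starting vertices that arise when the Nash inequality is localised.
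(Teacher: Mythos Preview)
Your approach is the moment/functional-inequality route of Andres--Deuschel--Slowik \cite{ADS}, and it is \emph{not} the route the paper takes. For the VSRW part~(2) this is fine: the paper itself records (in the Remarks immediately after Theorem~\ref{Th}) that the ADS method yields the threshold $\gamma>1/4$ for VSRW, so your sketch is a legitimate alternative there, although your one-line ``balancing the exchange exponent against $4/d$'' does not actually derive the number $1/4$.

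The genuine gap is in the CSRW part~(1). The claim that ``rebalancing the two H\"older exchanges produces the threshold $\gamma>\tfrac{d}{8(d-1/2)}$'' has no content as stated, and the paper explicitly records that the moment/Sobolev approach of \cite{ADS} gives only $\gamma>1/4$ for CSRW as well. The number $\tfrac{d}{8(d-1/2)}=\tfrac{d}{4(2d-1)}$ does not come from any H\"older balance: it comes from the percolation exponent $4d-2$, namely the minimal number of dual plaquettes needed to disconnect an \emph{edge} (as opposed to a vertex, which requires only $2d$) from infinity; see Lemma~\ref{nper}. A moment condition on $\omega_e^{-s}$ or $\pi(x)^{-s'}$ is a pointwise statistic and cannot detect this geometric distinction. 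Your heuristic that ``the $\pi$-normalisation absorbs part of the cost of small conductances'' is also misleading here: since $\omega_e\le 1$ implies $\pi\le 2d$, the passage from counting measure to $\pi$ on the $L^2$ side is free in the favourable direction, and the obstruction lies elsewhere.

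The paper's proof of the sharp CSRW threshold is completely different in spirit. One fixes $\xi>0$ small, compares $X$ with its trace $X^\xi$ on the supercritical cluster $\scrC^\xi=\{\omega_e\ge\xi\}$ (for which standard $t^{-d/2}$ bounds hold, Lemma~\ref{Pr:sb-f}), and controls the time spent in the holes via the spectral-gap estimates of Lemmas~\ref{vph}--\ref{sg-h}. Those spectral gaps are obtained by constructing, for every edge $e$ in the box, a path of conductances $\ge n^{-\alpha}$ from one endpoint of $e$ to $\scrC^\xi$ (Lemmas~\ref{good path}--\ref{h-inj}); the Peierls estimate \eqref{nper-eq'} with exponent $4d-2$ is exactly what allows $\alpha<2$ once $\gamma\alpha(4d-2)>d$, i.e.\ once $\gamma>\tfrac{d}{8(d-1/2)}$. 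None of this is visible from Nash iteration, so for part~(1) your proposal does not reach the stated theorem.
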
 

\medskip

Using the results in Part I, we obtain the following.

\begin{theorem}\label{Fithm} 
 Let $\gamma > \frac{1}{8} \frac{d}{d - 1/2}$ for CSRW and $\gamma >1/4$ for VSRW. Then
 the conclusions of Proposition \ref{Pr:13.1} (Heat kernel lower bound), Theorem \ref{thm:PHI} (Parabolic Harnack inequality), Corollary \ref{thm:equiH} (H\"older continuity of caloric functions)
 and Proposition \ref{thm:LCLT} (Local central-limit theorem) hold. 
\end{theorem}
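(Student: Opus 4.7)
The plan is to verify, $\PP$-almost surely at $x_0=0$, each of the five conditions in Assumption~\ref{asmp:1-1asp} under the stated ranges of $\gamma$, and then deduce the four claimed conclusions (Proposition~\ref{Pr:13.1}, Theorem~\ref{thm:PHI}, Corollary~\ref{thm:equiH} and Proposition~\ref{thm:LCLT}) directly from the results of Part~I. In what follows ``$\PP$-a.s.'' refers to typical environments.

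Condition~(i) is precisely the content of Theorem~\ref{Th}. Condition~(iv) is trivial for the VSRW, where $\nu$ is counting measure; for the CSRW the upper bound is immediate from $\pi(x)\le 2d$ (since $\omega_e\le 1$), while the lower bound $\pi(B_R)\ge c R^d$ follows from the spatial ergodic theorem applied to the stationary field $\{\pi(x)\}_{x\in\Z^d}$, whose mean $2d\,\E[\omega_e]$ is positive; a standard Borel--Cantelli argument on a dyadic sequence of radii upgrades this to uniform control over centres in $B(0,R^{1+\delta})$. Condition~(v) for the CSRW uses $\pi(x)\ge\max_{y\sim x}\omega_{xy}$ together with $\PP(\pi(x)\le s)\le C s^{2d\gamma}$ for small $s$ and a union bound over $B(0,R)$, yielding $\min_{x\in B(0,R)}\pi(x)\ge R^{-\kappa}$ for any $\kappa$ sufficiently large. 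For the VSRW, \eqref{P} forces $\omega_e\le 1$, so $1\wedge\omega_e^{-1/2}=1$ for every edge; hence $\tilde d\equiv d$ and (v) holds trivially with $c_5=1$.

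Condition~(ii), the exit-time lower bound $E^x[\tau_{B(x,r)}]\ge c r^2$, is a classical consequence of the heat-kernel upper bound~(i) together with the volume bound~(iv): the upper bound and conservation of mass give, via Cauchy--Schwarz, a matching on-diagonal lower bound $p_t(x,x)\ge c t^{-d/2}$, from which the exit-time estimate at scale $t\sim r^2$ follows by a standard comparison between the killed and non-killed heat kernels on $B(x,r)$.

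The genuine obstacle is condition~(iii), the elliptic Harnack inequality on $B(0,R)$ for all large $R$. This cannot be extracted from the heat-kernel bounds alone and requires a dedicated argument. The natural route is a Moser iteration built on a weighted Sobolev--Poincar\'e inequality for the Dirichlet form $\tfrac12\sum_{x\sim y}(f(x)-f(y))^2\omega_{xy}$, in the spirit of Andres--Deuschel--Slowik. The quantitative thresholds $\gamma>\tfrac18\tfrac{d}{d-1/2}$ for the CSRW and $\gamma>1/4$ for the VSRW are precisely tailored so that the relevant $L^p$ moments of $\omega^{-1}$ are finite on balls at the appropriate scale, making the constants in the Sobolev and Poincar\'e inequalities deterministic for large $R$; this is exactly what allows the Moser scheme to close. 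Once (iii) is established, the four conclusions follow from the corresponding statements of Part~I; for Proposition~\ref{thm:LCLT} one additionally invokes the quenched invariance principle for i.i.d.\ positive conductances of Andres--Barlow--Deuschel--Hambly, combined with $R^{-d}\pi(B_R)\to 2d\,\E[\omega_e]$ from the ergodic theorem, to supply the remaining hypotheses.
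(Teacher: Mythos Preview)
Your verification of conditions (i), (iv), (v) and the LCLT hypotheses is essentially fine (the paper uses slightly different tools for (iv) and (v), but your arguments work). There are, however, genuine gaps in your treatment of (ii) and (iii).

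For (ii) your argument is circular. The Cauchy--Schwarz route to an on-diagonal lower bound reads
\[
p_t(x,x)=\sum_y p_{t/2}(x,y)^2\theta(y)\ge \frac{\bigl(P^x(X_{t/2}\in B(x,r))\bigr)^2}{\theta(B(x,r))},
\]
and to make the right-hand side of order $t^{-d/2}$ at $r\asymp\sqrt{t}$ you need $P^x(X_{t/2}\in B(x,\sqrt{t}))\ge c>0$, which is precisely the exit-time lower bound you are trying to prove. The on-diagonal upper bound (i) combined with (iv) yields only the \emph{upper} bound $E^x[\tau_{B(x,r)}]\le cr^2$ (this is Lemma~\ref{Lem:3.1}(i)); it does not give the lower bound. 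The paper obtains (ii) by comparison with the time-changed process $X^\xi$ on the strong percolation cluster $\scrC^\xi$: since $A(t)\le t$, the exit time of $X$ from $B(x,r)$ dominates that of $X^\xi$, and the latter satisfies $E^x[\tau^{X^\xi}_{B(x,r)}]\ge cr^2$ by the percolation-cluster estimates of \cite[Proposition~4.7]{ABDH}.

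For (iii) your proposed route is both unnecessary and insufficient. The elliptic Harnack inequality for the RCM with i.i.d.\ bounded positive conductances holds for \emph{every} $\gamma>0$ --- it is proved in \cite[Theorem~7.3]{ABDH} with no moment condition on $\omega^{-1}$ --- so the thresholds on $\gamma$ play no role here (harmonic functions are the same for CSRW and VSRW). Your claim that the thresholds are ``precisely tailored'' for a Moser scheme is incorrect: the Andres--Deuschel--Slowik moment conditions, when specialized to \eqref{P}, yield EHI only for $\gamma>1/4$, which would leave a gap in the CSRW range $\gamma\in\bigl(\tfrac18\tfrac{d}{d-1/2},\tfrac14\bigr]$. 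The paper simply invokes \cite[Theorem~7.3]{ABDH} for (iii).
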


\begin{remarks} 
Let us discuss in what sense the statements in Theorem \ref{Th} are optimal. 

\noindent $(1)$ 
For $d=2$ or $d=3$, the return probability $P^0_\omega (X_t = 0)$ a.s. decays like $t^{-d/2}$ even when our restrictions on $\gamma$ 
are not satisfied (in fact for any choice of i.i.d. positive conductances) as was proved in \cite{BBHK}. 

From Theorem \ref{Th}, we get that $P^0_\omega (X_t = 0)$ also a.s. decays like $t^{-d/2}$ when $\gamma > \frac{1}{8} \frac{d}{d - 1/2}$ (CSRW) 
or $\gamma>\frac 14$ (VSRW). Whether these restrictions on $\gamma$ are optimal or not, we do not know - but, as recalled in (\ref{min}), 
we know that when $d\ge 5$, then the return probability does not decay like $t^{-d/2}$ for small positive values of $\gamma$.

\smallskip
\noindent
$(2)$ In spite of $(1)$ above, the restrictions on $\gamma$ in Theorem \ref{Th} are optimal as far as the decay of 
$\sup_{x\in B_{\sqrt{t}}}p^\omega_t(x,x)$ is concerned. More precisely, we claim that if $\gamma < \frac{1}{8} \frac{d}{d - 1/2}$ (CSRW) 
or $\gamma<\frac 14$ (VSRW), then $\sup_{x\in B_{\sqrt{t}}}p^\omega_t(x,x)$ cannot decay like $t^{-d/2}$. 
The justification of this claim is related to trapping effects on the random walk induced by fluctuations of the conductances. 
These trapping effects depend on the model, CSRW or VSRW. 

The CSRW cannot be trapped on a site but it might be trapped on an edge. Indeed, assume there exists in $B_n$ an edge $e=\{x,y\}$ of conductance of order $1$ that is surrounded by edges of conductances of order $n^{- \mu}$ for some $\mu>0$. 
Starting at $x$, the random walk will oscillate between $x$ and $y$ for a time of order $n^{\mu}$. If we insist that $p^\omega_t (x,x)\le c_1 t^{-d/2}$ when $t$ is of order $n^2$, 
as in Theorem \ref{Th}\,$(1)$, 
this imposes $\mu<2$. 
It is not difficult to see that, under assumption  \eqref{P}, there will $\PP$-a.s exist edges of conductance of order $1$ that are surrounded by edges of conductances smaller than 
$n^{-\mu}$ for all $\mu$ such that $\mu \gamma(4d-2)<d$. Thus we deduce that it is not correct that $p^\omega_t (x,x)$ decays faster than $t^{-d/2}$ uniformly on the box 
$B_{\sqrt t}$ when $\gamma < \frac{1}{8} \frac{d}{d - 1/2}$. 

The VSRW may be trapped on a point: let $x$ be such that all edges containing $x$ have conductances of order $n^{- \mu}$. Then the VSRW will wait for a time of order $n^{\mu}$ before its first jump. Thus the estimate  $p^\omega_t (x,x)\le c_1 t^{-d/2}$ when $t$ is of order $n^2$ cannot hold unless $\mu < 2$. 
It is easy to deduce from that fact that statement (\ref{HKonup1}) in Theorem \ref{Th}\,$(2)$ is false when $\gamma < 1/4$. 

\smallskip
\noindent
$(3)$ One may also compare our estimates with the results in \cite{ADS}. 
In \cite{ADS}, the authors consider stationary environments of random conductances under some integrability conditions. When applied to i.i.d. conductances satisfying \eqref{P}, they obtain heat kernel upper bounds as in Theorem \ref{Th} provided that $\gamma>1/4$ for both models CSRW and VSRW. (See \cite[Proposition 6.3]{ADS} for CSRW. The same argument also works for VSRW, see the discussion in \cite[Remark 1.5]{ADS}.) 

Thus statement $(2)$ in Theorem \ref{Th} is not new but statement $(1)$ improves upon \cite{ADS} 
for the i.i.d. conductances satisfying \eqref{P}. 
Observe also that our strategy strongly differs from the one in \cite{ADS}. The authors of \cite{ADS} first establish elliptic and parabolic Harnack inequalities from Sobolev inequalities, and 
then deduce heat kernel bounds. We approach the problem the other way around: we shall first establish Theorem \ref{Th} using probabilistic arguments (in particular percolation estimates) 
and deduce the Harnack inequality from Theorem \ref{Th}. 
\end{remarks}

The organization of the paper is as follows. 
The proofs of the results in Part I and II are given in Sections \ref{sec2} and \ref{sec3w}
respectively. The key tool in the proof of Theorem~\ref{Th} (the main theorem in Part II) is Proposition \ref{ll}, and its proof is given in Section \ref{twc}. 
The proof of Proposition \ref{ll} requires some preliminary percolation results and spectral gap estimates, which are 
given in Section~\ref{sec:perco} and \ref{sec:sg} respectively. 
Some relatively standard proof is given in Appendix (Section \ref{appen}) for completeness.

\section{\bf Proof of the results in Part I}\label{sec2}

In the following three sections, we prove results in Part I. 
We first give a preliminary lemma. 

\begin{lemma}\label{Lem:3.1}
(i) Assume Assumption \ref{asmp:1-1asp} (i), (iv). Then 
there exists $c_1>0$ and $R_7(x_0)\in [1,\infty)$ such that 
\begin{equation}\label{eq:upimportant831}
E^y[\tau_{B(x,r)}]\le c_1 r^2,
\end{equation}
for all $x\in B(x_0,r^{1+\delta}/2)$, all $y\in G$ and all $r\ge R_7(x_0)$.\\
(ii) Assume Assumption \ref{asmp:1-1asp} (i), (ii), (iv).
Then there exist $c_2>0, p\in (0,1)$ and $R_8(x_0)\in [1,\infty)$ such that 
\begin{equation}\label{eq:upimport234}
P^x(\tau_{B(x,r)}\le t)\le p+c_2 t/r^2,
\end{equation}
for all $x\in B(x_0,r^{1+\delta})$, $t \ge 0$ and all $r\ge R_8(x_0)$. 
\end{lemma}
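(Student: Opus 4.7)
The plan is a standard two-step program. For part (i) I would prove the uniform tail bound $P^z(\tau_B > C r^2) \le 1/2$ for $z \in B = B(x,r)$ and a suitable large constant $C$, then iterate by the Markov property. For part (ii) the lower bound on $E^x[\tau_B]$ from Assumption \ref{asmp:1-1asp}(ii), combined with the upper bound from part (i), plugs into the standard identity obtained by applying the strong Markov property at time $t \wedge \tau_B$.

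For part (i), since $\tau_B = 0$ whenever $y \notin B$, I would assume $y \in B$. For $z \in B$ and $t > 0$, the inclusion $\{\tau_B > t\} \subset \{X_t \in B\}$ yields
\[
P^z(\tau_B > t) \;\le\; \sum_{w \in B} p_t(z,w)\,\theta(w).
\]
Taking $t = C r^2$ with $C \ge 1$ to be fixed: for $x \in B(x_0, r^{1+\delta}/2)$ and $r$ large, every $z, w \in B$ lies in $B(x_0, r^{1+\delta}) \subset B(x_0, t^{(1+\delta)/2})$, so Assumption \ref{asmp:1-1asp}(i) gives $p_t(z,w) \le c\, t^{-d/2}$, while Assumption \ref{asmp:1-1asp}(iv) gives $\theta(B) \le c'\, r^d$. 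Substituting,
\[
P^z(\tau_B > C r^2) \;\le\; c\, c'\, C^{-d/2}.
\]
Choose $C$ large enough that the right-hand side is at most $1/2$. The Markov property applied at the times $k C r^2$ then gives, for every starting point $y \in G$,
\[
P^y(\tau_B > k C r^2) \;\le\; 2^{-k}, \qquad k \ge 0,
\]
and integrating the tail yields $E^y[\tau_B] \le 2 C r^2$, establishing (i).

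For part (ii), the strong Markov property applied at $t \wedge \tau_B$ provides
\[
E^x[\tau_B] \;\le\; t + P^x(\tau_B > t)\,\sup_{y \in B} E^y[\tau_B].
\]
Part (i) bounds the supremum by $c_1'\, r^2$, while Assumption \ref{asmp:1-1asp}(ii) supplies $E^x[\tau_B] \ge c_2 r^2$; in the VSRW case the hypothesis is phrased in terms of $\tilde B$, so I would first use Assumption \ref{asmp:1-1asp}(v) to obtain $\tilde B(x, c_5 r) \subset B(x, r)$ and transfer the lower bound. Rearranging,
\[
P^x(\tau_B > t) \;\ge\; \frac{c_2 r^2 - t}{c_1'\, r^2} \;=\; \frac{c_2}{c_1'} - \frac{t}{c_1'\, r^2},
\]
and taking complements yields the claim with $p = 1 - c_2/c_1' \in (0,1)$.

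The main technical nuisance is bookkeeping rather than ideas: one has to verify that $B(x,r)$ lies inside $B(x_0, t^{(1+\delta)/2})$ uniformly for all times $t$ of order $r^2$ so that the on-diagonal heat kernel bound from Assumption (i) genuinely applies as $z, w$ range over $B$, and in the VSRW case one must pass carefully between the two metrics $d$ and $\tilde d$ via Assumption (v). The thresholds $R_7(x_0)$, $R_8(x_0)$ will emerge as explicit functions of those appearing in Assumption \ref{asmp:1-1asp}.
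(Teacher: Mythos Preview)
Your proposal is correct and matches the paper's proof essentially line for line: for (i) the paper also bounds $P^y(X_t\in B)\le c t^{-d/2}\theta(B)\le 1/2$ at $t=c_* r^2$ using Assumption~\ref{asmp:1-1asp}(i),(iv), then iterates by the Markov property; for (ii) it uses the same inequality $c_2 r^2\le E^x[\tau]\le t+c r^2 P^x(\tau>t)$ and rearranges. Your remark about invoking Assumption~\ref{asmp:1-1asp}(v) in the VSRW case to pass from the $\tilde d$-ball to the $d$-ball is in fact more careful than the paper, which glosses over this point (and strictly speaking goes beyond the hypotheses (i),(ii),(iv) stated for part~(ii), though (v) is available everywhere the lemma is applied).
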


\begin{proof}
(i) Let $R_7(x_0):=T_0^{1/2}(x_0)\vee R_2(x_0)$. 
For $R>R_7(x_0)$ and $x\in B(x_0, R^{1+\delta})$, if $y,z\in B(x, R)$ and 
$t=c_*R^2$ where $c_*\ge 4$ is chosen later, we have
$x,y,z\in B(x_0, 2R^{1+\delta})\subset B(x_0, t^{(1+\delta)/2})$ and $t\ge T_0$. Thus, by Assumption \ref{asmp:1-1asp} (i), (iv), we have 
\[ P^y(X_t\in B(x,R))=
\sum_{z\in B(x,R)} p(t,y,z)\theta(z)\leq c_1t^{-d/2} \theta(B(x,R)) \leq 
c_1 c_4t^{-d/2}R^d\le \frac 12, \]
where we chose $c_*^{d/2}\ge 2c_1 c_4$. This implies 
\[ P^y(\tau_{B(x,R)}> t)\leq \frac 12. \] 
By the Markov property, for $m$ a positive integer
\[ P^y(\tau_{B(x,R)}> (m+1)t)\leq E^y[P^{Y_{mt}}(\tau_{B(x,R)}> t): \tau_{B(x,R)}>mt]\leq \frac 12 P^y(\tau_{B(x,R)}>mt). \]
By induction,
\[ P^y(\tau_{B(x,R)}>mt)\leq 2^{-m}, \]
and we obtain $E^y[\tau_{B(x,R)}]\le cR^2$.
When $y\notin B(x, R)$, clearly $E^y[\tau_{B(x,R)}]=0$, so the result follows. 
\\
(ii) Write $\tau=\tau_{B(x,r)}$. Using (i) and Assumption \ref{asmp:1-1asp} (ii), we have
\[
c_2r^2\le E^x[\tau]\le t+E^x[1_{\{\tau> t\}}E^{X_t}[\tau]]\le t+c r^2P^x(\tau> t)
\le t+c r^2(1-P^x(\tau\le t)),
\]
for $x\in B(x_0,r^{1+\delta})$, $r\ge R_0(x_0)\vee R_7(x_0)=:R_8(x_0)$. 
Rewriting, we have 
\[P^x(\tau\le t)\le 1-c_2/c + t/(c r^2),\]
and \eqref{eq:upimport234} is proved.  \end{proof} 

The following lemma is from \cite[Lemma 1.1]{baba}.
\begin{lemma}\label{lem:3.2}
Let $\{\xi_i\}_{i=1}^m, H$ be non-negative random variables such that $H\ge \sum_{i=1}^m\xi_i$. 
If the following holds for some $p\in (0,1)$, $a>0$, 
\[
P(\xi_i\le t|\sigma (\xi_1,\cdots , \xi_{i-1}))\le p+at,\,\qquad~t>0,
\]
then
\[
\log P(H\le t)\le 2(amt/p)^{1/2}-m\log (1/p). 
\]
\end{lemma}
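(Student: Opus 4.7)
\textbf{Proof plan for Lemma \ref{lem:3.2}.} The plan is a standard exponential-Markov (Laplace transform) argument combined with a one-parameter optimization, which is natural because the hypothesis bounds the conditional lower tail of each $\xi_i$ linearly near $0$. Since $H \ge \sum_{i=1}^m \xi_i$, the event $\{H \le t\}$ is contained in $\{\sum_{i=1}^m \xi_i \le t\}$, so for every $\lambda > 0$ the exponential Chebyshev inequality gives
\[
P(H \le t) \;\le\; e^{\lambda t}\, E\!\left[\exp\!\Bigl(-\lambda \sum_{i=1}^m \xi_i\Bigr)\right].
\]

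The first key step is to estimate the conditional Laplace transform $E[e^{-\lambda \xi_i}\mid \mathcal{F}_{i-1}]$, where $\mathcal{F}_{i-1} = \sigma(\xi_1,\dots,\xi_{i-1})$. Using the layer-cake identity $E[e^{-\lambda \xi_i}\mid \mathcal{F}_{i-1}] = \int_0^\infty \lambda e^{-\lambda s} P(\xi_i \le s \mid \mathcal{F}_{i-1})\, ds$ and plugging in the hypothesis $P(\xi_i \le s\mid \mathcal{F}_{i-1}) \le p + as$, we obtain
\[
E[e^{-\lambda \xi_i}\mid \mathcal{F}_{i-1}] \;\le\; p + \frac{a}{\lambda}.
\]
Iterating by the tower property over $i=m,m-1,\dots,1$, we then get
\[
E\!\left[\exp\!\Bigl(-\lambda \sum_{i=1}^m \xi_i\Bigr)\right] \;\le\; \Bigl(p + \tfrac{a}{\lambda}\Bigr)^m.
\]

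The second step is to take logarithms and optimize. Using $\log(p + a/\lambda) = \log p + \log(1 + a/(\lambda p)) \le \log p + a/(\lambda p)$, we obtain
\[
\log P(H \le t) \;\le\; \lambda t - m\log(1/p) + \frac{ma}{\lambda p}.
\]
The function $\lambda \mapsto \lambda t + ma/(\lambda p)$ is minimized at $\lambda_\star = \sqrt{ma/(tp)}$, and substituting $\lambda_\star$ yields $\lambda_\star t + ma/(\lambda_\star p) = 2\sqrt{mat/p}$, giving the claimed bound $\log P(H \le t) \le 2(mat/p)^{1/2} - m\log(1/p)$.

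There is really no serious obstacle here: the only minor caution is to verify that the iterated conditioning works correctly — i.e.\ one pulls $e^{-\lambda \xi_m}$ inside $E[\,\cdot\mid \mathcal{F}_{m-1}]$, bounds it by $p + a/\lambda$, and repeats. The optimization in $\lambda$ is elementary. One should also check that $\lambda_\star > 0$ is admissible, which it is for any $t > 0$ (the case $t=0$ being trivial since the right-hand side diverges).
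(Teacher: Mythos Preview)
Your proof is correct. The paper does not give its own proof of this lemma but simply cites \cite[Lemma 1.1]{baba}; the argument there is precisely the Laplace-transform/Chernoff bound you outline: bound $E[e^{-\lambda\xi_i}\mid\mathcal F_{i-1}]\le p+a/\lambda$ via the layer-cake representation, iterate by conditioning to get $(p+a/\lambda)^m$, use $\log(1+x)\le x$, and optimize over $\lambda$. So your approach matches the intended one exactly.
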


Given Lemma \ref{Lem:3.1}, we have the following. 
\begin{proposition}\label{pro:3.3}
Assume Assumption \ref{asmp:1-1asp} (i), (ii), (iv), and let $\eps\in (0,\delta/(1+\delta))$. 
Then, there exist $c_1,c_2,c_3>0$ such that the following holds for $\rho, t>0$ that satisfy 
$\rho^{2-\eps}\le t$ and $t/\rho\ge c_1R_8(x_0)$;  
\begin{equation}\label{eq:pro33-ne}
P^x(\tau_{B(x,\rho)}\le t)\le c_2\exp (-c_3\rho^2/t),\,\qquad~\mbox{ for all } x\in B(x_0,\rho).
\end{equation}
\end{proposition}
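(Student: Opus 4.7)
The plan is to use the standard chaining trick: bound $\tau_{B(x,\rho)}$ from below by a sum of exit times from smaller balls, control each such exit time by Lemma~\ref{Lem:3.1}(ii), and then convert this into a Gaussian-type deviation estimate via Lemma~\ref{lem:3.2}.

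First I would define, for a parameter $r > 0$ to be chosen, the sequence of stopping times $\sigma_0 = 0$ and $\sigma_i = \sigma_{i-1} + \tau_{B(X_{\sigma_{i-1}},r)} \circ \theta_{\sigma_{i-1}}$, and set $\xi_i = \sigma_i - \sigma_{i-1}$. Since the displacement of the walk during $[\sigma_{i-1},\sigma_i]$ is at most $r$, we have $\tau_{B(x,\rho)} \ge \sum_{i=1}^m \xi_i$ whenever $mr \le \rho$; take $m = \lfloor \rho/(2r) \rfloor$. By the strong Markov property together with Lemma~\ref{Lem:3.1}(ii), for every $s \ge 0$,
\[
P^x\bigl(\xi_i \le s \,\big|\, \mathcal{F}_{\sigma_{i-1}}\bigr) \le p + c\, s/r^2,
\]
provided $X_{\sigma_{i-1}} \in B(x_0, r^{1+\delta})$ and $r \ge R_8(x_0)$. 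Since the walk stays in $B(x,\rho) \subset B(x_0, 2\rho)$ up to time $\tau_{B(x,\rho)}$, the first constraint reduces to $r \ge (2\rho)^{1/(1+\delta)}$.

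Next I would choose $r = \lambda t/\rho$ with a large universal constant $\lambda$, to be tuned at the end. Checking the two constraints on $r$: the bound $r \ge R_8(x_0)$ becomes $t/\rho \ge R_8(x_0)/\lambda$, which is our hypothesis $t/\rho \ge c_1 R_8(x_0)$ for an appropriate $c_1$; and the bound $r \ge (2\rho)^{1/(1+\delta)}$ becomes $t \ge c\, \rho^{(2+\delta)/(1+\delta)}$, which follows from $t \ge \rho^{2-\eps}$ because $\eps < \delta/(1+\delta)$ forces $2-\eps > (2+\delta)/(1+\delta)$ (for $\rho$ large, which is ensured by $t/\rho \ge c_1 R_8(x_0)$). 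Also, if $r \ge \rho$ (i.e. $t \ge \rho^2/\lambda$) then $\rho^2/t \le \lambda$ is bounded, so \eqref{eq:pro33-ne} is trivially valid with $c_2$ chosen large; thus one may assume $m \ge 1$.

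Now I would apply Lemma~\ref{lem:3.2} with $a = c/r^2$ and the $p$ above. Since $m \asymp \rho^2/(\lambda t)$, this yields
\[
\log P^x\bigl(\tau_{B(x,\rho)} \le t\bigr) \le 2\sqrt{\frac{cmt}{pr^2}} - m \log\frac{1}{p} \le \frac{\rho^2}{t}\left[\frac{C_1}{\sqrt{p\,\lambda^3}} - \frac{\log(1/p)}{2\lambda}\right].
\]
The bracket scales like $\lambda^{-3/2}$ in the positive term and $\lambda^{-1}$ in the negative term, so choosing $\lambda$ large enough (depending only on $p$ and $c$) makes it bounded above by a negative constant $-c_3$. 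This gives \eqref{eq:pro33-ne}.

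The one mildly delicate point is reconciling all the scaling constraints on $r$ simultaneously: Lemma~\ref{Lem:3.1}(ii) needs $r \ge R_8(x_0)$ and $r^{1+\delta} \ge 2\rho$, while the chaining needs $mr \le \rho$ with $m$ large enough that the negative drift in Lemma~\ref{lem:3.2} dominates. The precise role of the hypotheses $\rho^{2-\eps} \le t$ (with $\eps < \delta/(1+\delta)$) and $t/\rho \ge c_1 R_8(x_0)$ is exactly to make the choice $r = \lambda t/\rho$ admissible; once the bookkeeping is done, Lemma~\ref{lem:3.2} produces the Gaussian tail directly.
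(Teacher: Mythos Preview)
Your proposal is correct and follows essentially the same chaining argument as the paper: decompose $\tau_{B(x,\rho)}$ into a sum of exit times from balls of radius $r\asymp t/\rho$, apply Lemma~\ref{Lem:3.1}(ii) to each piece, and feed the result into Lemma~\ref{lem:3.2}. The only cosmetic difference is that the paper first chooses $m\asymp \rho^2/t$ and sets $r=\lfloor \rho/m\rfloor$, whereas you fix $r=\lambda t/\rho$ and set $m=\lfloor \rho/(2r)\rfloor$; the verification of the constraints $r\ge R_8(x_0)$ and $r^{1+\delta}\ge 2\rho$ from the hypotheses $t/\rho\ge c_1R_8(x_0)$ and $\rho^{2-\eps}\le t$ is identical in both.
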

\begin{proof}
The following argument has been often made for heat kernel upper bounds on fractals. We closely follow 
\cite[Proposition 3.7]{Barlow}.

Let $r=\lfloor \rho/m\rfloor\ge 1$ 
where $m\in {\mathbb N}$ is chosen later. Define inductively 
\[
\sigma_0=0,~~\sigma_i=\inf\{t>\sigma_{i-1}: d(X_{\sigma_{i-1}},X_t)=r\},~~r\ge 1.
\]
Let $\xi_i=\sigma_i-\sigma_{i-1}$ and let ${\mathcal F}_t=\sigma(X_s: s\le t)$ be the filtration of $X$. 
By Lemma \ref{Lem:3.1}, we have
\begin{equation}\label{eq:koele3-1}
P^x(\xi_i< u|{\mathcal F}_{\sigma_{i-1}})\le p+c_1u/r^2
\end{equation}
if $X_{\sigma_{i-1}}\in B(x_0, r^{1+\delta})$, $r\ge R_8(x_0)$ and $u\ge 0$. Note that
$d(x, X_{\sigma_m})=d(X_0, X_{\sigma_m})\le mr \le \rho$ so that 
$\sigma_m\le \tau_{B(x,\rho)}$ and $X_{\sigma_i}\in B(x,\rho)$ for $i=0,1,\cdots, m$. 
Using Lemma \ref{lem:3.2} with $a=c_1/r^2$, we obtain 
\begin{eqnarray}
\log P^x(\tau_{B(x,\rho)}\le t)&\le &\log P^x(\sigma_m \le t)\le 2(c_1mt/(pr^2))^{1/2}-m\log (1/p)\nonumber\\
&\le & -c_2m(1-(c_3tm/\rho^2)^{1/2})\label{eq:addonoe}
\end{eqnarray}
if 
\begin{equation}\label{eq:condobrv}
x\in B(x_0, r^{1+\delta}/2), ~~\rho\le r^{1+\delta}/2~~\mbox{ and }~~r\ge R_8(x_0).
\end{equation}
Let $\lambda=\rho^2/(2c_3t)$. If $\lambda\le1$, then \eqref{eq:pro33-ne} is immediate by adjusting 
$c_2$ in \eqref{eq:pro33-ne} appropriately, so we may assume $\lambda >1$. 
If we can choose $m\in {\mathbb N}$ with $\lambda/2\le m<\lambda$
and \eqref{eq:addonoe} 
hold, then we have the desired estimate. 
So let us now verify the conditions \eqref{eq:condobrv}. 
Set $m=\lfloor\lambda/2\rfloor+1
\in [\lambda/2,\lambda)$;
then since $m\ge 1$, we have $r\le \rho$. By definition, 
$r=\lfloor \rho/m\rfloor\ge c_4t/\rho$ for some $c_4>0$, so 
the assumption implies $r\ge c_5R_8(x_0)$. The assumption $\rho^{2-\eps}\le t$ and the fact 
$\eps\in (0,\delta/(1+\delta))$ implies (noting that one can choose 
$\rho\ge r$ large) $r^{1+\delta}>2\rho$. Since $x\in B(x_0,\rho)$, we have verified that
\eqref{eq:pro33-ne} holds.  
\end{proof}

Let $d_\theta(\cdot,\cdot)$ be a metric that satisfies 
\begin{equation}\label{eq:dtheas}
\theta_x^{-1}\sum_yd_\theta(x,y)^2\omega_{xy}\le 1 \,~~~\mbox{ for all }\,~x\in G,
\end{equation}
and 
$d_\theta(x,y)\le 1$ for all $x\sim y\in G$. 
The following estimates, which are generalizations of \cite[Corollary 11, 12]{Dav93},
are given in \cite[Theorem 2.1, 2.2]{Fol}. 

\begin{proposition} 
There exist $c_1,\cdots, c_4>0$ such that the following hold for $x,y\in G$; 
\begin{eqnarray}
p_t(x,y)&\le & \frac{c_1}{\sqrt{\theta_x\theta_y}}\exp\Big(-c_2d_\theta(x,y)^2/t\Big)\,~~~\mbox{ for }~~t>d_\theta(x,y),\label{eq:ketqq1}\\
p_t(x,y)&\le & \frac{c_3}{\sqrt{\theta_x\theta_y}}\exp\Big(-c_4d_\theta(x,y)(1\vee \log (d_\theta(x,y)/t))\Big)\,~~~\mbox{ for }~~t\le d_\theta(x,y). 
\label{eq:ketqq2}
\end{eqnarray}
\end{proposition}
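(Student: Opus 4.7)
The plan is to invoke the Davies method of exponential perturbation, in the discrete/graph form developed in \cite{Dav93} and \cite{Fol}. The starting point is the trivial $\ell^2(\theta)$-contractivity of $P_t=e^{t\mathcal{L}_\theta}$, which already gives $p_t(x,y)\le 1/\sqrt{\theta_x\theta_y}$ by Cauchy--Schwarz. The Gaussian/exponential factors will then be produced by twisting $P_t$ by a bounded Lipschitz weight built from $d_\theta$.

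More precisely, for a bounded $\psi:G\to\R$, consider the perturbed semigroup $P_t^\psi:=e^{-\psi}P_t\,e^{\psi}$ on $\ell^2(\theta)$. Differentiating $\|P_t^\psi f\|_{\ell^2(\theta)}^2$ and using the symmetry of $\omega$, one gets
\[
\frac{d}{dt}\|P_t^\psi f\|_{\ell^2(\theta)}^2 \le 2\Lambda(\psi)\,\|P_t^\psi f\|_{\ell^2(\theta)}^2,\qquad
\Lambda(\psi):=\sup_{x\in G}\theta_x^{-1}\sum_{y}\bigl(e^{\psi(y)-\psi(x)}-1\bigr)^2\omega_{xy},
\]
so $\|P_t^\psi\|_{\ell^2(\theta)\to\ell^2(\theta)}\le e^{t\Lambda(\psi)}$. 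Writing the heat kernel as a matrix element against point masses then yields
\[
p_t(x,y)\le \frac{1}{\sqrt{\theta_x\theta_y}}\,\exp\bigl(\psi(y)-\psi(x)+t\Lambda(\psi)\bigr).
\]
I would then plug in $\psi(z):=\lambda\bigl(d_\theta(z,y)\wedge d_\theta(x,y)\bigr)$ for a parameter $\lambda>0$ to be optimized. Because $d_\theta(z,w)\le 1$ for $z\sim w$, the increments of $\psi$ along edges are bounded by $\lambda$; using the elementary inequality $(e^s-1)^2\le s^2 e^{2|s|}$ together with the triangle inequality and hypothesis \eqref{eq:dtheas} gives
\[
\Lambda(\psi)\le e^{2\lambda}\,\theta_x^{-1}\sum_{y}\bigl(\psi(y)-\psi(x)\bigr)^2\omega_{xy}\le \lambda^2 e^{2\lambda}.
\]
Since $\psi(x)-\psi(y)=\lambda\,d_\theta(x,y)$, the previous display becomes
\[
p_t(x,y)\le \frac{1}{\sqrt{\theta_x\theta_y}}\exp\!\Bigl(-\lambda\,d_\theta(x,y)+t\lambda^2 e^{2\lambda}\Bigr).
\]

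It remains to optimize over $\lambda>0$. In the regime $t>d_\theta(x,y)$, choosing $\lambda$ of order $d_\theta(x,y)/t$ (small, so $e^{2\lambda}=O(1)$) minimizes the exponent at $-c\,d_\theta(x,y)^2/t$, producing \eqref{eq:ketqq1}. In the regime $t\le d_\theta(x,y)$ one cannot keep $\lambda$ small; balancing the linear term against $t\lambda^2 e^{2\lambda}$ suggests taking $\lambda\asymp 1\vee\log(d_\theta(x,y)/t)$, which yields the Poisson-type tail \eqref{eq:ketqq2}. The main delicate point in this plan is verifying that $\psi$ is genuinely $\lambda$-Lipschitz in $d_\theta$ (hence the truncation at $d_\theta(x,y)$), and that the gradient estimate \eqref{eq:dtheas} indeed turns the pointwise exponential increment bound into the clean $\lambda^2 e^{2\lambda}$ bound on $\Lambda(\psi)$; both are carried out in detail in \cite[Theorem~2.1, 2.2]{Fol}, so in the paper I would simply cite that reference.
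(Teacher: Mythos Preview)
Your proposal is correct and matches the paper's treatment: the paper does not prove this proposition at all but simply cites \cite[Theorem~2.1, 2.2]{Fol} (as a generalization of \cite[Corollary~11, 12]{Dav93}), which is precisely the reference you arrive at after sketching the Davies exponential--perturbation argument. Your sketch of the method is accurate and adds helpful context, though the paper omits it entirely.
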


We are now ready to prove Proposition \ref{HKupperd}. 

\begin{proofsect}{Proof of Proposition \ref{HKupperd}}~~~
We first consider CSRW, namely $\theta_x=\pi(x)$. In this case the graph distance 
$d(\cdot, \cdot)$
satisfies the condition of $d_\theta$ in \eqref{eq:dtheas}. 
Write $D=d(x,y)$ and $R=d(x_0,x)$.

Case 1: Consider first the case $D^{2-\eps}\ge t$. By \eqref{eq:ahoeb1},
we have $c_1D\ge R_*(x_0)$, and by \eqref{eq:ahoeb2}, $R\le c_1D$. So
\[
d(x_0,y)\le d(x_0,x)+d(x,y)=R+D\le (c_1+1)D.
\]
Substituting $(c_1+1)D$ to $R$ in Assumption \ref{asmp:1-1asp} (v), we have $\min_{x\in B(x_0, (c_1+1)D)}
\pi(x)\ge c_2 D^{- \kappa}$ if $(c_1+1)D\ge R_3(x_0)$, so taking 
$R_*(x_0)\ge c_1R_3(x_0)/(c_1+1)$ and plugging this into \eqref{eq:ketqq1} and \eqref{eq:ketqq2} gives the desired estimates by noting 
\[
D^\kappa t^{d/2}\le D^{\kappa+d(2-\eps)/2}\le c_3\exp (c_4D^{\eps})\le c_3\exp (c_4D^2/t),~~~\mbox{ for }~
D^{2-\eps}\ge t,
\]
with $c_4>0$ smaller than $c_2/2$ in \eqref{eq:ketqq1}.

\indent Case 2: Consider the case $D^{2-\eps}< t$ and
let $\rho=\lfloor D/2\rfloor+1$ if $D\ge 1$, $\rho=0$ if $D=0$. Note that $d(x_0,y)\le (2D)\vee (2R)$. 
By \eqref{eq:ahoeb1}, $R_*(x_0)\le c_1t^{1/(2-\eps)}$. 
Also by \eqref{eq:ahoeb2}, $R\le c_1t^{1/(2-\eps)}$, so that $d(x_0,y)\le c_5t^{1/(2-\eps)}
<t^{(1+\delta)/2}$ by the choice of $\eps$.  
Since $D^{2-\eps}< t$, $(t/2)/\rho> c_6t^{(1-\eps)/(2-\eps)}$, which is larger than 
$c_6(R_*(x_0)/c_1)^{1-\eps}$. So the assumption for Proposition \ref{pro:3.3} is satisfied by 
choosing $R_*(x_0)\ge c_*R_8(x_0)^{1/(1-\eps)}$ for large $c_*>0$. 
Let $A_x=\{z\in G: d(x,z)\le d(y,z)\}$ and $A_y=G\setminus A_x$. Then
\begin{eqnarray}
p_t(x,y)&=&P^x(X_t=y, X_{t/2}\in A_y)/\theta_y+P^x(X_t=y, X_{t/2}\in A_x)/\theta_y\nonumber\\
&=&P^x(X_t=y, X_{t/2}\in A_y)/\theta_y+P^y(X_t=x, X_{t/2}\in A_x)/\theta_x
=:I+II.~~\mbox{ }~~~\label{eq:symptf}
\end{eqnarray}
Write $\tau=\tau_{B(x,\rho)}$. Then  
\begin{eqnarray*}
I=P^x(X_t=y, X_{t/2}\in A_y)/\theta_y&=&
P^x(\tau<t/2, X_t=y, X_{t/2}\in A_y)/\theta_y\\
&\le & P^x(1_{\{\tau<t/2\}}P^{X_\tau} (X_{t-\tau}=y))/\theta_y\\
&\le & P^x(\tau<t/2)\sup_{z\in \partial B(x,\rho), s< t/2}p_{t-s}(z,y)\\
&\le & c_7\sup_{z\in \partial B(x,\rho), s< t/2}p_{t-s}(z,y)\exp(-c_8D^2/t),
\end{eqnarray*}
where Proposition \ref{pro:3.3} is used in the last inequality. 
Noting that $d(x_0,z)\le R+\rho\le c_8t^{1/(2-\eps)}<t^{(1+\delta)/2}$, 
we obtain $I\le c_9t^{-d/2}\exp(-c_{8}D^2/t)$. 
$II$ can be bounded similarly, so that we obtain \eqref{eq:ketsur1}. 

We next discuss the VSRW case (i.e. $\theta_x=1$) briefly. In this case the metric 
$\tilde d(\cdot, \cdot)/\sqrt M$, where $M$ is the maximum degree of the vertices,   
is relevant; indeed it satisfies the condition of $d_\theta$ in \eqref{eq:dtheas}. 
So the conclusion (w.r.t. $\tilde d$) holds 
if \eqref{eq:ahoeb1} and \eqref{eq:ahoeb2} hold w.r.t. $\tilde d$. 
Using Assumption \ref{asmp:1-1asp} (v), it is easy to verify that 
\eqref{eq:ahoeb1} and \eqref{eq:ahoeb2} w.r.t. $d$ imply  
\eqref{eq:ahoeb1} and \eqref{eq:ahoeb2} w.r.t. $\tilde d$. Finally 
let us deduce \eqref{eq:ketsur1} and \eqref{eq:ketsur2} for $d$ from those for $\tilde d$.
When $t\ge \tilde d(x,y)^2$, \eqref{eq:ketsur1} is an on-diagonal estimate, so no distance appears there.
When $t< \tilde d(x,y)^2$, \eqref{eq:ahoeb1} for $\tilde d$ implies $R_*(x_0)\le c_1\tilde d(x,y)^{1/(1-\eps)}$,
so by taking $(R_*(x_0)/c_3)^{1-\eps}\ge R_4(x_0)$, we can apply Assumption \ref{asmp:1-1asp} (v) (since
$\tilde d(x,y)\le d(x,y)$) and deduce \eqref{eq:ketsur1} and \eqref{eq:ketsur2} for $d$ from those for $\tilde d$. 
Thus the desired estimates are established. 
\end{proofsect}
\begin{remark}
A Gaussian off-diagonal upper bound similar to \eqref{eq:ketsur1} in Proposition \ref{HKupperd} can sometimes be 
deduced from the on-diagonal upper bound using the strategy initiated by Grigor'yan for manifolds \cite{Grig} and developed in \cite{CGZ,Fol,xchen}  for the graph setting. Indeed, motivated by the present paper, the author of \cite{xchen} included in the latest version of his preprint,  
stronger statements (than in the first version of 
the preprint) 
on getting Gaussian off-diagonal upper bounds 
from the on-diagonal decay of the return probabilities at both end-points. With this revised version, one can obtain 
Proposition \ref{HKupperd} as well, but we think it is still worth providing a complete proof of Proposition \ref{HKupperd} 
based on our different approach. 
\end{remark}

\begin{proofsect}{Proof of Corollary \ref{thm:corhke}}
It is easy to check \eqref{eq:ahoeb1} and \eqref{eq:ahoeb2}, so we can apply Proposition \ref{HKupperd}.
If $s\ge R$, then the result follows directly from \eqref{eq:ketsur1}. If $s<R$, then \eqref{eq:ketsur2}
implies 
\[
p_s(x,y)\le c_1\exp\Big(-c_2R(1\vee \log (R/s))\Big)\le c_1\exp(-c_2R)\le c_3R^{-d},
\]
so the result holds.  
\end{proofsect}

We next prepare some propositions in order to prove Proposition \ref{Pr:13.1}.
The idea of the proof is based on that of \cite[Theorem 3.1]{gt1}. 

A function $u$ is said to be harmonic in a set $A\subset\Z^d$ if $u$ is defined in $\overline A$ (that consists of all points in $A$ and all their neighbors) and if $\LL u(x) = 0$ for any $x \in A$.

As a first step, we should check the elliptic oscillation inequalities. 
For any nonempty finite set $U$ and a function $u$ on $U$ , denote
$$\osc_{U}\, u := \max_U u - \min_U u.$$

\begin{proposition}
\label{Pr:11.1}
Assume Assumption \ref{asmp:1-1asp} (iii). Then, for any $\varepsilon > 0$, there exists $\sigma = \sigma(\varepsilon, C_E) < 1$ such that, for any $\sigma R> R_1(x_0)$
and for any function $u$ defined in $\bar B(x_0, R)$ and harmonic in $B(x_0, R)$, we have
\begin{equation}
\label{Pr:11.1-eq}
\osc_{B(x_0,\sigma r)}\, u \le \varepsilon \osc_{B(x_0,r)}\, u,\,
~~~~ \forall r\in (R_1(x_0)/\sigma, R/2].
\end{equation}
\end{proposition}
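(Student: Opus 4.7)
My plan is to derive Proposition~\ref{Pr:11.1} by the classical Moser--De Giorgi passage from Harnack to oscillation decay: first I would prove a single-scale inequality
\[
\osc_{B(x_0, r/2)} u \le \eta \, \osc_{B(x_0, r)} u, \qquad \eta := \frac{C_E - 1}{C_E + 1} < 1,
\]
directly from the elliptic Harnack inequality (H), and then iterate at dyadic scales $k$ times, choosing $k$ with $\eta^k \le \varepsilon$ and setting $\sigma := 2^{-k}$.

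For the single-scale step, fix $r \ge R_1(x_0)$ and set $M := \max_{\bar B(x_0, r)} u$, $m := \min_{\bar B(x_0, r)} u$. Since constants are harmonic for $\mathcal{L}_\theta$, the functions $M - u + \delta$ and $u - m + \delta$ are positive and harmonic on $B(x_0, r)$ for every $\delta > 0$. Applying (H) at radius $r$ to each and letting $\delta \downarrow 0$ gives, with $M' := \sup_{B(x_0, r/2)} u$ and $m' := \inf_{B(x_0, r/2)} u$,
\begin{align*}
M - m' &\le C_E (M - M'), \\
M' - m &\le C_E (m' - m).
\end{align*}
Adding these, the sum on the left is $(M - m) + (M' - m')$ while the right is $C_E\big[(M - m) - (M' - m')\big]$; rearranging yields $(C_E + 1)(M' - m') \le (C_E - 1)(M - m)$, i.e.\ the one-step decay with ratio $\eta$.

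For the iteration, given $\varepsilon > 0$ pick $k \in \N$ so that $\eta^k \le \varepsilon$ and set $\sigma := 2^{-k}$. For $r \in (R_1(x_0)/\sigma, R/2]$ I would apply the single-scale inequality successively at radii $r,\, r/2,\, \ldots,\, r/2^{k-1}$. The smallest of these is $r/2^{k-1} = 2\sigma r > 2 R_1(x_0) \ge R_1(x_0)$, so (H) is legitimate at every step; also each ball involved is contained in $B(x_0, R/2) \subset B(x_0, R)$, where $u$ is harmonic. Composing the $k$ inequalities gives
\[
\osc_{B(x_0, \sigma r)} u \le \eta^k \osc_{B(x_0, r)} u \le \varepsilon \, \osc_{B(x_0, r)} u,
\]
as required. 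The proof is essentially classical, so the only mild obstacles are the bookkeeping of radii to make sure each invocation of (H) meets the threshold $R_1(x_0)$, and the elementary $\delta$-perturbation needed to apply (H) (stated for positive functions) to the merely nonnegative functions $M - u$ and $u - m$.
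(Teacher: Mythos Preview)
Your proof is correct and follows essentially the same classical Moser--De Giorgi route as the paper: apply (H) to the shifted functions $u-\min u$ and $\max u - u$, add the resulting inequalities to get a one-step oscillation decay with ratio $(C_E-1)/(C_E+1)$, and then iterate. The only cosmetic differences are that the paper uses a factor $3$ (comparing $\osc_{B_r}$ with $\osc_{B_{3r}}$ via (H) on $B_{2r}$, which cleanly keeps all oscillations over open balls) rather than your factor $2$, and that you are more careful than the paper in introducing the $\delta$-perturbation to ensure strict positivity.
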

\noindent The proof is standard. For completeness, we give the proof in Section \ref{appen}.

We write 
$$\bar E(x, R) := \max_{y\in B(x,R)} 
E^y[\tau_{B(x,R)}].$$ 
Then, under Assumption \ref{asmp:1-1asp} (i) and (iv), we have the following due to Lemma \ref{Lem:3.1}:  
\begin{equation}\label{eq:exitbiw}
\bar E (x,R) \le CR^2,\qquad
\forall x\in B(x_0,R), R\ge R_7(x_0).
\end{equation}

The next proposition can be proved similarly as \cite[Proposition 11.2]{gt2}. 
For completeness, we give the proof in Section \ref{appen}.

\begin{proposition}
\label{Pr:11.2}
Assume Assumption \ref{asmp:1-1asp} (iii) and let $R\ge R_1(x_0)$, $u$ be a function 
on $B(x_0, R)$ satisfying the equation $\LL u = f$ with zero boundary condition. Then, for any positive $r < R/2$ with $\sigma r\ge R_1(x_0)$,\begin{equation}
\label{11.2-eq}
\osc_{B(x_0,\sigma r)} u \le 2\big(\overline E (x_0,r) + \varepsilon \overline E(x_0,R)\big)  \max_{B(x_0,R)} |f|,
\end{equation}
where $\sigma$ and $\varepsilon$ are the same as in Proposition~\ref{Pr:11.1}.
\end{proposition}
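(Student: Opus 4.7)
The plan is to split $u$ into a Dirichlet--Poisson piece driven by $f$ and a harmonic remainder, then invoke the elliptic oscillation inequality of Proposition~\ref{Pr:11.1} on the remainder. I would fix an intermediate radius $\rho$ with $2r\le\rho\le R$---the natural choice being $\rho=2r$, so that Proposition~\ref{Pr:11.1} can be applied at scale $r$---and, using the Dynkin/Green-function identity $u(y)=-E^y[\int_0^{\tau_R}f(X_s)\,ds]$ together with the strong Markov property at $\tau_\rho:=\tau_{B(x_0,\rho)}$, write for $x\in B(x_0,\rho)$
\[
u(x) = -E^x\!\left[\int_0^{\tau_\rho}f(X_s)\,ds\right] + E^x[u(X_{\tau_\rho})] =: v(x)+h(x),
\]
so that $v$ solves $\mathcal{L}v=f$ on $B(x_0,\rho)$ with zero boundary data while $h$ is the harmonic extension to $B(x_0,\rho)$ of the values $u|_{\partial B(x_0,\rho)}$.

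Next I would estimate the two pieces separately. The Dynkin bound gives $|v(x)|\le E^x[\tau_\rho]\max|f|\le \bar E(x_0,\rho)\max|f|$ on $B(x_0,\rho)$, hence $\osc_{B(x_0,\sigma r)} v \le 2\bar E(x_0,\rho)\max|f|$. Applying the same Dynkin representation to $u$ itself on the full ball yields $|u|\le \bar E(x_0,R)\max|f|$ on $B(x_0,R)$, whence by the maximum principle $|h|\le \bar E(x_0,R)\max|f|$ on $B(x_0,\rho)$, and in particular $\osc_{B(x_0,r)} h \le 2\bar E(x_0,R)\max|f|$. Since $h$ is harmonic on $B(x_0,2r)$, Proposition~\ref{Pr:11.1} applied with its ``$R$'' taken as $2r$ and its ``$r$'' as $r$ (saturating the constraint $r\le R/2$; the hypothesis $\sigma r\ge R_1(x_0)$ ensures applicability) gives
\[
\osc_{B(x_0,\sigma r)} h \le \varepsilon\,\osc_{B(x_0,r)} h \le 2\varepsilon\,\bar E(x_0,R)\max|f|.
\]
Summing via $u=v+h$ on $B(x_0,\sigma r)$ then produces the desired oscillation bound.

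The main obstacle is bookkeeping the precise prefactor in front of $\bar E(x_0,r)$: the scheme above naturally yields $2(\bar E(x_0,2r)+\varepsilon\bar E(x_0,R))\max|f|$, while the statement has $\bar E(x_0,r)$. To convert $\bar E(x_0,2r)$ into $\bar E(x_0,r)$ I would invoke the quadratic exit-time estimate \eqref{eq:exitbiw}---which under Assumption~\ref{asmp:1-1asp} supplies $\bar E(x_0,2r)\le 4Cr^2$ and hence $\bar E(x_0,2r)\le C'\bar E(x_0,r)$---and absorb the multiplicative constant by slightly shrinking $\sigma$, equivalently by tightening $\varepsilon$ at the level of Proposition~\ref{Pr:11.1}. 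This final bookkeeping is routine; the substantive content of the proof is the decomposition above and the choice $\rho=2r$ that makes the harmonic remainder amenable to Proposition~\ref{Pr:11.1}.
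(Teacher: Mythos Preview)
Your decomposition $u=v+h$ (Poisson piece plus harmonic remainder) is exactly the paper's approach, and the estimates $|v|\le \bar E\cdot\max|f|$, $|u|\le \bar E(x_0,R)\max|f|$, and the maximum-principle bound on $h$ are all as in the paper. The only difference is that the paper takes the intermediate radius to be $r$, not $2r$: it solves $\mathcal L v=f$ with zero boundary data on $B(x_0,r)$ itself, so that $|v|\le \bar E(x_0,r)\max|f|$ directly, with no bookkeeping obstacle.

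Your concern that Proposition~\ref{Pr:11.1} requires harmonicity on $B(x_0,2r)$ to yield the decay from $B_r$ to $B_{\sigma r}$ comes from reading its statement too literally. The proof of Proposition~\ref{Pr:11.1} is just an iteration of the one-step Harnack oscillation inequality~\eqref{11.1-bis}, and this iteration, when run inside $B_r$, already gives $\osc_{B_{\sigma' r}} w\le \varepsilon\,\osc_{B_r} w$ for some (possibly smaller) $\sigma'$; since $\sigma$ in Proposition~\ref{Pr:11.2} is simply ``the $\sigma$ from Proposition~\ref{Pr:11.1}'', this is harmless. With $\rho=r$ the argument closes immediately and yields \eqref{11.2-eq} with the exact prefactor.

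By contrast, your proposed fix has two problems. First, Proposition~\ref{Pr:11.2} only assumes Assumption~\ref{asmp:1-1asp}\,(iii), whereas invoking \eqref{eq:exitbiw} drags in (i) and (iv). Second, the upper bound $\bar E(x_0,2r)\le 4Cr^2$ alone does \emph{not} give $\bar E(x_0,2r)\le C'\bar E(x_0,r)$: that would also require a matching lower bound $\bar E(x_0,r)\ge c r^2$, which comes from Assumption~\ref{asmp:1-1asp}\,(ii). So the detour through $\rho=2r$ is both unnecessary and, as written, leans on hypotheses the proposition does not assume. Take $\rho=r$ and the issue evaporates.
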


We now give some time derivative properties of the heat kernel. 

\begin{proposition}
\label{PR:12.1}
Let $A$ be a nonempty finite subset of $\Z^d$.\\
(i) Let $f$ be a function on $A$.
$$u_t(x) =  P^A_t f (x).$$
Then, for all $0 < s \le t$,
\begin{equation}
\label{12.1-eq}
\|\partial_t\, u_t \|_2 \le \frac1s \| u_{t-s}\|_2.
\end{equation}
(ii) For all $x, y \in A$,
\begin{equation}
\label{Pr:12.2-eq}
\left| \partial_t\, p^A_t(x,y) \right| \le \frac1s \sqrt{ p^A_{2v}(x,x)\, p^A_{2(t-s-v)}(y,y)} 
\end{equation}
for all positive $t,s,v$ such that $s+v\le t$.\\
(iii) Under Assumption \ref{asmp:1-1asp} (i), for all $x,y$ we have
\begin{equation}
\label{Pr:12.3-eq}
\left| \partial_t\, p_t^A (x,y) \right|\vee \left| \partial_t\, p_t (x,y) \right| \le C \,t^{-(\frac d2+1)},\quad 
\forall x,y\in B(x_0, t^{(1+\delta)/2}),\, t\ge T_0(x_0).
\end{equation}
\end{proposition}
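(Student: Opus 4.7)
The plan is to handle the three parts in sequence, exploiting the self-adjointness of $\mathcal{L}_\theta$ on $\ell^2(A,\theta)$ at every step.

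For part (i), I would observe that $\mathcal{L}^A_\theta$, the Laplacian with Dirichlet boundary on $A$, is self-adjoint and non-positive on the finite-dimensional space $\ell^2(A,\theta)$. Writing $u_t = P^A_t f = e^{t\mathcal{L}^A_\theta}f$ and using the semigroup identity $u_t = P^A_s u_{t-s}$, we have $\partial_t u_t = \mathcal{L}^A_\theta P^A_s u_{t-s}$. By the spectral theorem, $\|\mathcal{L}^A_\theta P^A_s\|_{\mathrm{op}} = \sup_{\lambda \ge 0}\lambda e^{-s\lambda} = 1/(es) \le 1/s$, whence $\|\partial_t u_t\|_2 \le (1/s)\|u_{t-s}\|_2$.

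For part (ii), decompose via the semigroup property: for $0 < v < t$,
\[
p^A_t(x,y) = \sum_{z\in A} p^A_v(x,z)\, p^A_{t-v}(z,y)\,\theta(z) = \langle p^A_v(x,\cdot),\, p^A_{t-v}(\cdot,y)\rangle_{\ell^2(A,\theta)}.
\]
Only the second factor depends on $t$, so $\partial_t p^A_t(x,y) = \langle p^A_v(x,\cdot),\, \partial_t p^A_{t-v}(\cdot,y)\rangle$. Viewing $z\mapsto p^A_r(z,y)$ as $P^A_r(\theta(y)^{-1}\mathbf{1}_{\{y\}})(z)$, part (i) with time $r = t-v$ and step $s$ (requiring $s+v \le t$) gives
\[
\|\partial_t p^A_{t-v}(\cdot,y)\|_2 \le \tfrac{1}{s}\,\|p^A_{t-v-s}(\cdot,y)\|_2.
\]
The symmetry $p^A_r(z,y) = p^A_r(y,z)$ and Chapman--Kolmogorov yield $\|p^A_r(\cdot,y)\|_2^2 = \sum_z p^A_r(y,z)p^A_r(z,y)\theta(z) = p^A_{2r}(y,y)$, and similarly $\|p^A_v(x,\cdot)\|_2^2 = p^A_{2v}(x,x)$. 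Cauchy--Schwarz then closes the estimate.

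For part (iii), specialize (ii) with $s = v = t/3$, so $t-s-v = t/3$ and
\[
|\partial_t p^A_t(x,y)| \le \tfrac{3}{t}\sqrt{p^A_{2t/3}(x,x)\, p^A_{2t/3}(y,y)}.
\]
Since killing on $\partial A$ only reduces probabilities, $p^A_r \le p_r$ pointwise, and Assumption \ref{asmp:1-1asp}(i) gives $p_{2t/3}(z,z) \le c(2t/3)^{-d/2}$ whenever $z \in B(x_0,(2t/3)^{(1+\delta)/2})$ and $2t/3 \ge T_0(x_0)$. Combining yields $|\partial_t p^A_t(x,y)| \le C t^{-d/2-1}$. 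The bound for the free heat kernel $p_t$ is identical: the spectral argument in (i) and the Chapman--Kolmogorov step in (ii) go through verbatim with $\mathcal{L}_\theta$ on $\ell^2(G,\theta)$.

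There is no serious conceptual difficulty here; the one bookkeeping nuisance is that Assumption \ref{asmp:1-1asp}(i) must be applied at time $2t/3$ on $B(x_0,(2t/3)^{(1+\delta)/2})$, which is slightly smaller than $B(x_0, t^{(1+\delta)/2})$. This is absorbed by shrinking $\delta$ (or equivalently enlarging $T_0(x_0)$) slightly, which is harmless since $\delta$ is a generic parameter in the assumption.
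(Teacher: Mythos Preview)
Your proposal is correct and follows essentially the same route as the paper: spectral decomposition (or equivalently functional calculus) for part~(i), Chapman--Kolmogorov plus Cauchy--Schwarz for part~(ii), and the specialization $s=v=t/3$ for part~(iii). The only cosmetic differences are that in~(i) the paper uses the inequality $\lambda_i s\le e^{\lambda_i s}$ applied coordinatewise rather than your operator-norm bound $\sup_\lambda \lambda e^{-s\lambda}=1/(es)$, and in~(iii) the paper obtains the free kernel estimate by letting $A\uparrow\Z^d$ rather than by rerunning the argument on $\ell^2(G,\theta)$; your observation about the slight mismatch between $B(x_0,t^{(1+\delta)/2})$ and $B(x_0,(2t/3)^{(1+\delta)/2})$ is a valid bookkeeping point that the paper glosses over.
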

\noindent The proof is an easy modification of the corresponding results in \cite{gt2} for discrete time. 
For completeness, we give the proof in Section \ref{appen}. 

We are now ready to prove Proposition \ref{Pr:13.1}. 

\begin{proofsect}{Proof of Proposition \ref{Pr:13.1}} 
Let $\varepsilon<1/2$ (we will impose some further bounds of $\varepsilon$ later). Let $R=(t/\varepsilon)^{1/2}$, 
$A = B(x_0,R)$ and for any $x \in B(x_0, \eps R)=B(x_0, (\eps t)^{1/2})$, introduce the function
$$u(y) := p^A_t (x,y).$$
First, we claim that $u(x)\ge c t^{-d/2}$ for large $t>0$. Let $B=B(x,\eps^{1/4}R)$; we choose $\eps$
small enough so that $B\subset A$. Using the Schwarz inequality, we have
\begin{eqnarray*} 
p_t^A(x,x)&\ge &p_t^B(x,x)\ge (\sum_zp_{t/2}^B(x,z)\theta_z)^2/\theta(B)
=(1-P^x(X_t\notin B))^2/\theta(B)\\
&\ge & (1-P^x \big( \tau_{B(x,\eps^{1/4}R)}\le t))^2/\theta(B)\ge 
(1-p-c_6\varepsilon^{1/2})^2/\theta(B)\ge c/\theta(B), 
\end{eqnarray*}
where \eqref{eq:upimport234} is used in the third inequality and we take $\varepsilon>0$ small enough.
(We take $R$ large so that $\varepsilon^{1/4}R\ge R_8(x_0)$.)  
So, using Assumption \ref{asmp:1-1asp} (iv), the claim follows. 

Now let us show that
\begin{equation}
\label{13.5}
| u(x) - u(y) | \le \frac c2 \,t^{-d/2}
\end{equation}
for all $y\in B(x_0, \eps R)$ so that $d(x,y)\le 2(\eps t)^{1/2}$, which would imply $u(y) \ge (c/2) t^{-d/2}$ and hence prove 
the desired result.

Noting that $x\in B(x_0, \eps R)\subset B(x_0,R)$, by Proposition~\ref{PR:12.1}\,(iii),
\begin{equation}
\label{max-deriv-hk}
\max_{y\in B(x_0,R)} \left| \partial_t\, p^A_t (x,y)\right| \le C\, t^{-(\frac d2+1)},\quad
\text{ for large $t$}.
\end{equation}
By Proposition~\ref{Pr:11.2}, we have, for any $0 < r < R/3$ and for some $\sigma \in (0,1)$,
\begin{equation}
\label{11.2-eq-bis}
\osc_{B(x_0,\sigma r)} u \le 2\big(\overline E (x_0,r) + \varepsilon^2 \overline E(x_0,R)\big)  \max_{y\in B(x_0,R)} \left| \partial_t\, p^A_t (x,y)\right|,
\end{equation}
for all $\sigma r\ge R_1(x_0)$ where $\varepsilon$ in Proposition~\ref{Pr:11.2}
is now written as $\varepsilon^2$.

Estimating $\max \left| \partial_t\, p^A_t (x,y)\right|$ by \eqref{max-deriv-hk} and using \eqref{eq:exitbiw}, we obtain, from \eqref{11.2-eq-bis},
$$\osc_{B(x_0,\sigma r)} u \le C\, \frac{r^2 + \varepsilon^2 R^2}{t^{d/2+1}},\quad \forall x\in B(x_0,r), 
~~~\text{$t,r$ large}.
$$

Choosing $r= \varepsilon R$ and noting $t ={\varepsilon} R^2$, we obtain
\begin{equation}
\label{13.8}
\osc_{B(x_0,\sigma r)} u \le 2C \varepsilon\, t^{-d/2}\le \frac c2 \,t^{-d/2}
\end{equation}
provided $\varepsilon\le c/(4C)$,
$x\in B(x_0,(\varepsilon t)^{1/2})=B(x_0,\varepsilon R)$ and $t$ large.

Note that
$$\sigma r = \sigma \varepsilon R = \sigma \varepsilon \left( \frac t\varepsilon \right)^{1/2}  = \sigma \sqrt\varepsilon\ t^{1/2} = \delta_0 t^{1/2},$$
where $\delta_0= \sigma \varepsilon^{1/2}$.  
Hence \eqref{13.8} implies \eqref{13.5}, which was to be proved.
\end{proofsect}

Let us briefly mention 
other proofs of the results in Part I. 

\noindent Proof of Theorem \ref{thm:PHI} is given in Section \ref{appen}.

\noindent 
\begin{proofsect}{Proof of Corollary \ref{thm:equiH}}  
Given Theorem \ref{thm:PHI}, 
the proof is standard and similar
to the proof of \cite[Corollary 4.2]{BGK}. (Given Theorem \ref{thm:PHI}, one can also modify the proof of \cite[Proposition 4.6]{ADS} and \cite[Proposition 3.2]{BH-LCLT}.) So we omit the proof.
\end{proofsect}

\noindent 
\begin{proofsect}{Proof of Proposition \ref{thm:LCLT}}  
 Given Corollary \ref{thm:equiH}, the proof is 
similar to \cite[Theorem 1.11]{ADS} and \cite[Theorem 4.2]{BH-LCLT}, so we omit it. 
\end{proofsect}

\section{\bf Proof of the results in Part II}\label{sec3w}

\subsection{\bf Strategy and proof of Theorem~\ref{Th}} \label{St-Pr}
We now discuss the strategy of the proof of Theorems~\ref{Th} and how one compares random walks with random conductances with random walks on percolation clusters. 

Choose a threshold parameter $\xi>0$ such that $\PP(\omega_b\geq \xi)>p_c(d)$ where $p_c (d)$ is the threshold percolation cluster. The i.i.d.\ nature of the probability measure~$\PP$ ensures that for $\PP$ almost any environment $\omega$, there 
exists a unique infinite cluster in the graph $(\Z^d,\E_d)$, that we denote by $\scrC^\xi = \scrC^\xi(\omega)$.

Provided $\xi$ is small enough, the complement of $\scrC^\xi$ in $\Z^d$, here denoted by $\scrH^\xi$, is a union of finite connected components that we will refer to as {\it holes}, see Lemma \ref{v-h}. Thus, by definition, holes are connected sub-graphs of the grid. Note that holes may contain edges such that $\omega_b\geq\xi$. 

Consider the following additive functional~:
\begin{equation}
\label{addf}
{A}(t)=\int^t_0\1_{\{X_s\in \scrC^\xi\}} \text ds. 
\end{equation}

We shall need to make a time change for the process $X$ to bring us back to the situation that we already know, namely random walks on an infinite percolation cluster.

Recall $A (t)$ from \eqref{addf} and let $A^{-1}(t)=\inf\{s; A (s)>t\}$ be its inverse. Define the corresponding time changed process
$$
X^\xi_t \deq X_{A^{-1}(t)} ,
$$
which is obtained by suppressing in the trajectory of $X$ all the visits to the
holes.

For the proof of Theorem~\ref{Th}, we need the fact that $X^\xi$ behaves in a standard way in almost any realization of the environment~$\omega$ (see for eg. \cite[Lemma 4.1]{QIP} or \cite[Theorem 4.5]{ABDH}). Recall that we use here the box $B_n = [ - n, n ]^d \cap \Z^d$.
\begin{lemma}
\label{Pr:sb-f}
There exists a constant $c_1$ such that $\PP$-a.s. and for $t$ large enough,
\begin{equation}
\label{sb-f}
\sup_y P^x_\omega ( X^\xi_t = y)\leq  c_1\, {t^{-d/2}}, 
\end{equation} 
for all $x\in B_t\cap \scrC^\xi$.
\end{lemma}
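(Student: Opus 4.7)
The plan is to identify $X^\xi$ as a continuous-time Markov chain on $\scrC^\xi$ whose conductances are uniformly elliptic along cluster edges, and then invoke the heat kernel upper bound for random walks on the supercritical percolation cluster.

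First I would verify that $X^\xi$ is indeed a Markov chain on $\scrC^\xi$. By the strong Markov property of $X$ and the fact that the clock $A(\cdot)$ only runs on $\scrC^\xi$, the jump rate from $x\in\scrC^\xi$ to $y\in\scrC^\xi$ is
\[
\hat\omega_{xy}=\omega_{xy}\,\1_{\{x\sim y\}}+\sum_{H}\sum_{\substack{z\in\partial H\\ z\sim x}}\omega_{xz}\,P^z_\omega\bigl(X\text{ exits }H\text{ at a neighbour of }y\bigr),
\]
where the outer sum runs over the finite holes $H\subset\scrH^\xi$ that touch $x$. The first term already gives $\hat\omega_{xy}\ge\xi$ whenever $x,y$ are neighbours in $\scrC^\xi$, so the induced walk is uniformly elliptic along cluster edges; the second term makes $X^\xi$ a random walk with non-nearest-neighbour jumps.

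Next, since $\xi$ is chosen so that $\PP(\omega_b\ge\xi)>p_\cc(d)$, standard subcritical percolation estimates applied to $\scrH^\xi$ give that the diameters of the holes have exponentially decaying tails. This ensures that, $\PP$-a.s., the total jump rate out of any $x\in\scrC^\xi$ is bounded, and that the long-range jumps of $X^\xi$ have uniformly decaying range. This places $X^\xi$ in the framework of a uniformly elliptic, effectively finite-range random walk on the supercritical percolation cluster, whose reversible measure $\theta^\xi$ is $\PP$-a.s.\ comparable (from above) to a constant.

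Finally I would apply Barlow's heat kernel upper bound \cite{Barlow} (in a form allowing such bounded-range jumps) to conclude that the transition density of $X^\xi$ satisfies $p^\xi_t(x,y)\le c\,t^{-d/2}$ $\PP$-a.s., uniformly in $y\in\scrC^\xi$ and in $x\in B_t\cap\scrC^\xi$ for $t$ large enough. Since $\theta^\xi$ is bounded above, this yields the stated bound on $P^x_\omega(X^\xi_t=y)$. Alternatively one can directly cite \cite[Lemma~4.1]{QIP} or \cite[Theorem~4.5]{ABDH}, where this statement is recorded in essentially the required form. The main obstacle is that Barlow's theorem is formulated for nearest-neighbour walks, whereas $X^\xi$ has long (though rapidly decaying) range jumps across holes; the technical work is either to adapt his Nash/Davies argument to absorb these extra jumps using the exponential control on hole sizes, or to extract the conclusion from the analogous arguments already carried out in \cite{QIP,ABDH}.
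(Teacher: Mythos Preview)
Your proposal is correct and lands exactly where the paper does: the paper gives no proof of this lemma at all, but simply cites \cite[Lemma 4.1]{QIP} and \cite[Theorem 4.5]{ABDH}, which you also identify as the direct route. Your description of the underlying mechanism (time-changed walk as a uniformly elliptic, effectively finite-range chain on $\scrC^\xi$, then Barlow-type bounds) is the content behind those references, so your ``alternative'' is in fact what the paper does.
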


The key tool in the proof of Theorem~\ref{Th} is the following control on the time spent by the process outside $\scrC^\xi$. 

Call $\tau_h$ the exit time of the random walk $X$ from $\scrH^\xi$; if $X_0 \notin \scrH^\xi$, then $\tau_h = 0$. 

\begin{proposition}
\label{ll}
$(1)$ Let $d \ge 2$ and choose $\varepsilon \in ( 0, 1)$. Then,  

\noindent
$(1)$ For the CSRW, for any $\gamma > \frac18 \frac{d}{d-1/2}$, there exist positive constants $\delta, \sigma$ and $c_1, \cdots, c_4$ such that 
for $\xi >0$ small enough, $\PP$-a.s. for all $x \in B(0, t^{(1+\delta)/2})$ and all  $t$ large enough, we have 
\begin{equation}
\label{ll1}
P^x_\omega (A (t) \leq \varepsilon\, t) \le c_1 e^{-  c_2 t^{\sigma}}, 
\end{equation}
and 
\begin{equation}
\label{ll2}
P^x_\omega \left( \tau_h \ge  t/2 \right) \le c_3 e^{-  c_4 t^{\sigma}}.
\end{equation}

\smallskip
\noindent
$(2)$ For the VSRW for any $\gamma > 1/4$, there exist positive constants $\delta', \sigma'$ and $c_5, \cdots, c_8$ such that 
for $\xi >0$ small enough, $\PP$-a.s. for all $x \in B(0, t^{(1+\delta')/2})$ and all  $t$ large enough, we have \begin{equation}
\label{ll3}
P^x_\omega (A (t) \leq \varepsilon\, t) \le c_5 e^{-  c_6 t^{\sigma'}}\qquad \hbox{and}\qquad 
P^x_\omega \left( \tau_h \ge  t/2 \right) \le c_7 e^{-  c_8 t^{\sigma'}}. 
\end{equation}
\end{proposition}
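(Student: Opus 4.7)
The plan is to reduce both claims to a single uniform upper bound on the expected hole-exit time,
$\sup_{x \in B(0, t^{(1+\delta)/2})} E^x_\omega[\tau_h] \le t^\eta$ for some $\eta = \eta(\gamma) < 1$,
and then to apply a standard Markov iteration for \eqref{ll2}--\eqref{ll3} and a renewal/ergodic concentration argument for \eqref{ll1}--\eqref{ll3}. The thresholds $\gamma > \tfrac18 \cdot \tfrac{d}{d-1/2}$ for CSRW and $\gamma > 1/4$ for VSRW will enter as precisely the values at which the worst single-trap strength in the relevant box can be made strictly sub-linear in $t$, matching the trap heuristic of Remark~(2).

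I would first fix $\xi$ so small that $\PP(\omega_e < \xi) < p_c(d)$; then $\scrH^\xi$ is sub-critical, and by standard sub-critical cluster tails from Section~\ref{sec:perco} combined with a Borel--Cantelli argument, $\PP$-a.s.\ every hole meeting $B(0, t^{(1+\delta)/2})$ has at most $C \log t$ vertices for $t$ large. The combinatorial heart of the argument is then a quantitative conductance estimate: using \eqref{P} and a union bound over candidate local trap configurations --- a single high-conductance edge surrounded by weak edges for CSRW, a single low-$\pi$ vertex for VSRW --- one shows that with high $\PP$-probability every such configuration in $B(0, t^{(1+\delta)/2})$ has trap strength at most $t^{\mu(1+\delta)/2}$ for any $\mu$ strictly less than $\mu_c := d/(\gamma(4d-2))$ (CSRW) or $\mu_c := 1/(2\gamma)$ (VSRW). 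The stated thresholds on $\gamma$ are exactly those for which $\mu_c < 2$, and choosing $\delta$ small then gives $\eta := \mu(1+\delta)/2 < 1$.

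Combined with the spectral-gap estimates from Section~\ref{sec:sg}, these inputs yield $E^x_\omega[\tau_h] \le t^\eta$ uniformly in $x \in B(0, t^{(1+\delta)/2})$; a standard Markov iteration --- $E^x_\omega[\tau_h] \le t^\eta$ implies $P^x_\omega(\tau_h \ge 2k t^\eta) \le 2^{-k}$ for each $k$ by the iterated Markov inequality --- then promotes this to the stretched-exponential tail $P^x_\omega(\tau_h \ge t/2) \le \exp(-c\, t^{1-\eta})$, which is \eqref{ll2}--\eqref{ll3} with $\sigma = 1-\eta$. For \eqref{ll1}--\eqref{ll3} I would use the tail bound on $\tau_h$ to bring the walk into $\scrC^\xi$ by time $t/2$, then partition $[\tau_h, t]$ into blocks of fixed length $T_0$ depending only on $\xi$; in each block the walk restricted to $\scrC^\xi$ is ergodic (via Lemma~\ref{Pr:sb-f} and Barlow's percolation results), so spends at least a constant fraction $c(\xi) > \varepsilon$ of the block in $\scrC^\xi$ with uniform positive probability, and a Chernoff bound over $\ge t/(2T_0)$ blocks delivers the stretched-exponential deviation estimate.

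The main obstacle, I expect, is the combinatorial step in the second paragraph: one must argue that the union bound saturates precisely at the trap configurations of Remark~(2), and that no secondary multi-edge trap can produce a worse exit time --- a delicate dimension-counting argument that differs between CSRW and VSRW and accounts for the gap between the two thresholds. The spectral-gap input and the Markov iteration are then comparatively standard.
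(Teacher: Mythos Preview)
Your approach to $\tau_h$ via a mean--exit--time bound and Markov iteration is correct, if slightly roundabout: the paper uses the spectral gap $\zeta_1 \ge n^{-\alpha}$ of Lemma~\ref{sg-h} directly in the eigenfunction expansion $P^x_\omega(\tau_h > t/2) = \sum_i e^{-\zeta_i t/2}\langle \1_{\scrH_n},\phi_i\rangle \phi_i(x)$ to get the stretched exponential in one step. Your combinatorial step is also essentially right, though phrased as \emph{ruling out} bad traps rather than (as in Lemmas~\ref{good path} and \ref{h-inj}) \emph{constructing} good paths; these are dual formulations of the same union bound, and both deliver the thresholds $\mu_c = d/(\gamma(4d-2))$ and $\mu_c = 1/(2\gamma)$.

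The genuine gap is in your argument for $A(t)$. A fixed block length $T_0$ cannot work: for large $t$ there are holes in $B(0,t^{(1+\delta)/2})$ with exit time of order $t^\eta$, and such a hole swallows $\sim t^\eta/T_0$ consecutive blocks entirely, so the event ``the block contributes $\ge c(\xi)T_0$ to $A$'' fails deterministically, not with probability $\le 1-p_0$. Taking $T_0 = t^{\eta'}$ with $\eta' > \eta$ leaves you with no usable independence: a Chernoff or Azuma bound would need a uniform lower bound on $E[A((k{+}1)T_0) - A(kT_0)\mid \mathcal F_{kT_0}]$ over \emph{all} starting points, including those inside a hole, and that is exactly the estimate you are trying to prove. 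The paper sidesteps the renewal structure entirely via Feynman--Kac: Chebyshev gives
\[
P^x_\omega(A(t)\le \varepsilon t)\le e^{\varepsilon\lambda t}\,E^x_\omega\!\big[e^{-\lambda A(t)};\,\tau_{B_n}>t\big] + P^x_\omega(\tau_{B_n}\le t),
\]
the second term is controlled by known big--box exit estimates, and the first is $e^{\varepsilon\lambda t}(\RR_n^t \1_{B_n})(x)$ for the semigroup of $\mathcal L_\theta - \lambda\,\1_{\scrC^\xi}$ with Dirichlet conditions outside $B_n$. Its bottom eigenvalue satisfies $\lambda_1 \ge n^{-\alpha}$ for some $\alpha<2$ (Lemma~\ref{vph}, proved by the same path argument as Lemma~\ref{sg-h} but with the extra term $\lambda\sum_{x\in\scrC_n} f(x)^2\theta(x)$ in the Rayleigh quotient), so choosing $\lambda$ of order $n^{-\alpha}$ makes $(\varepsilon\lambda - \lambda_1)t$ negative and of order $t^{1-\alpha(1+\delta)/2}$. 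The threshold on $\gamma$ is precisely what allows $\alpha<2$.
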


\medskip

\begin{proofsect}{Proof of Theorem~\ref{Th}} 
Let $X$ be the CSRW with conductances satisfying \eqref{P} and assume $\gamma > \frac18 \frac{d}{d-1/2}$. One can follow the same argument for the VSRW with $\gamma > 1/4$ and with the counting measure instead of $\pi$.

We start by reproducing here the same reasoning as in \cite{B2}. 
Let $n = t^{(1+\delta)/2}$ with $\delta$ as in Proposition~\ref{ll} and such that $\delta<1$. Assume first that $x$ belongs to $\scrC^\xi \cap B_n$.
Since the probability of return is decreasing, see for eg. \cite[Lemma 3.1]{B2}, we have
\begin{equation}
\label{fs-ineq}
P^x_\omega(X_t = x )\leq \frac{2}{t}\int^{t}_{t/2}P^x_\omega(X_v = x )\text dv=\frac{2}{t} E^x_\omega\left[\int^{t}_{t/2}\1_{\{X_v = x \}}\text dv\right].
\end{equation}

The additive functional $A ( \cdot )$ being a continuous increasing function of the time and null outside the support of the measure $\text{d} A (v)$, so 
taking $u=A (v)$ and noting that 
$A'(v)=\1_{\{X_v\in\scrC^\xi\}}$, we get 

\begin{eqnarray*}
E^x_\omega\left[\int^{t}_{t/2}\1_{\{X_v =  x \}}\text dv\right]
&=&
E^x_\omega\left[\int^{t}_{{t}/2}\1_{\{X_v = x \}} \1_{\{X_v\in\scrC^\xi\}}\text dv\right]
\\
&=&
E^x_\omega\left[\int^{A (t)}_{A (t/2)}\1_{\{X^\xi_u = x \}}\text du\right], 
\end{eqnarray*}
which is bounded by
$$
E^x_\omega\left[\int^{t}_{A (t/2)}\1_{\{X^\xi_u = x \}}\text du\right],
$$
since $A (t)\leq t$.

Therefore, for $\varepsilon\in(0,1)$
\begin{eqnarray*}
\begin{split}
& P^x_\omega(X_t = x )
\leq \frac{2}{t}E^x_\omega\left[\int^{t}_{A (t/2)}\1_{\{A (t/2) \geq  \varepsilon\, t/2\}}\1_{\{X^\xi_u = x \}}\text du\right]\\
& \qquad\qquad \qquad \qquad\qquad+ \frac{2}{t}E^x_\omega\left[\int^{t}_{A (t/2)}\1_{\{A (t/2) \leq \varepsilon\, t/2\}}\1_{\{X^\xi_u = x \}}\text du\right]\\
& \qquad \qquad\qquad\,\leq 
\frac{2}{t}\int^{t}_{\varepsilon t/2}P^x_\omega(X^\xi_u = x )\text du
+  \frac{2}{t} \int^{t}_{0}P^x_\omega(A (t/2) \leq \varepsilon\, t/2)\text du,
\end{split}
\end{eqnarray*}
and using Lemma~\ref{Pr:sb-f},
\begin{eqnarray}
\label{pb}
P^x_\omega(X_t = x )
&\leq&
\frac{2 c_1}{t}\int^{t}_{\varepsilon t/2}u^{-d/2}\text du + 2 P^x_\omega(A (t/2) \leq \varepsilon\, t/2)  \nonumber
\\
&\leq&\label{ls-ineq}
{2 c_1} ( 1 - (\varepsilon/2)^{1 - d/2} )\, {t^{- {d}/{2}}} + 2 P^x_\omega (A (t/2) \leq \varepsilon\, t/2),
\end{eqnarray} 
which by virtue of Proposition~\ref{ll} for $t$ large enough, is less than 
$$
c_2\, {t^{- {d}/{2}}} + 2 c_3\, e^{- c_4 t^\sigma}.
$$  
Since $\pi ( x ) > \xi$, we obtain that
$$
p^\omega_t ( x, x ) \le c_5\, t^{- d/2}.
$$
Then
Cauchy-Schwarz gives 
\begin{equation}
\label{hk-strcl}
p^\omega_t ( x, y ) \le \sqrt{p^\omega_t ( x, x ) p^\omega_t ( y, y) } \le c_6\, t^{- d/2}, 
\end{equation}
for any $x, y \in B \big( 0, t^{(1+ \delta)/2} \big) \cap \scrC^\xi$ and all $t$ large enough.

Recall $n = t^{(1+ \delta)/2}$. Suppose $x \in \scrH^\xi \cap B_n$ and $y \in \scrC^\xi \cap B_n$. Note that $x$ belongs to a hole with a size less than $( \log n)^c$ included in $B_{2n}$ (see Lemma~\ref{v-h} below). It implies that $X_{\tau_h} \in \scrC^\xi \cap B_{2n}$ if $X_0 =x$. Then the strong Markov property gives
\begin{equation}
P^x_\omega ( X_t = y ) \le P^x_\omega ( \tau_h > t/2 ) 
+ E^x_\omega \left( \1_{\{\tau_h \le t/2\}}\, P^{X_{\tau_h}}_\omega ( X_{t-\tau_h} = y ) \right)
\end{equation}
which, by \eqref{hk-strcl} and \eqref{ll2}, and for $t$ large enough, is less than
\begin{equation}
\label{hk-strwea}
c_3 e^{- c_4 t^{\sigma (1+\delta)/2}} + \max_{z \in \scrC^\xi \cap B_{2n}} \sup_{s \in [ t/2, t]} P^z_\omega ( X_s = y ) 
\le c_7\, t^{- d/2}\pi(y). 
\end{equation} 
Since $\pi ( y ) \ge \xi$, we deduce that 

\begin{equation}
\label{hk-1strongsite}
p^\omega_t (x, y)  \le c_{8} t^{- d/2}.
\end{equation} 
Using the reversibility, we also deduce that 
\begin{equation}
\label{hk-1strongsite'}
p^\omega_t (x,y)  \le c_{9} t^{- d/2}
\end{equation} 
whenever $y \in \scrH^\xi \cap B_n$ and $x \in \scrC^\xi \cap B_n$. 

Last, suppose $x, y \in \scrH^\xi \cap B_n$. The strong Markov property yields  
\begin{equation}
\frac{P^x_\omega ( X_t = y ) }{\pi (y)} \le \frac{P^x_\omega ( \tau_h > t/2 )}{\pi (y)} 
+ \frac{1}{\pi (y)}\, P^x_\omega \left( \1_{\{\tau_h \le t/2\}}\, P^{X_{\tau_h}}_\omega ( X_{t-\tau_h} = y ) \right),
\end{equation} 
which by \eqref{ll2} and \eqref{hk-1strongsite'} is less than
\begin{equation}
\frac{c_3}{\pi (y)} \, e^{- c_4 t^{\sigma (1+\delta)/2}} +  \max_{z \in \scrC^\xi \cap B_{2n}} \sup_{s \in [t/2, t]} p_s ( z, y)
\le 
\frac{c_3}{\pi (y)} \, e^{- c_4 t^{\sigma (1+\delta)/2}} + c\, t^{- d/2}.
\end{equation}
Since $1/\pi (y) \le n^c$ with a constant $c$ depending only on $d$ and $\gamma$ (cf. Lemma \ref{fontes-mathieu} below), the claim follows.
\end{proofsect}

The proof of Proposition \ref{ll}  is deferred to Section \ref{twc}. Section~\ref{sec:perco} contains some preliminary percolation results, followed by Section~\ref{sec:sg}, which provides some spectral gap estimates necessary to the proof of the proposition. 

Although the main strategy is close to the argument in Boukhadra \cite{B2}, note that the spectral gap estimates we prove here are sharper and their proof involves a much more detailed analysis of the geometry of the percolation cluster. 

 \subsection{\bf Proof of Theorem \ref{Fithm}}
\begin{proofsect}{Proof of Theorem \ref{Fithm}}
It is enough to check Assumption \ref{asmp:1-1asp} with $x_0=0$ and the hypothesis in 
Proposition \ref{thm:LCLT}.
\eqref{HKonup**} is a consequence of Theorem \ref{Th}. Assumption \ref{asmp:1-1asp} (ii) holds since it 
is true for the time changed process $X^\xi$  as in \cite[Proposition 4.7]{ABDH}. 
\eqref{EH} is proved in \cite[Theorem 7.3]{ABDH}.
Note that VSRW and CSRW share the same harmonic functions, so this fact can be used both of them. 
Assumption \ref{asmp:1-1asp} (iv) will be proved in Lemma \ref{lm:V} for the CSRW case (it is trivial 
for the VSRW case because the reference measure is a uniform measure). 
Assumption \ref{asmp:1-1asp} (v) for CSRW case is true because of Lemma~\ref{fontes-mathieu} below. 
For $n$ large enough (larger than some random integer), we have
$$
\min_{x \in B_n} \pi (x) \ge n^{- \kappa}\qquad \text{with}\quad \kappa > \frac{d}{\gamma},
$$
where $\gamma$ is the parameter that we see in the law of the environment~\eqref{P}.
Assumption \ref{asmp:1-1asp} (ii), (v) for VSRW case is obvious in this case because 
$\tilde d (\cdot,\cdot)=d(\cdot,\cdot)$ in our setting since $\omega_e\le 1$ for each edge.

The first hypothesis in Proposition \ref{thm:LCLT} holds by the law of large numbers, and the
second hypothesis is proved 
in \cite[Theorem 2.1]{BP} and \cite[Theorem 1.3]{QIP}.
\end{proofsect}

\section{\textbf{Percolation}}
\label{sec:perco}

This section contains percolation results necessary to the spectral gap estimates in the following section.

We consider the standard Bernoulli percolation model on the grid $\Z^d$: we independently assign to edges the value $1$ (open) and $0$ (closed) with probability 
$p$ and $q=1-p$. 
Let $\PP$ denote the product probability measure thus defined on $\{0,1\}^{\E_d}$. 
We assume $p$ is supercritical so that, for $\PP$ almost any environment $\omega$, there exists a unique infinite open cluster that we denote by $\scrC$. 
For $q$ 
small enough, the complement of $\scrC$ in $\Z^d$, denoted by $\scrH$, is a union of finite open clusters that are called \textit{holes}. 

Let $x\in\Z^d$ and let $\scrH_x$ be the (possibly empty) set of sites in the finite component of $\Z^d\setminus\scrC$ 
containing $x$.

\begin{lemma}
\label{v-h}
Let $d\ge2$. For $p$ sufficiently close to $1$, 
there exist constants $C < \infty$ and $c>0$ such that for all $n\ge1$
$$\PP ( \diam\, {\scrH}_0 > n ) \le C\, e^{- c n}.$$
Here `` $\diam$" is the diameter in the $|\,\cdot\,|_\infty-$distance on $\Z^d$.
\end{lemma}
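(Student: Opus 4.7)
\medskip
\noindent\textbf{Plan.} The statement is a classical fact in deeply supercritical Bernoulli percolation, and the plan is a Peierls-type contour argument exploiting that $1-p$ can be made arbitrarily small.

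The key geometric observation is the following: since $\mathcal{H}_0$ is a connected component of $\Z^d\setminus\mathcal{C}$, every edge $\{x,y\}$ with $x\in\mathcal{H}_0$ and $y\in\mathcal{C}$ must be closed — otherwise $x$ would be open-connected to the infinite cluster, forcing $x\in\mathcal{C}$ and contradicting $x\in\mathcal{H}_0$. Thus, if we consider the ``filled-in'' hole $\widehat{\mathcal{H}}_0$ obtained by taking $\mathcal{H}_0$ together with all the finite components of $\Z^d\setminus(\mathcal{H}_0\cup\mathcal{C})$ it encloses, then the edge boundary of $\widehat{\mathcal{H}}_0$ to its unbounded complementary component consists entirely of closed edges, and the exterior site-boundary lies entirely in $\mathcal{C}$.

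Next, I would invoke a standard topological lemma (the Peierls/Aizenman--Delyon--Souillard contour construction in $d=2$, and its generalization by Tim\'ar in $d\ge 3$): the exterior boundary of a finite connected subset of $\Z^d$ is $*$-connected, and if that subset has $\ell^\infty$-diameter larger than $n$, the boundary must cross at least $n$ hyperplanes transverse to a fixed coordinate axis, so it contains at least $cn$ closed edges (with $c=c(d)>0$) that are arranged in a $*$-connected fashion enclosing $0$. Denote by $N(k)$ the number of such admissible contours of size $k$ enclosing $0$; a classical counting bound gives $N(k)\le C\,k^{d}M(d)^k$ for a combinatorial constant $M(d)$.

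A direct union bound then yields
\[
\PP(\diam \mathcal{H}_0 > n)\;\le\;\sum_{k\ge cn} N(k)\,(1-p)^{k}\;\le\;\sum_{k\ge cn} C\,k^{d}\bigl(M(d)(1-p)\bigr)^{k},
\]
where we used that distinct edges give mutually independent closedness events. Choosing $p$ close enough to $1$ so that $M(d)(1-p)<1/2$ makes the series geometrically convergent and bounded by $Ce^{-c'n}$, which is the desired estimate. The main obstacle is purely geometric — implementing Step 2 (the $*$-connectedness and size lower bound on the contour) cleanly in dimensions $d\ge 3$ via Tim\'ar's theorem; once that is in place, Steps 1 and 3 are entirely routine.
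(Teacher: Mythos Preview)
Your Peierls/contour argument is correct and is the standard route to this estimate. The paper does not actually prove the lemma but defers to Lemma~3.1 of Biskup--Prescott~\cite{BP}; the argument underlying that reference is exactly a contour count of the kind you describe, and the paper invokes the same plaquette machinery explicitly in the proof of Lemma~\ref{nper}. Two minor remarks on your write-up. First, the filling-in step is unnecessary: any vertex adjacent to $\scrH_0$ but not in $\scrH_0$ must already lie in $\scrC$ (otherwise it would belong to the same connected component of $\Z^d\setminus\scrC$ as $0$), so $\Z^d\setminus\scrH_0$ is automatically connected and the Deuschel--Pisztora/Tim\'ar $*$-connectedness result applies directly to $\scrH_0$ without modification. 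Second, for the union bound to go through cleanly you should fix once and for all whether your contour is the $*$-connected exterior vertex boundary or the connected set of dual plaquettes; either choice works, but the probability factor (each element of the contour forces an independent closed edge) and the lattice-animal count $N(k)\le Ck\,M(d)^k$ should be phrased for the same object. With those cosmetic points addressed, the argument is complete.
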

\begin{proof} See Lemma~3.1 in \cite{BP}. 
\end{proof}

Recall $B_n = [-n,n]^d \cap \Z^d$ the ball in $\Z^d$ centered at $0$ and of radius $n$. We have the following lemma on the proportion of sites belonging to $\scrC$ in a box $B_n$.

\begin{lemma}
\label{density}
Let $\eta \in(0, 1)$. For $p$ sufficiently close to $1$, there exists constants $C < \infty$ and $c>0$ such that for all $n\ge1$ 
\begin{equation}\label{density-eq}
\PP \big(| B_n \cap \scrC  | \le \eta |B_n|\big) \le Ce^{- c n}.
\end{equation}  
\end{lemma}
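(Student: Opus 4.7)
The plan is to reduce this density estimate to a standard coarse-graining argument for supercritical Bernoulli percolation, using Lemma~\ref{v-h} as the local input. First I would note that the expected density is $\PP(0\in\scrC)=\theta(p)$, and that $\theta(p)\to 1$ as $p\to 1$: indeed $1-\theta(p)=\PP(\scrH_0\neq\emptyset)\le\PP(\diam\scrH_0\ge 1)+\PP(\text{single closed edge at }0)$, which can be made arbitrarily small by taking $p$ close to $1$. So I fix $p$ large enough that $\theta(p)>(1+\eta)/2$, giving a uniform gap between $\eta|B_n|$ and $E[|B_n\cap\scrC|]$; the task then reduces to exponential concentration of the density.

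For concentration I would use a block argument. Fix a large integer $L=L(p,\eta)$ and tile $B_n$ by $N\asymp(n/L)^d$ disjoint sub-cubes $\{C_i\}$ of side $L$, with concentric enlargements $C_i^+$ of side $3L$. Call $C_i$ \emph{nice} if every hole of $\scrH$ that intersects $C_i$ is contained in $C_i^+$, equivalently, has diameter at most $L$. Niceness of $C_i$ is then determined by the edges in $C_i^+$ alone, so these events have range of dependence $O(L)$, and by Lemma~\ref{v-h} together with a union bound over the $L^d$ sites of $C_i$,
$$\PP(C_i\text{ not nice})\le L^d\,Ce^{-cL},$$
which can be made smaller than any prescribed $\eps>0$ by choosing $L$ sufficiently large. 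Colouring the cubes by $3^d$ colours so that same-coloured $C_i^+$ are pairwise disjoint, the indicators $\1_{\{C_i\text{ not nice}\}}$ within a colour class are i.i.d.\ Bernoulli with mean $\le\eps$, and a Chernoff bound within each of the finitely many colour classes yields that the number of non-nice cubes in $B_n$ is at most $\delta N$ with probability $\ge 1-Ce^{-cN}\ge 1-Ce^{-cn}$, for $\delta$ as small as desired.

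On the complementary event, the sites of $B_n$ lying in non-nice cubes contribute at most $\delta|B_n|$ to $|B_n\setminus\scrC|$; and within a nice cube $C_i$, every site of $C_i\setminus\scrC$ belongs to a hole contained in $C_i^+$, hence is determined by edges in $C_i^+$. Applying a second Chernoff bound to the sum of such locally-dependent indicators (again via the $3^d$-colouring), and using that the per-site conditional probability of lying in a hole contained in $C_i^+$ is at most $1-\theta(p)+Ce^{-cL}$, one obtains
$$\PP\bigl(|B_n\setminus\scrC|\ge(1-\eta)|B_n|\bigr)\le Ce^{-cn},$$
provided the parameters are tuned so that $\delta$ and $1-\theta(p)+Ce^{-cL}$ are both less than $(1-\eta)/2$.

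The main obstacle I anticipate is the clean combination of the two concentration steps with a single consistent parameter choice. This is handled by first fixing $L$ large enough that $Ce^{-cL}<(1-\eta)/8$ and $\delta:=L^dCe^{-cL}$ has small Chernoff decay, then fixing $p$ close to $1$ so that $1-\theta(p)<(1-\eta)/8$. An alternative, perhaps cleaner, route is to invoke the Liggett--Schonmann--Stacey stochastic-domination lemma to dominate the field $(\1_{\{C_i\subset\scrC\}})_i$ from below by an i.i.d.\ Bernoulli product with parameter tending to $1$ as $p\to 1$, after which a single Chernoff bound suffices. Either way the exponential rate in $n$ is far from optimal (surface-order large deviations of Pisztora type give $e^{-cn^{d-1}}$), but the stated $e^{-cn}$ is all that is needed in the sequel.
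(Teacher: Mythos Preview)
Your coarse-graining approach has a real gap: the claim that ``niceness of $C_i$ is determined by the edges in $C_i^+$ alone'' is false. Holes are connected components of $\Z^d\setminus\scrC$, and membership in $\scrC$ is a \emph{global} property. Concretely, take the configuration in which every edge in $C_i^+$ is open. Whether the sites of $C_i$ then lie in $\scrC$ or in a large hole depends entirely on whether a closed cutset surrounds $C_i^+$ somewhere far away; in the first case $C_i$ is nice, in the second it is not, yet the edges in $C_i^+$ are identical. The same objection applies to your Liggett--Schonmann--Stacey alternative: the events $\{C_i\subset\scrC\}$ do not have finite range, so LSS does not apply directly. Repairing the block argument requires an honestly local notion of a ``good'' block (unique crossing cluster touching all faces, all other clusters small) and an argument that a percolating family of good blocks has its crossing clusters inside $\scrC$ --- this is Pisztora-style renormalization, considerably more than what you sketched.

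The paper bypasses this locality issue by a completely different and much shorter route: slice $B_n$ into $(2n+1)^{d-2}$ disjoint two-dimensional sub-lattices $\{\ell_1\}\times\cdots\times\{\ell_{d-2}}\times\Z^2$. Each slice has its own infinite cluster, and any site in a slice's infinite cluster automatically lies in the full $\scrC$; hence $|B_n\cap\scrC|$ dominates the sum of the two-dimensional densities. For $p$ close to $1$ one has $\theta_2(p)>\eta$, and the Durrett--Schonmann large-deviation estimate for $d=2$ gives $\PP(\text{slice density}\le\eta)\le Ce^{-cn}$. A union bound over the $O(n^{d-2})$ slices finishes the proof. The independence of distinct slices (they use disjoint edge sets) is what replaces your colouring trick, and the reduction to $d=2$ is what replaces your attempted localisation.
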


This estimate 
is sufficient for us, but we do not think it is optimal. 
The expected behavior would be an exponential decay in the perimeter of $B_n$ as in dimension $2$, \cite[Theorem 3] {durrett}.

\begin{proof}  Let $\theta_d(p)$ be the {bond percolation probability} in the grid $\Z^{d}$. Note that $\theta_d(p)$ tends to $1$ when $p\rightarrow1$ [cf. \cite{G}, Section~1.4]. Call $\scrC(\G)$ the infinite percolation cluster of a (sub) graph $\G\subseteq\Z^d$.

First note that $\PP-$a.s.

\begin{equation}
S_n \deq \sum_{x \in B_n} \1_{\{x \in\scrC\}} \ge \sum_{-n\le \ell \le n}\, \sum_{x \in \{\ell\}\times [-n,n]^{d-1}} \1_{x \in \scrC(\{\ell\}\times \Z^{d-1})} =\mathrel{\mathop:} \sum_{-n\le \ell \le n} S_n(\ell). 
\end{equation}

\noindent
Then repeating the operation we get
\begin{equation}
\label{sum}
S_n \ge \sum_{-n\le \ell_1,\ldots,\ell_{d-2}\le n} S_n(\ell_1,\ldots,\ell_{d-2}) 
\end{equation}

\noindent
with
$$S_n (\ell_1,\ldots,\ell_{d-2}) \deq \sum_{x \in \prod_{i=1}^{d-2} \{\ell_i\}\times [-n,n]^{2}} \1_{x \in \scrC(\{\ell_1\}\times\cdots\times\{\ell_{d-2}\}\times \Z^{2})}.$$
The sub-graphs $\{\ell_1\}\times\cdots\times\{\ell_{d-2}\}\times \Z^{2}$ are disjoint copies of $\Z^2$ in $\Z^d$.
 
Now set 

$$Y(\ell_1,\ldots,\ell_{d-2}) \deq S_n(\ell_1,\ldots,\ell_{d-2}) / (2n+1)^2.$$

\noindent 
Let $\eta \in (0, 1)$ and choose $p$ sufficiently close to $1$ such that $\eta \in(0, \theta_2(p))$. By \cite[Theorem 3]{durrett}, for any $\ell_1,\ldots,\ell_{d-2} \in [-n,n]$ and for some $c, C>0$, we have 
\begin{equation}
\PP (Y ( \ell_1,\ldots,\ell_{d-2}) \le \eta ) \le C  e^{- c n}. 
\end{equation} 
Combined  with \eqref{sum}, it implies that 

\begin{eqnarray*}
\PP \big(| B_n \cap \scrC  | \le \eta |B_n| \big) 
&\le&
\PP \Big(\sum_{-n\le \ell_1,\ldots,\ell_{d-2}\le n} Y(\ell_1,\ldots,\ell_{d-2} ) / (2n+1)^{d-2} \le \eta  \Big)\nonumber\\
&\le&
\PP \Big( \bigcup_{-n\le \ell_1,\ldots,\ell_{d-2}\le n} \big\{ Y(\ell_1,\ldots,\ell_{d-2} ) \le \eta \big\} \Big)\nonumber\\
&\le&
C\, n^{d-2}  e^{- c n},
\end{eqnarray*}
which gives \eqref{density-eq}.
\end{proof} 

Write $\CC ( x )$  for the open cluster containing the point $x$.  Then we have: 

\begin{lemma}
\label{nper}
For $q$ small enough, there exists a constant $c_1>1$ such that  
\begin{equation}
\label{nper-eq} 
\PP ( | \CC (0) | < \infty ) \le c_1 \, q^{2d}, 
\end{equation}
and, for all $x\sim0$, 
\begin{equation}
\label{nper-eq'} 
\PP ( | \CC (0) | < \infty \text{\,and\,} | \CC (x) | < \infty ) \le c_1 \, q^{4d - 2}.
\end{equation}

\end{lemma}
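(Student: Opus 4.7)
My plan is to apply a standard Peierls--type contour argument. Whenever $|\CC(0)|<\infty$, the set of closed edges with exactly one endpoint in $\CC(0)$ forms a cutset separating $0$ from infinity, of cardinality equal to the edge--boundary of the cluster. The probability that a prescribed cutset is realised is $q$ raised to its cardinality, so we union--bound over all possible cutsets, weighted by a combinatorial count of cutsets of each size.

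For \eqref{nper-eq}, I would note that this cutset has cardinality at least $2d$, with equality exactly when $\CC(0)=\{0\}$. Using the classical combinatorial fact that the number of minimal closed cutsets of cardinality $n$ surrounding a fixed vertex of $\Z^d$ is at most $\lambda^n$ for some constant $\lambda=\lambda(d)<\infty$ (see e.g.\ the Peierls argument in the proof of exponential decay in the subcritical phase, or the exponential bound on the number of lattice animals), a union bound yields
\[
\PP(|\CC(0)|<\infty) \;\le\; \sum_{n\ge 2d}\lambda^n q^n,
\]
which, for $q$ small enough that $\lambda q\le 1/2$, is at most $2\lambda^{2d}q^{2d}=:c_1 q^{2d}$.

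For \eqref{nper-eq'}, I would split according to whether $\CC(0)=\CC(x)$ or not. If $\CC(0)=\CC(x)$, the common finite cluster contains both $0$ and $x$, and I claim that any finite connected subset $A$ of $\Z^d$ with $|A|\ge 2$ containing $0$ and $x$ (with $0\sim x$) has edge--boundary at least $4d-2$. Indeed, using $|\partial_e A|=2d|A|-2|E(A)|$: the case $|A|=2$ gives exactly $4d-2$; for $|A|=3$, the absence of triangles in $\Z^d$ forces $|E(A)|=2$, so $|\partial_e A|=6d-4\ge 4d-2$; and for $|A|\ge 3$ in general the edge--isoperimetric inequality yields $|\partial_e A|\ge 4d-2$. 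The same Peierls summation then contributes a term of order $q^{4d-2}$. If instead $\CC(0)\ne\CC(x)$, the edge $(0,x)$ must be closed, and each of $\CC(0)$ and $\CC(x)$ carries its own cutset; the total number of closed edges is at least $(2d-1)+(2d-1)+1=4d-1$, giving at most a contribution of order $q^{4d-1}=O(q)\cdot q^{4d-2}$. Combining both cases yields \eqref{nper-eq'}.

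The main technical ingredient is the exponential combinatorial bound $\lambda^n$ on the number of minimal cutsets of size $n$ around a fixed vertex, which is classical in percolation theory. The remaining isoperimetric lower bound of $4d-2$ is elementary, and the required constant $c_1>1$ in both estimates is absorbed in the constants $2\lambda^{2d}$ and $2\lambda^{4d-2}$ arising from the geometric summation.
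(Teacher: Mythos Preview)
Your approach is the same Peierls/contour argument the paper uses (the paper phrases it in the dual plaquette language, you in terms of edge boundaries, but these are equivalent). For \eqref{nper-eq} your argument matches the paper's exactly.

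For \eqref{nper-eq'} your case split is valid in spirit, but Case~2 as written has a small gap. You assert that ``each of $\CC(0)$ and $\CC(x)$ carries its own cutset; the total number of closed edges is at least $4d-1$, giving a contribution of order $q^{4d-1}$''. The Peierls machinery, however, sums over \emph{connected} cutsets (this is what buys you the $\lambda^n$ count). If you try to sum over pairs $(\Gamma_0,\Gamma_x)$ of cutsets, the probability that both are closed is $q^{|\Gamma_0\cup\Gamma_x|}$, and the two cutsets may overlap in more than just the edge $(0,x)$, so you cannot conclude $|\Gamma_0\cup\Gamma_x|\ge |\Gamma_0|+|\Gamma_x|-1$ in general. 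Nor can you factor the probability using independence or FKG (the inequalities go the wrong way).

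The fix is immediate and in fact removes the need for a case split: whether or not $\CC(0)=\CC(x)$, the set $\CC(0)\cup\CC(x)$ is connected in $\Z^d$ because $0\sim x$, every edge in $\partial_e(\CC(0)\cup\CC(x))$ is closed, and by your own isoperimetric argument this boundary has size $\ge 4d-2$. Its dual plaquette set (or outer component thereof) is a single connected cutset around $\{0,x\}$, so one Peierls sum $\sum_{n\ge 4d-2}\lambda^n q^n$ handles both cases at once. This is precisely the paper's argument.
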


\begin{proof} 
Let us recall some necessary definitions that we can find in \cite{G}, Section 1.4. 
Call a \textit{plaquette} any unit $(d-1)$-dimensional hypercube in $\R^d$ that is a face of a cube of the form 
$x+[-\frac 12,\frac 12]^d$. Let $\LLL_d$ be the set of plaquettes. There is a one to one correspondence between edges in $\E_d$ and plaquettes in $\LLL_d$. 
Indeed, for any edge $\{x,y\}\in\E_d$, the segment $[x,y]$ intersects one and only one plaquette. 
We say a set of plaquettes is connected if all   
plaquettes in the set are connected by bonds
in the dual lattice of $\Z^d$. 

We couple the percolation process on $\E_d$ with a percolation on $\LLL_d$ by declaring a plaquette \textit{open} when the corresponding edge is open and 
declaring it is \textit{closed} otherwise.   

Let us suppose that $\CC(0)$ is finite. Then there exists a finite \textit{cutset} of closed plaquettes, say $\varpi$,  around the origin. 
(A cutset around the origin is a 
connected set of plaquettes 
$\mathtt c$ such that the origin  lies in a finite connected component of the complement of $\mathtt c$.) 

The number of such cutsets around the origin which contain $m$ plaquettes is at most $\mu^m$, for some constant $\mu = \mu(d)$ depending only on the dimension. 
The smallest cutset is unique and contains $2d$ plaquettes.  
Then the usual `Peierls argument' gives that the probability on the left hand side in \eqref{nper-eq} is bounded by

\begin{equation*}
\sum_{\varpi\text{,\,cutset around $0$}} \PP (\text{all plaquettes in\,} \varpi\, \text{are closed} ) \le \sum_{m \ge 2d} (\mu q)^m,
\end{equation*}
which converges and is bounded by  $c q^{2d}$ for some $c$ 
provided $p$ is sufficiently close to 1 such that $q \mu < 1$. 

As for the second estimate \eqref{nper-eq'}, we follow the same argument but we find the exponent $4 d - 2$ since this is the size of the smallest number of plaquettes necessary 
to form a cutset around both the origin and $x$. 
\end{proof} 

We now describe application of the preceding lemmas to conductances satisfying assumption \eqref{P}. 

We recall the following result. Call $\B_n$ the set of edges in the box $B_n$.

\begin{lemma} 
\label{fontes-mathieu}
Suppose that the conductances $( \omega_e, e \in \E_d)$ satisfy \eqref{P}. Then $\PP$-a.s., we have
$$\lim_{n \rightarrow \infty} \frac {\log \inf_{e \in \B_n} \omega_e} {\log n}= -\frac d\gamma.$$
\end{lemma}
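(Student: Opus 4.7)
The plan is to read this as a classical extreme-value estimate for i.i.d.\ random variables with polynomial left tail. Write $M_n := \inf_{e \in \B_n} \omega_e$. Since $|\B_n| \asymp n^d$ and $\PP(\omega_e \le u) = u^\gamma(1+o(1))$ as $u \to 0^+$, the heuristic scaling is $M_n \asymp n^{-d/\gamma}$, so the target statement splits into
$$\limsup_{n \to \infty} \frac{\log M_n}{\log n} \le -\frac{d}{\gamma} \quad \text{and} \quad \liminf_{n \to \infty} \frac{\log M_n}{\log n} \ge -\frac{d}{\gamma}.$$
Both directions are pure Borel--Cantelli.

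For the upper bound, fix $\varepsilon>0$ and put $u_n = n^{-d/\gamma + \varepsilon}$. By independence of $(\omega_e)_{e\in\E_d}$, together with $\PP(\omega_e \le u_n) \ge c u_n^\gamma = c n^{-d+\gamma\varepsilon}$ for $n$ large,
$$\PP(M_n > u_n) = \bigl(1 - \PP(\omega_e \le u_n)\bigr)^{|\B_n|} \le \exp\bigl(-c|\B_n|\, u_n^\gamma\bigr) \le \exp\bigl(-c' n^{\gamma\varepsilon}\bigr),$$
which is summable in $n$. Borel--Cantelli gives $M_n \le u_n$ eventually a.s., hence $\limsup_n \log M_n/\log n \le -d/\gamma + \varepsilon$; sending $\varepsilon \downarrow 0$ finishes this direction.

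For the lower bound, fix $\varepsilon>0$ and set $v_n = n^{-d/\gamma - \varepsilon}$. A union bound and $\PP(\omega_e < v_n) \le C v_n^\gamma$ yield
$$\PP(M_n < v_n) \le |\B_n|\cdot C v_n^\gamma \le C' n^{-\gamma\varepsilon},$$
which is \emph{not} summable in $n$. This is the only mildly subtle point: direct Borel--Cantelli on the full sequence fails, so I would restrict to the subsequence $n_k = 2^k$, for which $\PP(M_{n_k} < v_{n_k}) \le C' 2^{-k\gamma\varepsilon}$ is summable, and then interpolate using the trivial monotonicity $M_n \ge M_{n_{k+1}}$ whenever $n \le n_{k+1}$. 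Thus, a.s. for $k$ large and $n \in [n_k, n_{k+1}]$,
$$M_n \ge M_{n_{k+1}} \ge n_{k+1}^{-d/\gamma - \varepsilon} \ge (2n)^{-d/\gamma - \varepsilon},$$
so $\log M_n / \log n \ge -d/\gamma - \varepsilon + O(1/\log n)$. Sending $\varepsilon \downarrow 0$ completes the proof.

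The main obstacle is just this subsequence-plus-monotonicity interpolation on the lower side; everything else is a one-line Borel--Cantelli calculation using only $|\B_n| \asymp n^d$ and the tail assumption \eqref{P}.
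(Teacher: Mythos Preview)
Your argument is correct: both directions are handled by Borel--Cantelli, with the only wrinkle being the subsequence-plus-monotonicity interpolation on the lower side, which you carry out cleanly. The paper itself does not give a proof of this lemma but simply refers to \cite[Lemma~3.6]{Fontes-Mathieu}; your write-up is the standard extreme-value computation one would expect there, so there is nothing to compare.
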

\begin{proof} The proof is similar to \cite[Lemma 3.6]{Fontes-Mathieu}.\end{proof} 

\noindent 
The density estimate~Lemma~\ref{density} yields the following volume property for the measure $\theta$.

\begin{lemma}
\label{lm:V}
Let $\eta \in (0, 1)$ and $\beta\ge 1$. Let $\PP$ be a product probability measure satisfying \eqref{P}. Then for $\xi > 0$ small enough, 
for $\PP$-a.e. environment, for all $x \in B_{n^\beta}$ and $n$ large enough, we have
\begin{equation}\label{V}
\xi \, \eta | B_n | \le \pi \big( B ( x, n) \big) \le 2d \, | B_n |.
\end{equation}
\end{lemma}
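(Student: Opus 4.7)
The upper bound is immediate: since $\omega_e \le 1$ by \eqref{P}, each site $y\in\Z^d$ has $\pi(y)=\sum_{z\sim y}\omega_{yz}\le 2d$, so summing over $y\in B(x,n)$ yields $\pi(B(x,n))\le 2d|B_n|$. All the work is in the lower bound.

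The plan is to compare $\pi(B(x,n))$ to the volume of the intersection of $B(x,n)$ with the supercritical cluster $\scrC^\xi$ of edges with $\omega_e\ge\xi$. Since \eqref{P} gives $\PP(\omega_e\ge\xi)=1-\xi^{\gamma}(1+o(1))\to 1$ as $\xi\to 0$, we may choose $\xi>0$ small enough that $\PP(\omega_e\ge\xi)$ exceeds the threshold required by Lemma~\ref{density} to get density $\eta$ (and also small enough that $\scrC^\xi$ exists and Lemmas \ref{v-h}--\ref{density} apply). The key observation is that any site $y\in B(x,n)\cap\scrC^\xi$ has at least one incident edge of conductance at least $\xi$, so $\pi(y)\ge\xi$, and hence
\[
\pi(B(x,n))\ \ge\ \xi\,\big|B(x,n)\cap\scrC^\xi\big|.
\]
It therefore suffices to show that $\PP$-a.s., for all $n$ large and all $x\in B_{n^\beta}$, one has $|B(x,n)\cap\scrC^\xi|\ge \eta|B_n|$.

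To get this uniformly in $x\in B_{n^\beta}$, I would apply Lemma~\ref{density} in its translated form: by translation invariance of the i.i.d.\ measure $\PP$,
\[
\PP\big(|B(x,n)\cap\scrC^\xi|\le\eta|B_n|\big)\ \le\ C e^{-cn},\qquad \forall x\in\Z^d.
\]
A union bound over the at most $(2n^\beta+1)^d\le C n^{\beta d}$ sites $x\in B_{n^\beta}$ gives
\[
\PP\Big(\exists\,x\in B_{n^\beta}:\ |B(x,n)\cap\scrC^\xi|\le\eta|B_n|\Big)\ \le\ C n^{\beta d}e^{-cn},
\]
which is summable in $n$. Borel--Cantelli then gives the desired almost sure statement: for $\PP$-a.e.\ $\omega$, there exists a (random) $n_0(\omega)$ such that for all $n\ge n_0$ and all $x\in B_{n^\beta}$ one has $|B(x,n)\cap\scrC^\xi|\ge\eta|B_n|$, and thus $\pi(B(x,n))\ge\xi\eta|B_n|$.

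There is no real obstacle here; the only mild subtlety is ensuring that the exponential decay in Lemma~\ref{density} (linear in $n$, not in $n^\beta$) is still strong enough to survive the union bound of polynomial size $n^{\beta d}$, but this is obvious. The statement really reduces to the density estimate together with the trivial observation that $\xi$-good sites of the cluster contribute at least $\xi$ to the measure $\pi$.
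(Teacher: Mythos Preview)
Your proof is correct and follows essentially the same approach as the paper: the upper bound from $\pi(y)\le 2d$, and the lower bound from the density estimate (Lemma~\ref{density}) applied via translation invariance, a union bound over $x\in B_{n^\beta}$, and Borel--Cantelli, together with $\pi(y)\ge\xi$ for $y\in\scrC^\xi$.
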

\begin{proof}
Let $\eta \in (0, 1)$. Recall the infinite cluster $\scrC^\xi$ introduced in subsection~\ref{St-Pr}. The right-hand side inequality in \eqref{V} comes from the fact that $\pi (x) \le 2d$. As for the left-hand side inequality, observe that by \eqref{density-eq} and the i.i.d. character of the conductances, 
\begin{equation}
\PP \Big( \bigcup_{x \in B_{n^\beta}} \big\{ \big|\scrC^\xi \cap  B ( x, n) \big| < C \eta | B_n | \big\} \Big) \le | B_{n^\beta} | \, e^{- c n}.
\end{equation}
By the Borel-Cantelli lemma, we get  that for $n$ large enough, for all $x \in B_{n^\beta}$, 
we have $\big|\scrC^\xi \cap  B ( x, n) \big| \ge \eta | B_n |$.  
Since $\pi (x) \ge \xi$ for $x \in \scrC^\xi$, the claim follows.
\end{proof}

In next two lemmas we construct sets of `good' paths in the percolation clusters.

\begin{lemma}
\label{good path} 
Let $\PP$ be a product probability measure satisfying \eqref{P}. \\ 
$(1)$ Let $\gamma > \frac18 \frac{d}{d-1/2}$ and choose $\alpha \in (0,2)$ such that $\gamma \alpha ( 4d - 2) > d$. For $\xi$ small enough,  $\PP$-a.s., for $n$ large enough, 
for each edge 
$e$ in $B_n$, there exists a path of conductances larger than $n^{-\alpha}$ connecting one of the endpoints of $e$ to the frontier $\partial B_{n}$.

\medskip
\noindent
$(2)$ 
Let $\gamma > {1}/{4}$ and choose $\alpha \in (0,2)$ such that $\gamma > {1}/{(2\alpha)}$. For $\xi$ small enough,  $\PP$-a.s. for $n$ large enough, 
for each 
$x \in B_n$, there exists a path of conductances larger than $n^{-\alpha}$ joining $x$ to the frontier $\partial B_{n}$. 
\end{lemma}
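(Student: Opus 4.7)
The plan is to argue via a Peierls-style cutset bound applied to the auxiliary Bernoulli percolation in which an edge $e$ is called \emph{weakly closed at scale $n$} if $\omega_e \le n^{-\alpha}$, and \emph{strongly open} otherwise. By \eqref{P}, $\PP(\omega_e \le n^{-\alpha}) = n^{-\alpha\gamma}(1+o(1))$, so the weakly-closed-at-scale-$n$ edges form an i.i.d.\ percolation with parameter $q_n \sim n^{-\alpha\gamma}$. The two parts of the lemma then reduce to estimating the probability that a specific site (part~(2)) or pair of neighbouring sites (part~(1)) is enclosed by a closed dual surface lying inside $B_n$.

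First I would observe that a path in strongly-open edges from $x$ to $\partial B_n$ fails to exist iff there is a connected cutset of weakly-closed plaquettes surrounding $x$ inside $B_n$; by the same coupling between edges and plaquettes used in Lemma~\ref{nper}, the minimal such cutset has $2d$ plaquettes (the faces dual to the $2d$ edges incident to $x$). For part~(1), both endpoints of $e=\{u,v\}$ are simultaneously cut off from $\partial B_n$ only if a cutset surrounds $\{u,v\}$, and the minimal such cutset contains $4d-2$ plaquettes (the edges incident to $u$ or $v$, other than $e$ itself), exactly as in the proof of \eqref{nper-eq'}. Bounding the number of cutsets of size $m$ around a fixed site/pair by $\mu^m$ and taking $n$ large so that $\mu q_n < 1$, one obtains the per-point estimates
\begin{equation*}
\PP\bigl(x \not\leftrightarrow \partial B_n\bigr) \le c\, q_n^{2d},
\qquad
\PP\bigl(u,v\text{ both cut off from }\partial B_n\bigr) \le c\, q_n^{4d-2}.
\end{equation*}
A union bound over the $\le c\, n^d$ sites (resp.\ edges) in $B_n$ then gives failure probabilities $c\, n^{d-2d\alpha\gamma(1+o(1))}$ and $c\, n^{d-(4d-2)\alpha\gamma(1+o(1))}$ respectively.

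These per-$n$ probabilities are not directly summable under the stated thresholds, so the next step is a Borel--Cantelli argument along a dyadic subsequence $n_k = 2^k$. I would prove the \emph{stronger} event at level $k$ that for every edge (resp.\ vertex) in $B_{n_{k+1}}$ there is a path of conductances $> n_k^{-\alpha}$ reaching $\partial B_{n_{k+1}}$. Its failure probability is bounded by
\begin{equation*}
c\, n_{k+1}^d\, q_{n_k}^{4d-2} \le c'\, 2^{k[d-(4d-2)\alpha\gamma]},
\qquad\text{resp.}\qquad c'\, 2^{k[d-2d\alpha\gamma]},
\end{equation*}
which is geometrically summable precisely under the hypotheses $\alpha\gamma(4d-2)>d$ and $2\alpha\gamma>1$. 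Borel--Cantelli gives that $\PP$-a.s.\ the good events hold for all large $k$. For $n\in[n_k,n_{k+1}]$ the required statement then follows immediately: $B_n\subset B_{n_{k+1}}$, so any good path in $B_{n_{k+1}}$ must cross $\partial B_n$ on its way to $\partial B_{n_{k+1}}$, and its conductances exceed $n_k^{-\alpha}\ge n^{-\alpha}$ since $n\ge n_k$.

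The main technical point I expect to be delicate is the verification of the minimal cutset sizes ($2d$ for a single site and $4d-2$ for a neighbouring pair), which requires the standard plaquette-duality setup recalled in Lemma~\ref{nper}, rather than anything quantitative. The rest is a Peierls estimate combined with the dyadic-subsequence trick, which is what allows the thresholds $\alpha\gamma(4d-2)>d$ and $2\alpha\gamma>1$ to be tight — a naive Borel--Cantelli applied at every scale $n$ would require the strictly stronger summability conditions $\alpha\gamma(4d-2)>d+1$ and $2\alpha\gamma d>d+1$.
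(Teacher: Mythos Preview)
Your argument is correct and reaches the same conclusion through a genuinely different organization of the Borel--Cantelli step. Both proofs reduce to the Peierls-type cutset estimate of Lemma~\ref{nper} (giving $q_n^{4d-2}$ for an edge and $q_n^{2d}$ for a vertex), but the paper achieves summability differently: instead of a union bound over all $O(n^d)$ edges in $B_n$, it considers only the boundary shell $\partial\B_n$ of size $O(n^{d-1})$, obtaining $\PP(\scrE_n)\le c\,n^{-1-(\alpha\gamma(4d-2)-d)}$, which is directly summable in $n$ under the hypothesis $\alpha\gamma(4d-2)>d$. The price is that the paper must then concatenate the one-step paths $\partial\B_m\to\partial\B_{m+1}$ iteratively to build a path all the way out, and must treat the finitely many edges inside the random inner box $B_N$ separately using Lemma~\ref{fontes-mathieu}. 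Your dyadic-subsequence trick trades this iterative construction and the separate treatment of the core for a slightly less explicit ``for all large $n$'' statement; it is arguably cleaner and is self-contained (no appeal to Lemma~\ref{fontes-mathieu}). One minor imprecision: the cutset need not lie ``inside $B_n$''; what you actually use is that failure to reach $\partial B_n$ forces the strongly-open cluster to be finite, which is exactly the event bounded in Lemma~\ref{nper}.
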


Let $\scrH_n=B_n \cap \scrH^\xi$ and $\scrC_n=B_n \cap \scrC^\xi$. 

\begin{lemma}
\label{h-inj}
$(1)$ Let $\gamma > \frac18 \frac{d}{d-1/2}$ and choose $\alpha \in (0,2)$ such that $\gamma \alpha ( 4d - 2) > d$. For $\xi$ small enough, $\PP$-a.s. for $n$ large enough, there exists an injective map $\varphi$ on $\scrH_n$ into $\scrC_n$ such that 
for each edge $e = \{ x, y \} $ with $x \in \scrH_n$, there exists a path $\ell(e, \varphi (x))$ from one of the endpoints of $e$ to $\varphi(x)$ satisfying 
\begin{equation}
\label{ell}
|\ell( e, \varphi (x))|\le (\log n)^{2 d^2} \quad \text{and}\quad \frac{1}{\omega_b} < 4\, n^\alpha,\quad \forall b \in\ell( e, \varphi (x)).
\end{equation}

\noindent
$(2)$ Let $\gamma > 1/4$ and choose $\alpha \in (0,2)$ such that $2 \alpha \gamma > 1$. For $\xi$ small enough, $\PP$-a.s. for $n$ large enough, there exists an injective map $\varphi'$ on $\scrH_n$ into $\scrC_n$  such that 
for each $x\in\scrH_n$, there exists a path $\ell( x, \varphi' (x))$ from $x$ to $\varphi'(x)$ satisfying 
\begin{equation}
\label{ellbis}
|\ell( x, \varphi' (x))|\le (\log n)^{2 d^2 } \quad \text{and}\quad \frac{1}{\omega_b} < 4\, n^\alpha,\quad \forall b \in\ell( x, \varphi' (x)). 
\end{equation}
\end{lemma}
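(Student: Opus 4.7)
\noindent\emph{Plan.} Both parts follow the same strategy; we describe the construction of $\varphi'$ in part $(2)$ in detail and indicate the modification for part $(1)$ at the end. Set $\ell_n:=(\log n)^{c_0 d}$ and $L:=(\log n)^{2d^2}$, where $c_0=c_0(d)$ is the constant of Lemma~\ref{v-h}.

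\emph{Step 1 (holes are small, exit leads to $\scrC^\xi$).} A Borel--Cantelli argument applied to Lemma~\ref{v-h} with respect to all translates centred in $B_n$ yields $\PP$-a.s., for $n$ large, that every $\Z^d$-connected component $H$ of $\scrH^\xi$ meeting $B_n$ satisfies $|H|\le\ell_n\ll L$. Crucially, since such a hole is a \emph{maximal} $\Z^d$-connected subset of $\scrH^\xi$, every $\Z^d$-edge leaving $H$ enters $\scrC^\xi$. Thus, for $x\in\scrH_n$, the good path provided by Lemma~\ref{good path}(2) from $x$ to $\partial B_n$ (edges of conductance $>n^{-\alpha}$), truncated at the first vertex outside the hole $H_x\ni x$, produces a path $\sigma_x$ of length $\le\ell_n$ ending at some $y(x)\in\scrC^\xi\cap B(x,\ell_n)$, whose conductances all exceed $1/(4n^\alpha)$.

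\emph{Step 2 (candidate set).} Inside $\scrC^\xi$ every edge has conductance $\ge\xi>1/(4n^\alpha)$ for $n$ large, so any $\scrC^\xi$-path is automatically a ``good'' path. Standard chemical-distance estimates for supercritical Bernoulli percolation (Antal--Pisztora) yield, $\PP$-a.s.\ for $n$ large, a constant $C=C(\xi,d)$ such that any two vertices of $\scrC^\xi\cap B_n$ at Euclidean distance $\ge C\log n$ are joined by a $\scrC^\xi$-path of length at most $C$ times their Euclidean distance. Concatenated with $\sigma_x$, the candidate set
\[
N(x):=\{z\in\scrC_n:\text{there is a path }x\to z\text{ of length}\le L\text{ with conductances}>1/(4n^\alpha)\}
\]
thus contains $B(x,L/(4C))\cap\scrC^\xi$ for $n$ large.

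\emph{Step 3 (Hall's marriage theorem).} Form the bipartite graph on $\scrH_n\sqcup\scrC_n$ with $x\sim z$ iff $z\in N(x)$. For $S\subset\scrH_n$, let $S':=\bigcup_{x\in S}B(x,L/(4C))$, so $N(S)\supset S'\cap\scrC^\xi$. Covering $S'$ by cells of side $L/(4C)$ and applying the local density estimate of Lemma~\ref{density} yields $|S'\cap\scrC^\xi|\ge\eta|S'|$, while $S\subset S'\cap\scrH^\xi$ gives $|S|\le(1-\eta)|S'|$. Choosing $\xi$ small enough that the density satisfies $\eta\ge 1/2$ (possible since $\eta\to 1$ as $\xi\downarrow 0$), Hall's condition $|N(S)|\ge(\eta/(1-\eta))|S|\ge|S|$ follows. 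Hall's theorem then produces the injection $\varphi'$, and the corresponding paths satisfy \eqref{ellbis}.

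\emph{Part $(1)$ and main obstacle.} For $(1)$, Lemma~\ref{good path}(1) provides, for each edge $e=\{x,y\}$ with $x\in\scrH_n$, a good path from one endpoint $v_e\in\{x,y\}$ of $e$ to $\partial B_n$. Truncating at the first exit from the hole containing $v_e$ and then extending within $\scrC^\xi$ to $\varphi(x)$, defined exactly as in Step~3 (using that $v_e\in B(x,1)$ implies $\varphi(x)\in B(x,L/(4C))$ remains reachable from $v_e$ by a good path of length $\le L$), yields the required $\ell(e,\varphi(x))$. The principal technical ingredient, invoked in Step~2, is the Antal--Pisztora-type chemical-distance regularity of $\scrC^\xi$ at scale $L$; a secondary delicate point is the uniform verification of Hall's condition in Step~3, which is handled by the local-density / thickening argument above.
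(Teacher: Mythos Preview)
Your argument is essentially correct, but it takes a heavier route than the paper in two places, and a couple of details deserve more care.

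\emph{Constructing the injection.} The paper avoids Hall's theorem entirely. It fixes a partition of $\Z^d$ into boxes $B_m(z)$ of side $2m+1$ with $m=\lfloor(\log n)^{d+1}\rfloor$, applies Lemma~\ref{density} (with proportion $>1/2$, achievable for $\xi$ small) and Borel--Cantelli to every such box meeting $B_n$, and observes that each box then contains more $\scrC^\xi$-sites than $\scrH^\xi$-sites. The injection $\varphi$ is defined box by box, which is immediate. Your Hall verification is morally the same idea, but the sentence ``covering $S'$ by cells of side $L/(4C)$'' is loose: cells only partially contained in $S'$ do not contribute a full $\eta$-fraction. The clean version is to take grid cells meeting $S$ (these lie entirely in $S'$ once constants are adjusted) and apply the density estimate cell by cell; at that point you have essentially reproduced the paper's local construction, and Hall is unnecessary.

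\emph{Connecting within $\scrC^\xi$.} You invoke Antal--Pisztora chemical-distance bounds. The paper uses a more elementary argument: since both the entry point $x^k\in\scrC^\xi$ (your $y(x)$) and $\varphi(x)$ lie in the box $A_{4m}(x^k)=x^k+B_{4m}$, they must be joined by a $\scrC^\xi$-path inside that box, for otherwise a closed cutset of diameter $\ge m$ would separate them, contradicting the hole-size bound of Lemma~\ref{v-h}. The path length is then crudely bounded by $|A_{4m}|\le(8m+1)^d$, giving total length $\le m+(8m+1)^d\le(\log n)^{2d^2}$. This keeps the proof self-contained.

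Two small points. In Step~1 you should apply Lemma~\ref{good path} to a larger box (the paper uses $B_{2n}$) so that the good path is long enough to exit the hole even when $x$ lies near $\partial B_n$; as written, the path to $\partial B_n$ could terminate inside the hole. And your Antal--Pisztora invocation only covers pairs at distance $\ge C\log n$; you would also need to handle nearby pairs $y(x),z$, which is another reason the paper's cutset argument is cleaner here.
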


\begin{proofsect}{Proof of Lemma~\ref{good path}}
$(1)$  Let $\alpha \gamma ( 4 d - 2) > d$ for some $\alpha \in (0,2)$. Recall $\B_n$ the set of edges in the box $B_n$ and set $\partial \B_n = \B_n \setminus \B_{n-1}$. Note that $| \B_n | = O ( n^{d-1} )$.

Let $\scrE_n$ be the event:  there exists an edge $e \in \partial \B_{n}$ such that none of its endpoints can be joined by a path to $\partial \B_{n+1}$ along edges with conductances larger than $n^{-\alpha}$; this last event is denoted by $\{e \nleftrightarrow \partial \B_{n+1}\}$, i.e. 

\begin{equation}
\scrE_n \deq \bigcup_{e \in \partial \B_{n}}  \{e \nleftrightarrow \partial \B_{n+1}\}.
\end{equation}

\noindent 
Then Lemma~\ref{nper} with 
$q = \PP ( \omega_e < n^{- \alpha\gamma} )$ 
and \eqref{P} imply that 

\begin{equation}
\label{contour}
\PP \big( \scrE_n \big) \le c\, n^{- 1 - ( \alpha \gamma ( 4 d - 2) - d)}, 
\end{equation} 
By the Borel-Cantelli lemma we then get that there is a finite positive random variable $N = N(\omega)$ such that for any $n\ge N$, for every edge  $e \in\partial \B_{n}$, there exists a path of conductances larger than $n^{-\alpha}$ joining $e$ to $\partial \B_{n+1}$. It implies that there exists a path of conductances larger than $n^{-\alpha}$ joining one of the endpoints of  every edge in $\B_{n} \setminus \B_{N-1}$ to $\partial \B_{n+1}$. Indeed, consider an edge $f\in\partial \B_{m}$ for some $m\ge N$. From one of its endpoints starts a path of conductances larger than $n^{-\alpha}$ reaching $\partial \B_{m+1}$. Let $e\in\partial \B_{m+1}$ be the last edge of this path. Observe that the conductance of $e$ is larger than $n^{-\alpha}$. There is a path of conductances conductances larger than $n^{-\alpha}$ starting from one of the endpoints of $e$ and reaching $\partial \B_{m+2}$. But since the conductance of $e$ is larger than $n^{-\alpha}$, there is actually a path of conductances conductances larger than $n^{-\alpha}$ starting from any of the endpoints of $e$ and reaching $\partial \B_{m+2}$. Thus we constructed a path from $f$ to $\partial \B_{m+2}$. Iterating this construction, we obtain a path from one endpoint of $f$ to $\partial \B_{n+1}$. 

By Lemma \ref{fontes-mathieu}, all conductances in $B_N$ are greater than $N^{- c }$ for some positive constant $c$ depending on $d$ and $\gamma$. We can choose $n$ large enough such that $N^{- c} \ge n^{-\alpha}$, which ensures the existence of a path of conductances larger than  $n^{-\alpha}$ from one of the endpoints of $e \in \B_n$ to $\partial \B_{n+1}$. 

\medskip
\noindent
$(2)$ For the second assertion of the lemma, we can follow the same reasoning with a slight adaptation. Let $ \gamma > 1/ ( 2 \alpha )$ for some $\alpha \in (0,2)$. Set $\partial B_n= \{x\in B_n : \exists y  \notin B_n\, \text{s.t.}\,  x\sim y\}$, the frontier of $B_n$. 
As before, define $\scrE_n$ to be the event: there exists a vertex $x\in\partial B_{n}$ such that any path from $x$ to the boundary $\partial B_{n+1}$ has at least one edge with conductance less than $n^{-\alpha}$. Then we have by Lemma~\ref{nper} and \eqref{P} that

\begin{equation}
\label{contour'}
\PP \big( {\scrE_n} \big) \le c\, n^{-1- d ( 2 \alpha \gamma - 1)}. 
\end{equation}
The rest of the proof is similar.
\end{proofsect}

\medskip

\begin{proofsect}{Proof of Lemma~\ref{h-inj}} 
First let $\gamma > \frac{1}{8} \frac{d}{d-1/2}$ and choose $\alpha \in (0,2)$ such that $\gamma \alpha ( 4d - 2) > d$. 

Let $m \in \N^\ast$ and set $B_m (z) = (2m+1) z + B_m$ for $z\in\Z^d$. The family $\{B_m (z) \}_{z\in \Z^d}$ constitutes a partition of $\Z^d$.  Note that  $| B_m (z) | \ge m^d$. Then choosing $m= \lfloor (\log n)^{d+1} \rfloor$,  Lemma~\ref{density} and the Borel-Cantelli lemma yield that $\PP$-a.s. for $n$ large enough, the vertices in any $B_m (z)$ with $B_m (z)\cap B_n \neq \emptyset$ belong to $\scrC^\xi$ with a proportion that approaches $1$ when $\xi$ is small enough. We choose $\xi$ small enough such that this proportion is larger than $1/2$. Therefore, for any box $B_m (z)$ that intersects $B_n$, there are sufficiently many sites in $B_m (z) \cap \scrC_n$ to associate with sites in $B_m (z) \cap \scrH_n$ (if any) in an injective way. Let $\varphi$ be an injective map from $\scrH_n$ into $\scrC_n$ such that it associates a site in $B_m (z) \cap \scrH_n$ to a site in $B_m (z) \cap \scrC_n$.

Let us now construct the path $\ell(e, \varphi (e))$ for some edge $e = \{ x, y\}$ and $x \in B_m (z) \cap \scrH_n$ with $B_m (z) \cap B_n \neq \emptyset$. By Lemma~\ref{good path} $(1)$, $\PP$-a.s. for $n$ large enough, for any $e$ of $B_n$, there exists a self-avoiding path, say $(x^1,x^2,x^3,\ldots)$ with $x^1 = x$ or $y$, which reaches from $e$ the boundary of $B_{2n}$ with conductances larger than $(2n)^{-\alpha}$. 
By Lemma \ref{v-h} together with the Borel-Cantelli lemma, $\PP$-a.s. for $n$ large enough, $m$ is larger than the size of a hole. It follows that there is some $k < m$ such that $x^{k} \in \scrC^\xi$. This gives us the first part of the path $\ell(e, \varphi (x))$.

Next we claim that it is possible to join $x^k$ with $\varphi (x) \in B_m (z) \cap\scrC_n$ through a path on $\scrC^\xi$ inside $A_{4m}(x^k) \twopoints = x^k + B_{4m}$ (note that $x^k$ and $\varphi (x)$ belong to $A_{4m}(x^k)$ and that $d(\varphi (x), \partial A_{4m}(x^k))>m$). Indeed, if we suppose that it is not possible to find such a path, there would exist a closed cutset (as seen in Lemma~\ref{nper}) of conductances less than $\xi$ and of diameter at least $m$ separating $x^k$ from $\varphi (x)$ in $A_{4m}(x^k)$. But  Lemma \ref{v-h} rules out this possibility since $m$ is larger than the possible diameter of a hole. 
Therefore, there exists a self-avoiding path from $e$ to $\varphi (x)$ through edges with conductances larger than $(2n)^{-\alpha}$ and of length less than $m+(8m)^d \le ( \log n )^{2 d^2}$.
Note here that this path may leave the box  $B_n$. 

\smallskip
\noindent
$(2)$ The case for which $\gamma > 1/4$ can be treated identically using the assertion $(2)$ of Lemma~\ref{good path}.
\end{proofsect}

\section{\textbf{Spectral gaps estimates}} 
\label{sec:sg}

We work in $L^2 ( \theta )$, the Hilbert space of functions on $\Z^d$ with scalar product 
$$
\langle f, g \rangle = \sum_{x \in \Z^d} f (x) g (x) \theta (x),
$$ 
where $\theta(x)=\pi(x)$ in the CSRW and $\theta(x)=1$ for the VSRW. 

We also define the Dirichlet form 
\begin{equation}
\EE^\omega(f,f)=\dfrac{1}{2}\sum_{\{x,y\} \in\E_d} (f(x)-f(y))^2 \omega_{xy}.
\end{equation}

For both models, CSRW or VSRW, then  $\EE^\omega$ is 
the Dirichlet form on $L^2 ( \theta )$ associated with the corresponding random walk. 

Consider the self-adjoint operator 
\begin{equation}
\label{ggg}
\GG_\omega ( \lambda ) := \LL_\theta -\lambda \MM_\varphi,
\end{equation}
where $\varphi( x ) \deq \1_{\{ x \in\scrC^\xi\}}$ and $\MM_\varphi$ is the multiplicative operator by the function $\varphi$, i.e. $\MM_\varphi f (x) = \varphi (x)  f(x)$. Let $\RR_\omega^t ( \lambda )$ be the semigroup generated by $\GG_\omega ( \lambda )$. The Feynman-Kac formula (see \cite[Proposition 3.3]{B2}) reads 

\begin{equation}
\label{F-K}
\RR_\omega^t ( \lambda ) f(x)=E^x_\omega \left( f(X_t)e^{-\lambda A (t)} \right), \qquad t\geq 0,\, x\in\Z^d.
\end{equation} 

The semigroup of the operator $\GG_\omega ( \lambda )$ with Dirichlet boundary conditions outside the box $B_n$ is given by 
$$
(\RR_{n}^t f)(x) \deq E^x_\omega\left[f(X_t)e^{-\lambda A (t)}; \tau_{B_n} > t \right].
$$
Note that the operator $-\GG_\omega ( \lambda )$ with Dirichlet boundary conditions outside $B_n$ is a non-negative symmetric operator with respect to the restriction of the measure $\theta$ to $B_n$. 
Let $\{\lambda_i, i\in\{1,\ldots, |B_n|\}\}$ be the set of its eigenvalues labelled in increasing order, and $\{\psi_i, i\in\{1,\ldots, |B_n|\}\}$ the corresponding eigenfunctions with due normalization.

Then, by the min-max Theorem and \eqref{ggg}, the eigenvalue $\lambda_1$ is given by

\begin{equation}
\label{V2}
\lambda_1 =  \inf_ {f\not\equiv0}\frac{\EE^{\omega}(f,f)+\lambda\sum_{x\in\scrC_n}f^2(x) \theta (x)}{\sum_{x\in B_n} f^2(x) \theta (x)},
\end{equation}
where the infimum is taken over functions $f$ vanishing outside the box $B_n$.
Recall the notation  $\scrC_n=B_n \cap \scrC^\xi$.

First, we want to prove the following key estimates on $\lambda_1$. 

\begin{lemma}
\label{vph}
$(1)$ Let $X$ be the CSRW and take $\gamma > \frac18 \frac{d}{d-1/2}$. Then there exists $\alpha \in (0,2)$ such that for sufficiently small $\xi$, for a.e. environment, we have for $n$ large enough
\begin{equation}
\label{vphe}
\lambda_1 \ge  n^{- \alpha}
\end{equation}
when we choose $\lambda = ( 1 + 8d / \xi )\, n^{- \alpha}$.

\smallskip
\noindent
$(2)$ For the VSRW, for any $\gamma > 1/4$, there exists $\alpha \in (0,2)$ such that for $\xi$ small enough, for a.e. environment, for $n$ large enough,
\begin{equation}
\label{vphe'}
\lambda_1 \ge  n^{- \alpha}
\end{equation}
when we choose $\lambda = 3 n^{- \alpha}$.
\end{lemma}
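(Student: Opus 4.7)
My plan is to apply the Rayleigh--Ritz variational characterization \eqref{V2} of $\lambda_1$ and reduce the desired lower bound to a weighted Poincar\'e-type inequality of the form
$$\sum_{x \in B_n} \theta(x) f^2(x) \;\le\; A_\xi \sum_{y \in \scrC_n} \theta(y) f^2(y) \;+\; n^{\alpha}\, \EE^{\omega}(f,f),$$
valid for every $f$ supported in $B_n$, with $A_\xi = 1 + 4d/\xi$ in the CSRW case and $A_\xi = 3$ in the VSRW case. Given this inequality, $\lambda_1 \ge n^{-\alpha}$ follows at once from \eqref{V2} for the announced choices $\lambda = (1+8d/\xi)\, n^{-\alpha}$ and $\lambda = 3\, n^{-\alpha}$ respectively: the gap between $A_\xi$ and the prefactor in $\lambda$, together with a slight increase of $\alpha$ above the percolation exponent $\alpha_0$ supplied by Lemma \ref{h-inj}, absorbs the polylogarithmic overhead produced in the proof.

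To prove the Poincar\'e inequality I would decompose $\sum_{x \in B_n}$ into $\sum_{y \in \scrC_n}$ and $\sum_{x \in \scrH_n}$, transporting the hole-sum onto the cluster via the injection $\varphi: \scrH_n \to \scrC_n$ of Lemma \ref{h-inj}. For each $x \in \scrH_n$ the chord bound $f^2(x) \le 2 f^2(\varphi(x)) + 2 (f(x) - f(\varphi(x)))^2$ combined with Cauchy--Schwarz along the associated path $\ell$ gives
$$(f(x) - f(\varphi(x)))^2 \;\le\; |\ell|\sum_{b \in \ell} (\nabla f)^2(b) \;\le\; 4 L\, n^{\alpha_0}\sum_{b \in \ell} (\nabla f)^2(b)\, \omega_b,$$
where $L := (\log n)^{2 d^2}$ bounds the length of $\ell$ and I have used $\omega_b \ge n^{-\alpha_0}/4$ along $\ell$. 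After multiplying by $\theta(x) \le 2d$ (CSRW) or $\theta(x) = 1$ (VSRW) and summing, the $f^2(\varphi(x))$-terms are controlled by $\sum_{y \in \scrC_n} f^2(y)$ via injectivity of $\varphi$ and, in the CSRW case, converted to the weighted norm using $\pi(y) \ge \xi$ on $\scrC^\xi$. The Dirichlet contribution is treated via the multiplicity estimate that each edge $b$ lies in at most $C L^d$ of the paths $\ell$ (since both $x$ and $\varphi(x)$ must then lie within graph distance $L$ of $b$), inflating the total by at most $C L^{d+1} n^{\alpha_0}$, which is $\le n^{\alpha}$ for any $\alpha > \alpha_0$ once $n$ is large. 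Such an admissible $\alpha_0 < 2$ exists precisely under the hypotheses $\gamma > \tfrac{1}{8}\tfrac{d}{d-1/2}$ (CSRW) or $\gamma > 1/4$ (VSRW), which are the exact thresholds for the percolation constraint in Lemma \ref{h-inj}.

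One technical point in the CSRW case needs care: Lemma \ref{h-inj}(1) yields a path starting at \emph{one} of the endpoints of an edge incident to $x$, not necessarily at $x$ itself. I would therefore run the argument edge-by-edge, rewriting $\sum_{x \in \scrH_n} \pi(x) f^2(x) = \sum_{(x,y):\, x \in \scrH_n,\, y \sim x} \omega_{xy} f^2(x)$, and when the path starts at $y$ pay the extra cost $\omega_{xy}(f(x)-f(y))^2$, which is directly dominated by $\EE^\omega(f,f)$ up to a multiplicity of $2d$.

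The main obstacle, as I see it, is precisely the edge-reuse estimate: verifying that a given edge appears in at most $C L^d$ of the paths $\ell(e, \varphi(x))$. This is the only genuinely geometric input in the argument and relies crucially on both the length bound $|\ell| \le L$ and the injectivity of $\varphi$. Once this multiplicity bound is in hand, the remainder of the proof is arithmetic: optimize $\alpha_0$ against the percolation constraint of Lemma \ref{h-inj}, take $\alpha$ slightly larger to absorb the polylogarithmic factors $L^{d+1}$, and conclude via \eqref{V2}.
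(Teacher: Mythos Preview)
Your proposal is correct and follows essentially the same route as the paper: reduce \eqref{V2} to a weighted Poincar\'e inequality by transporting $f^2$ from $\scrH_n$ to $\scrC_n$ along the paths supplied by Lemma~\ref{h-inj}, apply Cauchy--Schwarz along each path, convert $(\nabla f)^2$ to $\omega_b(\nabla f)^2$ via $\omega_b\ge n^{-\alpha_0}/4$, control the $f^2(\varphi(x))$ terms by injectivity of $\varphi$ and $\pi(\varphi(x))\ge\xi$, bound the edge-reuse by the path-length estimate $|\ell|\le L$, and absorb the polylog overhead by taking $\alpha>\alpha_0$. Your edge-multiplicity argument (any edge $b$ on $\ell$ forces $x$ within distance $L$ of $b$, hence at most $CL^d$ choices) is exactly the reasoning behind the paper's $(\log n)^{2d^3}=L^d$ bound.

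The one cosmetic difference is your handling of the CSRW weight $\pi(x)$: you decompose $\pi(x)f^2(x)=\sum_{y\sim x}\omega_{xy}f^2(x)$ and run the argument once per incident edge, whereas the paper picks for each $x$ the single edge $e=\{x,y\}$ with $\omega_e=\max_{b\ni x}\omega_b$ and uses $\pi(x)\le 2d\,\omega_e$. Both devices achieve the same thing---making the extra term $(f(x)-f(y))^2$ absorbable into $\EE^\omega(f,f)$ when the path of Lemma~\ref{h-inj}(1) happens to start at $y$ rather than $x$---and both yield a constant of the form $1+O(d/\xi)$ in front of $\sum_{\scrC_n}\pi f^2$, which is all that matters for the stated choice of $\lambda$.
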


To obtain bounds for the exit time as in Proposition~\ref{ll}, we need to estimate another eigenvalue. 

Denote by $\LL_{\scrH_n}$ the generator of the random walk with the
vanishing Dirichlet boundary condition 
on $\scrH_n=B_n \cap \scrH^\xi$. 
The associated semigroup is given by $\p^t_{\scrH_n}=e^{t\mathcal{L}_{\scrH_n}}$.

The operator $- \LL_{\scrH_n}$ is symmetric with respect to the measure $\theta$ and
has $| \scrH_n |$ nonnegative eigenvalues that we enumerate in increasing order and denote as follows:
\begin{equation}
\zeta_1 \le \zeta_2 \le \cdots \le \zeta_{|\scrH_n|}. 
\end{equation}
$\{ \phi_i, i = 1, \ldots, | \scrH_n | \}$ is the set of the associated normalized eigenfunctions. 

The spectral gap $\zeta_1$ admits the variational definition
\begin{equation}
\label{zeta}
\zeta_1 = \inf_{f \neq 0} \frac{\langle - \LL_{\scrH_n} f, f \rangle}{\langle f, f \rangle} = \inf_{f \neq 0} \frac{ \EE^\omega ( f, f )}{\sum_{x \in \scrH_n} f ( x)^2 \theta (x)}, 
\end{equation}
where the infimum is taken over functions $f$ vanishing outside $\scrH_n$.

\begin{lemma}
\label{sg-h}
$(1)$ For the CSRW, for any $\gamma > \frac18 \frac{d}{d-1/2}$, there exists $\alpha \in (0,2)$ such that for sufficiently small $\xi$, for a.e. environment, for $n$ large enough,
\begin{equation}
\label{sg-h'}
\zeta_1 \ge  n^{- \alpha}.
\end{equation}

\smallskip
\noindent
$(2)$ For the VSRW, for any $\gamma > 1/4$, there exists $\alpha \in (0,2)$ such that for $\xi$ small enough, for a.e. environment, for $n$ large enough,
\begin{equation}
\label{sg-hbis}
\zeta_1 \ge  n^{- \alpha}.
\end{equation}
\end{lemma}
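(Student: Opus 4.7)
The proof will follow a Poincaré-inequality strategy based on the variational formula \eqref{zeta}: it suffices to show that for every $f$ supported in $\scrH_n$ one has
\[
\sum_{x\in\scrH_n} f(x)^2\,\theta(x)\;\le\;C\,n^{\alpha}(\log n)^{c(d)}\,\EE^\omega(f,f),
\]
because the logarithmic factor can then be absorbed by enlarging $\alpha$ very slightly while staying in $(0,2)$, thanks to the slack in the choice of $\alpha$ permitted by Lemma \ref{h-inj}. The proof for the VSRW (part 2) is the cleaner of the two, so I would treat it first and then adapt to the CSRW.

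For the VSRW case, $\theta\equiv 1$, so the relevant sum is $\sum_{x\in\scrH_n} f(x)^2$. For every $x\in\scrH_n$, Lemma \ref{h-inj}(2) provides a path $\ell(x,\varphi'(x))$ of length at most $L:=(\log n)^{2d^2}$ whose edges all satisfy $\omega_b^{-1}<4n^\alpha$. Since $f$ is supported in $\scrH_n$ and $\varphi'(x)\in\scrC_n$, one has $f(\varphi'(x))=0$, so writing $f(x)=f(x)-f(\varphi'(x))$ as a telescoping sum along the path and applying Cauchy--Schwarz yields
\[
f(x)^2\;\le\;L\!\!\sum_{b\in\ell(x,\varphi'(x))}\!\!(f(u_b)-f(v_b))^2\;\le\;4L\,n^{\alpha}\!\!\sum_{b\in\ell(x,\varphi'(x))}\!\!(f(u_b)-f(v_b))^2\,\omega_b.
\]
Summing on $x$ and exchanging the order of summation reduces the task to controlling the multiplicity $N(b):=|\{x\in\scrH_n\colon b\in\ell(x,\varphi'(x))\}|$.

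The CSRW case is handled analogously, but because $\theta(x)=\pi(x)$, the relevant sum $\sum_{x\in\scrH_n}\sum_{y\sim x}f(x)^2\omega_{xy}$ is indexed by directed edges $(x,y)$ with $x\in\scrH_n$. For each such edge $e=\{x,y\}$, Lemma \ref{h-inj}(1) gives a path starting from one endpoint $x^1\in\{x,y\}$ and reaching $\varphi(x)$. The Cauchy--Schwarz argument above bounds $f(x^1)^2$; when $x^1=y$, the elementary bound $f(x)^2\le 2(f(x)-f(y))^2+2f(y)^2$ combined with $\omega_{xy}\le 1$ gives the extra term $2(f(x)-f(y))^2\omega_{xy}$ which contributes at most $4\EE^\omega(f,f)$ after the directed edges are summed over. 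The remaining contribution takes the same form as in the VSRW case and reduces again to an estimate on a multiplicity $N(b)$.

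The main obstacle is therefore to show that $N(b)$ stays polylogarithmic in $n$, and this is exactly where the locality built into Lemma \ref{h-inj} is essential: the construction confines each path $\ell(e,\varphi(x))$ or $\ell(x,\varphi'(x))$ to a box of side $O(L)$ around its starting point (the path is obtained by following a conductance-good exit path of length $<m$ and then a connection inside $A_{4m}(x^k)$ with $m=\lfloor(\log n)^{d+1}\rfloor$). Consequently only starting sites or starting edges lying within graph distance $O(L)$ of $b$ can have $b$ on their path, giving $N(b)\le CL^d=C(\log n)^{2d^3}$. Plugging this bound into the two preceding displays gives the desired Poincaré inequality with constant $Cn^{\alpha}(\log n)^{c(d)}$, hence the lower bound $\zeta_1\ge n^{-\alpha}$ after absorbing logs. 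In both statements the hypothesis on $\gamma$ ($\gamma>\tfrac18\tfrac{d}{d-1/2}$ for CSRW and $\gamma>1/4$ for VSRW) is precisely what allows the $\alpha$ supplied by Lemma \ref{h-inj} to be chosen strictly less than $2$, so the final exponent remains in $(0,2)$ as required.
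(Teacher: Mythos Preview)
Your proposal is correct and follows essentially the same Poincar\'e-inequality strategy as the paper: telescope $f(x)$ along the paths of Lemma~\ref{h-inj}, apply Cauchy--Schwarz, bound the edge multiplicity by $(\log n)^{2d^3}$ via the locality of the paths, and absorb the polylogarithmic factor by passing from $\alpha'$ to a slightly larger $\alpha\in(0,2)$. The only difference is in the CSRW weight: the paper selects for each $x\in\scrH_n$ the single edge $e$ of maximal conductance and uses $\pi(x)\le 2d\,\omega_e$, whereas you expand $\pi(x)=\sum_{y\sim x}\omega_{xy}$ and run the path argument once per incident edge; both variants yield the same bound up to a harmless factor of $2d$.
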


\medskip

\begin{proofsect}{Proof of Lemma~\ref{vph}}
$(1)$ Let $\gamma > \frac18 \frac{d}{d-1/2}$; then choose $\alpha' \in (0,2)$ such that $\gamma \alpha' ( 4d - 2) > d$ and $\alpha$ such that $\alpha'<\alpha<2$. 
Let $f$ be a function vanishing outside $B_n$. We use the notation $\text d f(b) \deq f(a) - f(c)$ for any edge $b = \{ a, c\}$.

Let $x \in \scrH_n$ and call $e = \{ x, y \}$ the edge such that $\omega_e = \max_{b \ni x} \omega_b$. 

We use the paths $\ell$ constructed in Lemma \ref{h-inj} to get that 
\begin{equation}
\label{dfe}
f(x) = f (x ) - f (y) + \sum_{b\in \ell( e, \varphi (x))} \text d f(b) + f(\varphi (x)),
\end{equation}  
if the path $\ell( e, \varphi (x))$ starts at $y$. Otherwise, 
\begin{equation}
\label{dfe2}
f(x) = \sum_{b\in\ell( e, \varphi (x))} \text d f(b) + f(\varphi (x)).
\end{equation}
Let us consider the case \eqref{dfe}. 
Observe that Cauchy-Schwarz inequality gives
\begin{equation}
\label{dfer}
 f(x)^2 \leq 2 ( f (x) - f (y) )^2 + 4 \vert \ell(e,\varphi (x))\vert \sum_{b\in\ell( e, \varphi (x))} \text d f(b)^2+ 4 f(\varphi (x))^2.
\end{equation}
Noting that $\pi ( \varphi(x) ) \ge \xi$ and 
\begin{equation}
\label{e-max}
\pi (x) \le 2d\, \omega_e \le 2d,
\end{equation}
we obtain that 
$$
f(x)^2\,\pi(x) \leq 4d ( f (x) - f (y) )^2\omega_e + 8d \vert \ell(e,\varphi (x))\vert \sum_{b\in\ell( e, \varphi (x))} \text d f(b)^2+  \frac{8 d}\xi f(\varphi (x))^2\,\pi(\varphi(x)).
$$

Using the bounds from Lemma \ref{h-inj} (that we apply with $\alpha'$ rather than $\alpha$), we get 
\begin{eqnarray*}
&&f(x)^2\,\pi (x)\\
&\le&
4d ( f (x) - f(y) )^2 \omega_e  + 32 d n^{\alpha'} \, (\log n)^{2 d^2}\,  \,\sum_{b\in\ell( e, \varphi (x))} \text d f(b)^2 \omega_b
+ \frac{8 d}\xi  f(\varphi (x))^2 \pi(\varphi(x)). 
\end{eqnarray*} 
The case \eqref{dfe2} can be treated in the same way
and we have the same inequality. 

Let us now sum this inequality for $x\in\scrH_n$ 
Observe that: - a given edge appears at most $(\log n)^{2 d^3}$ (because of the bound on the length of the path), 
- a given $\varphi(x)$ only appears at most once. 
So 

$$\sum_{x\in \scrH_n} f(x)^2\,\pi (x)
\le 32 d n^{\alpha'} \, (\log n)^{2 d^2}\,  (\log n)^{2 d^3}\, \EE^\omega(f,f)+\frac{8 d}\xi  \sum_{x\in \scrC_n}f(x)^2 \pi(x).
$$

Choose $n$ big enough so that $32 d n^{\alpha'} \, (\log n)^{2 d^2}\,  (\log n)^{2 d^3}\le n^\alpha$. We have obtained the inequality 
\begin{eqnarray*} 
\sum_{x\in \scrH_n} f(x)^2\,\pi (x)\le 
n^{\alpha}\,  \EE^{\omega}( f, f ) + 
\left(\frac{8 d}\xi \right) \sum_{x\in\scrC_n} f^2(x) \pi (x). 
\end{eqnarray*}

To conclude, use the variational formula \eqref{V2}.

\medskip
\noindent
$(2)$ The argument is the same and here we just give an outline of the proof. Let $\gamma > 1/4$, choose $\alpha' \in (0,2)$ such that $\gamma > 1 / (2 \alpha')$ 
and $\alpha$ such that $\alpha'<\alpha<2$. 

Let  $f$ be a function vanishing outside $B_n$. 

Let $x \in \scrH_n$. Then Lemma \ref{h-inj} implies

\begin{equation}
\label{dfe2'}
f(x) = \sum_{b\in\ell( x, \varphi' (x))} \text d f(b) + f(\varphi' (x)),
\end{equation}
which by Cauchy-Schwarz inequality gives
\begin{eqnarray}
\label{dferv}
 f(x)^2 \leq 2 \vert \ell(x,\varphi' (x))\vert \sum_{b\in\ell( x, \varphi' (x))} \text d f(b)^2+ 2 f(\varphi' (x))^2.\nonumber
\end{eqnarray} 
Summing over $\scrH_n$, note that a given edge appears at most $(\log n)^{2 d^3}$ (because of the bound on the length of the path), 
and a given $\varphi(x)$ only appears once. Thus we obtain 
$$
\sum_{x \in \scrH_n} f(x)^2 \leq 2 ( \log n )^{2 d^2}\, ( \log n )^{2 d^3}\, n^{\alpha'} \EE^\omega (f, f)+ 2 \sum_{x \in \scrC_n} f(x)^2,
$$
and hence
$$
\sum_{x\in B_n} f(x)^2 
\leq  
n^{\alpha}\,  \EE^{\omega}( f, f ) + 
3 \sum_{x\in\scrC_n} f^2(x), 
$$
when  $n$ is large enough.
The variational formula \eqref{V2} then yields the desired estimate.
\end{proofsect}

We pass now to the proof of the second spectral gap $\zeta_1$.

\begin{proofsect}{Proof of Lemma~\ref{sg-h}}
The argument is the same as in estimating $\lambda_1$. 

\noindent
$(1)$ Let $X$ be the CSRW and assume that $\gamma > \frac18 \frac{d}{d - 1/2}$.

Suppose $x \in \scrH_{n}$ and call $e = \{ x, y \}$ the edge such that $\omega_e = \max_{b \ni x} \omega_b$. Let $f$ be a function  vanishing outside $\scrH_n$. Then thanks to Lemma~\ref{h-inj}, there exists a path $\ell (e, \varphi (x))$ connecting $e$ to a site $\varphi (x) \in \scrC^\xi \cap B_n$. If this path starts at $y$, write then
\begin{equation}
\label{dfe-z}
f(x) = f (x ) - f (y) + \sum_{b\in \ell( e, \varphi (x))} \text d f(b).
\end{equation}  
Otherwise, write
\begin{equation}
\label{dfe2-z}
f(x) = \sum_{b\in\ell( e, \varphi (x))} \text d f(b).
\end{equation}
Consider the case \eqref{dfe-z} and do the same thing for the second one. 

By Cauchy-Schwarz inequality, \eqref{dfe-z} gives
\begin{equation}
\label{dfer-z}
 f(x)^2 \leq 2 ( f (x) - f (y) )^2 + 2 \vert \ell(x,\varphi (x))\vert \sum_{b\in\ell( e, \varphi (x))} \text d f(b)^2.
\end{equation}
Multiply \eqref{dfer-z} by $\pi (x)$ and use \eqref{e-max} and \eqref{ell} to obtain
\begin{equation}
\label{dfer2}
 \sum_{x \in \scrH_n} f(x)^2 \pi (x) \leq 4 d \sum_{x \in \scrH_n} ( f (x) - f (y) )^2 \omega_e + 8 d\, ( \log n )^c\, n^{\alpha'} \,
 \EE^\omega ( f, f),
\end{equation}
where $\alpha' \in (0, 2)$ is chosen such that $\gamma \alpha' ( 4d - 2) > d$ and we used again the fact that a given edge appears at most $(\log n)^{2 d^3}$ (because of the bound on the length of the path).  Thus for $\alpha \in ( \alpha', 2)$ and $n$ large enough,
$$
\text{R.H.S. of \ref{dfer2}} \le n^\alpha \EE^\omega (f, f ).
$$
which, using \eqref{zeta}, gives the lower bound \eqref{sg-h}.

\medskip
\noindent
$(2)$
As for the VSRW, instead of \eqref{dfe-z}, we have by Lemma~\ref{h-inj},
\begin{equation*} 
f(x) = \sum_{b\in \ell( x, \varphi (x))} \text d f(b).
\end{equation*} 
The remainder of the proof is the same. 
\end{proofsect}

\section{\textbf{Proof of Proposition~\ref{ll}}}\label{twc}

With all the necessary tools in hand, we can finally provide the proof of Proposition~\ref{ll}.

\begin{proofsect}{Proof of Proposition~\ref{ll}}
$(1)$ Let $X$ be the CSRW. Take $\gamma > \frac{1}{8} \frac{d}{d - 1/2}$ and let $\alpha \in (0,2)$ be as in Lemma~\ref{vph}. Choose $\delta > 0$ such that 
\begin{equation}
\label{galpha}
1 - \alpha\, \frac{(1+\delta)}{2}  >  0.
\end{equation}  

Let $n =  t^{(1+\delta)/2}$ and suppose $x\in B_{n/2}$. 
Observe that for any constant $\lambda>0$ (may be $\PP$-random) and any $\varepsilon\in(0,1)$, Chebyshev's  inequality gives 
\begin{eqnarray}
\label{i11}
P^x_\omega (A (t)\leq  \varepsilon\, t)
&=&
P^x_\omega(A (t)\leq  \varepsilon\, t; \tau_{B_n} > t)+ P^x_\omega(A (t)\leq \varepsilon\, t; \tau_{B_n} \leq t)\nonumber
\\
&\leq&
P^x_\omega\left(e^{-\lambda A (t)}\geq e^{-\varepsilon \lambda t}; \tau_{B_n}  > t \right) + P^x_\omega(\tau_{B_n} \leq t)\nonumber
\\
&\leq&\label{TC}
e^{\varepsilon \lambda t} \,E^x_\omega\left(e^{-\lambda A (t)}; \tau_{B_n}  > t\right)+ P^x_\omega( \tau_{B_n} \leq t).
\end{eqnarray}
By \cite[Proposition 4.7]{ABDH}), we have for $t$ large enough,
\begin{equation}
\label{c-v}
P^x_\omega(\tau_{B_n}  \leq t )\leq C\, e^{- c t^\delta},
\end{equation}
where $C, c$ are numerical constants. 

Let us look now at the first term of the right hand side of \eqref{TC}. Recall the eigenvalues $\{\lambda_i, i\in\{1,\ldots, |B_n|\}\}$ of 
the restricted operator $- \GG_\omega ( \lambda)$ 
and their associated normalized eigenfunctions $\{\psi_i, i\in\{1,\ldots, |B_n|\}\}$. For $f = \1_{B_n}$, observe first that

\begin{eqnarray}
\label{sg-decomp}
(\RR^t_{n} f) (x)
&=&
E^x_\omega\left(e^{-\lambda A (t)};  \tau_{B_n} > t \right)
= \sum_{i=1}^{|B_n|} e^{-\lambda_i t}\left\langle f, \psi_i\right\rangle\psi_i(x).
\end{eqnarray}
Then
 \begin{equation*}
(\RR^t_{n} f)^2(x) \pi (x)
\le
\sum_{y\in B_n} (\RR^t_{n} f)^2(y) \pi (y)
=
\sum_i e^{-2\lambda_i t}\left\langle f, \psi_i\right\rangle^2
\le e^{-2\lambda_1 t} \Vert f \Vert^2_2, 
\end{equation*}
which is less than
 \begin{equation*}
2d\, |B_n|\, e^{-2\lambda_1 t}.
\end{equation*}
Thus  by Lemma~\ref{vph} (choosing $\lambda=cn^{-\alpha}$) 
and using the fact that $1/\pi (x)\leq n^{c}$, $c>0$ being a constant that depends only on $d$ and $\gamma$ (cf. Lemma \ref{fontes-mathieu}), we obtain
\begin{equation}
\label{QSE}
e^{\varepsilon \lambda t}\, {E^x_\omega} \big(e^{-\lambda A (t)}; \tau_{B_n}  > t \big)
\le 
C\, n^{d + c}\, e^{-  ( 1 - \varepsilon ) t^{1 - \alpha ( 1+\delta)/2}}.
\end{equation}
According to \eqref{galpha} and since $\varepsilon \in (0, 1)$, for large enough $t$, we have  
\begin{equation}
\text{R.H.S. of \eqref{QSE}}
\le
c\, e^{- (1 - \varepsilon) t^\sigma}\label{qle} 
\end{equation}
for any $\sigma < 1 - \alpha (1 + \delta)/2$. 
Thus \treeeqref{TC}{c-v}{qle} give the desired upper bound for any $\sigma$ small enough.

\medskip

As for the exit time estimate, suppose $x \in \scrH_n = B_n \cap\scrH^\xi$ with $n = t^{(1+\delta)/2}$. Recall the eigenvalues $\{\zeta_i, i\in\{1,\ldots, |\scrH_n|\}\}$ of the restricted operator $- \LL_{\scrH_n}$ and their associated normalized eigenfunctions $\{\phi_i, i = 1,\ldots, |\scrH_n|\}$. Let $f = \1_{\scrH_n}$ and observe that
\begin{equation}
P^x_\omega ( \tau_h > t/2 ) = \p^{t/2}_{\scrH_n}  f (x) = \sum_{i=1}^{|{\scrH_n}|} e^{-\zeta_i t/2}\left\langle f, \phi_i\right\rangle \phi_i(x)
\end{equation}
which, by Lemmas~\tworef{sg-h}{v-h}, yields that
\begin{equation}
P^x_\omega ( \tau_h > t/2 )
\le 
\frac{e^{- \zeta_1 t/2}}{\sqrt{\pi ( x )}} \,   \Vert f \Vert_2 
\le
\frac{|B_n|}{\sqrt{\pi ( x )}}\, e^{- \zeta_1 t/2}
\le 
n^{d+c/2}\, 
e^{- \frac12 t^{1 - \alpha (1+ \delta)/2} }
\end{equation}
where we used again that $1/\pi (x)\leq n^{c}$. The claim follows for any $\sigma <  1 - \alpha (1+ \delta)/2$.

\smallskip
\noindent
$(2)$
Clearly, the above argument for the CSRW holds for the VSRW with $\gamma > 1/4$ and the counting measure instead of $\pi$.
\end{proofsect}

\section{Appendix}\label{appen}

Here, we give some relatively 
standard proofs for completeness. 

\begin{proofsect}{Proof of Proposition \ref{Pr:11.1}} 
Fix a large ball $B(x_0,R)$, and denote for simplicity $B_r = B(x_0,r)$. Let us prove that, for any
$R_1(x_0)\le r< R/6$,
\begin{equation}
\label{11.1-bis}
\osc_{B_r}\, u \le (1-\delta) \osc_{B_{3r}}\, u, 
\end{equation}
where $\delta = \delta(C_E) \in (0,1)$. Then \eqref{Pr:11.1-eq} follows from \eqref{11.1-bis} by iterating.

The function $u - \min_{B_{3r}} u$ is nonnegative in $B_{2r}$ and
harmonic in $B_{2r}$ . Applying  Assumption \ref{asmp:1-1asp} (iii) to this function, we obtain
$$\max_{B_r} u - \min_{B_{3r}} u \le C_E (\min_{B_r} u - \min_{B_{3r}} u),$$
for all $R_1(x_0)\le r\le R/6$, so 
$$\osc_{B_r} u \le (C_E - 1) (\min_{B_r} u - \min_{B_{3r}} u).$$
Similarly, we have
$\osc_{B_r} u \le (C_E - 1) (\max_{B_{3r}} u - \max_{B_{r}} u).$
Summing up these two inequalities, we get
$$(1+C_E) \osc_{B_r} u \le (C_E-1)\, \osc_{B_{3r}} u,$$
whence \eqref{11.1-bis} follows.
\end{proofsect}

\begin{proofsect}{Proof of Proposition \ref{Pr:11.2}}  
Denote for simplicity $B_r = B(x_0,r)$. Let 
\[
g_{B_R}(x,y)=\int_0^\infty p_t^{B_R}(x,y)dt. 
\]
We then have
$$u(y) = - \sum_{z\in B_R} g_{B_R} (y,z) f(z)\theta_z,$$
and since 
$E (x_0,R) = \sum_{y\in B_R} g_{B_R} (x_0,y)\theta_y$, 
we obtain
$$\max_{B_R} |u| \le \overline E (x_0,R) \,\max_{B_R} |f|.$$
Let $v$ be a function on $B_r$ that solves the Poisson equation $\LL v = f$ in $B_r$. In the same way
$$\max_{B_r} |v| \le \overline E (x,r) \,\max_{B_r} |f|.$$
The function $w = u - v$ is harmonic in $B_r\subset B_R$ whence, by Proposition~\ref{Pr:11.1},
$$\osc_{B_{\sigma r}}\, w \le \varepsilon \osc_{B_r}\, w\qquad \forall (\sigma r)\ge R_1(x_0).$$
Since $w = u$ on $B_R \setminus B_r$, the maximum principle implies that
$$\osc_{B_r} w\le \osc_{B_R} w = \osc_{\overline B_R\setminus B_r} w = \osc_{\overline B_R\setminus B_r} u \le 2\, \max_{B_R} |u|.$$
Hence,
$$\osc_{B_{\sigma r}} u \le \osc_{B_{\sigma r}} v + \osc_{B_{\sigma r}} w \le 2\, \max_{B_{\sigma r}} |v| + 2 \varepsilon\, \max_{B_R} |u| \le 2(\overline E(x_0, r) + \varepsilon\, \overline E(x_0, R)) \max_{B_R} | f |,$$
\end{proofsect}

\begin{proofsect}{Proof of Proposition \ref{PR:12.1}}   
(i) Let $\LL^ A_{V}$ be the restriction of the operator $\LL_V$ on $A$ with Dirichlet boundary conditions outside $A$ and denote by $\{\lambda_i, i\in\{1,\ldots, |A|\}\}$ be the set of eigenvalues of the positive symmetric operator $-\LL^ A_{V}$ labelled in increasing order, and $\{\psi_i, i\in\{1,\ldots, |A|\}\}$ the corresponding eigenfunctions with due normalization. We have
$$u_t = P^A_t f = \sum_i e^{-\lambda_i t} \left\langle f, \psi_i\right\rangle \psi_i,$$
which gives 
$$- \partial_t\, u_t = \sum_i \lambda_i e^{-\lambda_i t} \left\langle f, \psi_i\right\rangle \psi_i,$$
and thus
$$\| \partial_t\, u_t \|^2_2 = \sum_i \lambda^2_i e^{- 2\lambda_i t} \left\langle f, \psi_i\right\rangle^2.$$

Using the inequality $\lambda_i s \le  e^{\lambda_i s}$, we get
$$\| \partial_t\, u_t \|^2_2 \le \frac{1}{s^2}  \sum_i e^{- 2 \lambda_i (t-s)}  \left\langle f, \psi_i\right\rangle^2 = \frac{1}{s^2}\, \| u_{t-s} \|^2_2.$$
(ii) We have the semigroup identity
$$p^ A_t(x,y) = \sum_{z} p^ A_v(x,z) p^ A_{t-v} (z,y)\theta_z,$$
from which we get
$$\partial_t\, p^A_t(x,y) = \sum_z p^ A_v(x,z) \partial_t\,\, p^ A_{t-v} (z,y)\theta_z,$$ 
whence
$$\left|\partial_t\, p^A_t(x,y)\right| \le \|  p^ A_v(x,\cdot) \|_2 \|\partial_t\, p^ A_{t-v} (y,\cdot)\|_2,$$
By Proposition~\ref{PR:12.1}\,(i), 
$$\|\partial_t\, p^ A_{t-v} (y,\cdot)\|_2 \le \frac1s \|\partial_t\, p^ A_{t-v-s} (y,\cdot)\|_2$$
for any $s\le t-v$. Since
$$\|p^A_v (x,\cdot)\|^2_2 = \sum_z p^A_v (x,z)^2\theta_z = p^A_{2v} (x,x),$$
we obtain \eqref{Pr:12.2-eq}.

\noindent 
(iii) Choose $v\simeq s\simeq t/3$, it follows then from Assumption \ref{asmp:1-1asp} (i) that for any nonempty finite set $A \subset \Z^d$ and $t$ large enough,
$$p^A_{2v} (x,x) \le C\, t^{-d/2} ~~\text{and }~~ p^A_{2(t-v-s)} (y,y) \le C\, t^{-d/2},
~\,~~~\forall x,y\in B(x_0, t^{(1+\delta)/2}),$$
when $2t/3\ge T_0(x_0)$, 
whence by Proposition~\ref{PR:12.1}\,(ii),
$$\left| \partial_t\, p^A_t (x,y) \right| \le C \,t^{-(\frac d2+1)}.$$
By letting $A\rightarrow \Z^d$, we obtain \eqref{Pr:12.3-eq}.
\end{proofsect}

\begin{proofsect}{Proof of Theorem \ref{thm:PHI}}
Given Proposition \ref{Pr:13.1}, we can use the balayage argument as in the proof of  
\cite[Theorem 3.1]{BH-LCLT}. Note that the statement of \cite[Theorem 3.1]{BH-LCLT} includes
estimates of very good balls, but as in the proof, we only need the heat kernel estimates.

Let $C'>0$ be slightly larger than $1$, $C_*=\delta_0^{-1}C'$ 
and define 
\begin{eqnarray*}
&B=B(x_0,C_*R),~~ 
B_1=B(x_0, C'R), \\
&Q=Q(x_0,R,R^2)=(0,4R^2]\times B, E=(0,4R^2]\times B_1.
\end{eqnarray*}
Let $u(t,x)\ge 0$ be caloric on $Q$.
Let $Z$ be the space-time process on $\R \times G$
given by $Z_t=(V_0-t, X_t)$, where $X$ is the Markov chain on $G$, and $V_0$ is the 
initial time. 
Define $u_E$ by
\[
 u_E(t,x)= E^x \big( u(t-T_E, X_{T_E}); T_E< \tau_Q \big),   
\]
where $T_E=\inf\{t\ge 0: Z_t\in E\}$ and $\tau_Q=\inf\{t\ge 0: Z_t\notin Q\}$. 
Clearly, $u_E=u$ on $E$, $u_E=0$ on $Q^c$, and $u_E\le u$ on $Q-E$. 
Since a dual process of $Z$ exists and can be written as $(V_0+t, X_t)$, the balayage formula holds
and we can write
\[
u_E(t,x)=  \int_E p^B_{t-r}(x,y) \nu_E(dr,dy), \quad (t,x) \in Q, 
\]
for a suitable measure $\nu_E$. 
Here $p^B_t(x,y)$ is the heat kernel of $X$, killed on exiting from $B$. 
In this case we can write things more
explicitly. Set
\begin{equation}\label{J-def}
 J f(x) = 
\begin{cases}
 \sum_{y \in B} \frac{\omega_{xy}}{\theta(y)} f(y) 
 &\hbox{ if } x \in B_1, \\
 0  & \hbox{ if } x \in B-B_1. 
\end{cases}
\end{equation}
The balayage formula takes the form 
\begin{equation}\label{bal-cts}
 u_E(t,x) = \sum_{y \in B_1} p^B_t(x,y) u(0,y)\theta(y)
+   \sum_{y \in B_1}\int_{(0,T]} p^B_{t-s}(x,y)k(s,y) \theta(y) ds,
\end{equation}
where $k(s,y)$ is zero if $y \in B-B_1$ and  
\begin{equation}\label{k-cts}
   k(s,y) = J (u(s,\cdot)-u_E(s,\cdot))(y), \quad y \in B_1.
\end{equation}
(See \cite[Proposition 3.3]{BBK2}; See also \cite[Appendix]{BH-LCLT} for a self-contained proof of \eqref{bal-cts}
and \eqref{k-cts} for the discrete time case.)
Since $u=u_E$ on $E$, if $s>0$ then \eqref{k-cts} implies
that $k(r,y)=0$ unless $y \in \partial (B-B_1)$.

Now let $(t_1,y_1) \in Q_-$ and $(t_2,y_2) \in Q_+$.
Note that since $(t_i,y_i)\in E$ for $i=1,2$, 
we have $u_E(t_i,y_i)=u(t_i,y_i)$. 
Choose $R_5(x_0)$ large enough such that $R_5(x_0)\ge C(R_*(x_0)+\sqrt {T_0(x_0)}+\sqrt {T_1(x_0)})$
for some $C\ge 1$. 
By Assumption \ref{asmp:1-1asp} (i), Proposition \ref{Pr:13.1} and 
Corollary \ref{thm:corhke}, 
 we have, writing
$A=  \partial (B-B_1)$ and $T=R^2$, 
\begin{align*}
 p^B_{t_2-s}(x,y) & \ge  c_1 T^{-d/2}
  \quad \hbox{ for } x, y \in B_1, \, 0\le s \le T, \\
  p_{s}(x,y) &\le  c_2  T^{-d/2}
  \quad \hbox{ for } x, y \in B_1, \, T \le s \le 2T, \\
   p_{t_1-s}(x,y) &\le c_2 T^{-d/2}
  \quad \hbox{ for } x \in B, 
  \, y \in A, \, 0 < s \le t_1. 
\end{align*}
Substituting these bounds in \eqref{bal-cts}, we have 

\begin{align*}
   u(t_2,y_2) &=
 \sum_{y \in B_1} p^B_{t_2}(y_2,y) u(0,y)\theta (y)
+  \sum_{y \in A} \int_0^{t_2}  p^B_{t_2-s}(y_2,y)k(s,y) \theta (y) ds \\
 &\ge 
 \sum_{y \in B_1} c_1 T^{-d/2} u(0,y)\theta (y)
+  \sum_{y \in A} \int_{0}^{t_1} c_1 T^{-d/2} k(s,y) \theta (y)ds  \\
 &\ge 
 \sum_{y \in B_1} c_1 c_2^{-1} p^B_{t_1}(y_1,y) u(0,y)\theta (y)
+  \sum_{y \in A} \int_0^{t_1}c_1 c_2^{-1}  p^B_{t_1-s}(y_1,y)k(s,y) 
 \theta (y)ds \\
 &=  c_1 c_2^{-1} u(t_1,y_1),
\end{align*}
 which proves \eqref{eq:PHI-u}. 
\end{proofsect} 

\section*{Dedication}
\noindent 
Omar Boukhadra wishes to dedicate this paper to the memory of his father Youcef Bey.

\ \newline

\section*{Acknowledgments}
\noindent 
The authors thank an 
anonymous reviewer for the suggestions and comments which helped to improve the paper. 
This research was supported by the french ANR project MEMEMO2 2010 BLAN 0125,
and by the Grant-in-Aid for Scientific Research (A) 25247007, Japan.

\bigskip

\end{document}